\newtheoremstyle{BBstyle0}  {}{}{\itshape}{}{\bfseries}{}{6pt}{}
\newtheoremstyle{BBstyle1}  {3pt}{3pt}{\rmfamily}{}{\itshape}{: }{3pt}{}
\newtheoremstyle{BBstyle2}  {3pt}{3pt}{\itshape}{}{\bfseries\large}{}{0pt}{}
\newtheoremstyle{BBstyle3}  {}{}{\itshape}{}{\bfseries}{: }{3pt}{}
\newtheoremstyle{BBstyle4}  {}{}{\rmfamily}{}{\bfseries}{}{6pt}{}
\newtheorem{thm}{Theorem}
\newtheorem{lem}{Lemma}
\newtheorem{prop}{Proposition}
\newtheorem{df}{Definition}
\newtheorem{cor}{Corollary}
\theoremstyle{definition}
\newtheorem{exa}{Example}
\newcommand{\pa}[1]{\left({#1}\right)}
\newcommand{\norm}[1]{\left\|{#1}\right\|}
\newcommand{\cro}[1]{\left[{#1}\right]}
\newcommand{\ab}[1]{\left|{#1}\right|}
\newcommand{\ac}[1]{\left\{{#1}\right\}}
\newcommand{\Var}{\mathop{\rm Var}\nolimits}
\newcommand{\dfleche}[1]{\,\displaystyle{\mathop{\longrightarrow}_{#1}}\,}
\newcommand{\CV}[1]{\dfleche{#1}}
\newcommand{\E}{{\mathbb{E}}}
\renewcommand{\L}{{\mathbb{L}}}
\newcommand{\N}{{\mathbb{N}}}
\newcommand{\R}{{\mathbb{R}}}
\newcommand{\sS}{{\mathscr{S}}}
\DeclareMathAlphabet{\mathscrbf}{OMS}{mdugm}{b}{n}
\newcommand{\cA}{{\mathcal{A}}}
\newcommand{\cC}{{\mathcal{C}}}
\newcommand{\cE}{{\mathcal{E}}}
\newcommand{\cF}{{\mathcal{F}}}
\newcommand{\cG}{{\mathcal{G}}} 
\newcommand{\cH}{{\mathcal{H}}}
\newcommand{\cI}{{\mathcal{I}}}
\newcommand{\cJ}{{\mathcal{J}}}
\newcommand{\cM}{{\mathcal{M}}}
\newcommand{\cO}{{\mathcal{O}}}
\newcommand{\cS}{{\mathcal{S}}}
\newcommand{\cX}{{\mathcal{X}}}
\newcommand{\cY}{{\mathcal{Y}}} 
\newcommand{\cZ}{{\mathcal{Z}}}
\newcommand{\gv}{{\mathbf{v}}}
\newcommand{\gx}{{\mathbf{x}}}
\newcommand{\gT}{{\mathbf{T}}}
\newcommand{\bs}[1]{\boldsymbol{#1}}
\newcommand{\bsZ}{{\bs{Z}}}
\newcommand{\frc}{{\mathfrak{c}}}
\newcommand{\frm}{{\mathfrak{m}}}
\newlist{lista}{enumerate}{1}
\setlist[lista,1]{label=\alph*),ref=\alph*)}
\newlist{listi}{enumerate}{1}
\setlist[listi,1]{label=(\roman*),ref=(\roman*),align=left}
\newcommand{\eref}[1]{(\ref{#1})}
\renewcommand{\ge}{\geqslant}
\renewcommand{\le}{\leqslant}
\newcommand{\1}{1\hskip-2.6pt{\rm l}}
\newcommand{\scal}[2]{\langle #1,#2\rangle}
\newcommand{\etc}[1]{#1_1,\ldots,#1_n}
\newcommand{\dps}[1]{\displaystyle{#1}}
\newcommand{\et}{^{\star}}
\def\vide{{\varnothing}}
\newcommand{\eps}{{\varepsilon}}
\def\sgn{{\mathrm{\:sgn}}}
\DeclarePairedDelimiter\ceil{\lceil}{\rceil}
\DeclarePairedDelimiter\floor{\lfloor}{\rfloor}
\newcommand{\PES}[1]{\ceil*{#1}}
\newcommand{\PEI}[1]{\floor*{#1}}
\pgfplotsset{compat=1.18}
\begin{document}
\title[]{Estimating a regression function under possible heteroscedastic and heavy-tailed errors. Application to shape-restricted regression. }
\author{Yannick BARAUD} 
\address{University of Luxembourg\\
DMATH\\
Maison du nombre\\
6 avenue de la Fonte\\
L-4364 Esch-sur-Alzette\\
Grand Duchy of Luxembourg.}
\email{yannick.baraud@uni.lu}

\author{Guillaume MAILLARD}
\address{ENSAI\\
Campus de Ker Lann\\
51 Rue Blaise Pascal\\
BP 37203\\
35172 BRUZ Cedex.}
\email{guillaume.maillard@ensai.fr}

\date{\today}
\keywords{Regression, heteroscedasticity, heavy-tailed errors, robustness, shape constraint, VC-dimension, approximation theory.}
\subjclass{62G05, 62G08, 62G35.}
\begin{abstract}
We consider a regression framework where the design points are deterministic and the errors possibly non-i.i.d.\ and heavy-tailed (with a moment of order $p$ in $[1,2]$). Given a class of candidate regression functions, we propose a surrogate for the classical least squares estimator (LSE). For this new estimator, we establish a nonasymptotic risk bound with respect to the absolute loss  which takes the form of an oracle type inequality.  This inequality shows that our estimator possesses natural adaptation properties with respect to some elements of the class. When this class consists of monotone functions or convex functions on an interval, these adaptation properties are similar to those established in the literature for the LSE. However, unlike the LSE, we prove that our estimator remains stable with respect to a possible heteroscedasticity of the errors and may even converge at a parametric rate (up to a logarithmic factor) when the LSE is not even consistent. We illustrate the performance of this new estimator over classes of regression functions that satisfy a shape constraint: piecewise monotone,  piecewise convex/concave,  among other examples. The paper also contains some approximation results by splines with degrees in $\{0,1\}$ and VC bounds for the dimensions of classes of level sets. These results may be  of independent interest. 
 \end{abstract}

\maketitle

\section{Introduction}
In this paper, we consider the problem of estimating a regression function $f\et$ on some set $\cX$ under a fixed design setting with independent and centred real-valued errors $\xi_{1,n},\ldots,\xi_{n,n}$. More precisely, we choose $n$ deterministic points $x_{1,n},\ldots,x_{n,n}$ in $\cX$ (possibly not all different) and associate each of them with a response $Y_{i,n}$ with values in $\R$ of the following form:
\begin{equation}\label{def-fet}
Y_{i,n}=f\et(x_{i,n})+\xi_{i,n}\quad\text{so that}\quad f\et(x_{i,n})=\E\cro{Y_{i,n}}\quad \text{for all $i\in\{1,\ldots,n\}$.}
\end{equation}
We only assume that the $\xi_{i,n}$ are independent (not necessarily i.i.d.) and that all of them have 
a finite moment of order $p\in[1,2]$ with $\E[\xi_{i,n}]=0$. On the basis of these observations, our aim is to estimate the so-called regression function $f\et$ from $\cX$ to $\R$. Our approach is based on {\em a model $\cF$ for $f\et$}, which means that we choose some family $\cF$ of candidate regression functions and pretend that $f\et$ belongs to $\cF$, although this may not be true. To get a good estimator of $f\et$, it actually suffices that $\cF$ provides a good enough approximation of it. For instance, $\cF$ may be a finite dimensional linear space or the set of all nonincreasing functions on $[0,1]$.

Under the fixed design framework, a classical loss $\ell_{s}$ (with $s\ge1$) between two functions
$f,g$ on $\cX$ is given by
\begin{equation}
\ell_{s}(f,g)=\cro{\frac{1}{n}\sum_{i=1}^{n}\ab{f(x_{i,n})-g(x_{i,n)}}^{s}}^{1/s},
\label{eq-ls-loss}
\end{equation}
the case of $s=2$ being the most common. The corresponding risk of an estimator
$\widehat{f}$ of the true regression function $f\et$ is then
\[
\E\cro{\ell_{s}\pa{f\et,\widehat f}}=\E\left[\pa{\frac{1}{n}\sum_{i=1}^{n}
\ab{f\et(x_{i,n)}-\widehat f(x_{i,n})}^{s}}^{1/s}\right].
\]
As this quantity may involve $\E\cro{|\xi_{i,n}|^{s}}$ for $1\le i\le n$, it is commonly assumed in the literature that these moments are finite when one uses the $\ell_{s}$-loss. The value of $p$ being unknown, but not smaller than one, the moment condition is always satisfied with $s=1$ and we shall therefore measure the loss by
$\ell_{1}$. Throughout the paper, this loss is simply denoted by $\ell$ so that for two functions
$f,g$ on $\cX$, 
\[
\ell(f,g)=\frac{1}{n}\sum_{i=1}^{n}\ab{f(x_{i,n})-g(x_{i,n})}.
\]

Our purpose is to bound $\E\cro{\ell\pa{f\et,\widehat f}}$ for a suitable estimator $\widehat{f}$
whose construction will be described in Section~\ref{sect-CE}. As in the papers Baraud and Birg\'e~\citeyearpar{MR3565484} and Baraud, Halconruy and Maillard~\citeyearpar{Baraud:2022aa}, both in density estimation, this construction is based on a model, namely $\cF$,
and a suitable family of tests. Given $\cF$, one can actually define a nondecreasing sequence $\{\cO(D),1\le D\le n\}$ of subsets of $\cF$ with respect to set inclusion, 
for which the risk of our estimator is uniformly bounded by some quantity $R_{D}$ on each subset $\cO(D)$ with $D\in\{1,\ldots,n\}$.  This bound is increasing 
with $D$ so that the smaller $D$ the better the estimation for $f\et\in\cO(D)$. For a general $f\et$
and a given value of $D$, the risk of our estimator is the sum of $R_{D}$ and an approximation
term $3\ell(f\et,\cO(D))$, where, for $\cG\subset\cF$, $\ell(f,\cG)$ denotes the quantity
$\inf_{g\in\cG}\ell(f,g)$, with the convention that it takes the value $+\infty$ when $\cG$ is empty.
As to $R_{D}$, it is proportional to $\sigma_{p}(\gx)$ with $\gx=(x_{1,n},\ldots,x_{n,n})$ provided
that
\begin{equation}\label{eq-momentp}
\sigma_{p}^{p}(\gx)=\frac{1}{n}\sum_{i=1}^{n}\E\cro{\ab{\xi_{i,n}}^{p}}=\frac{1}{n}\sum_{i=1}^{n}\E\cro{\ab{Y_{i,n}-\E\cro{Y_{i,n}}}^{p}}<+\infty,
\end{equation}
while $D$ plays a role which is similar to that of the dimension of the parameter space in Gaussian linear regression. Indeed, for $p=2$ we obtain that
\[
\E\cro{\ell\pa{f\et,\widehat f}}\le3\ell(f\et,\cO(D))+\kappa\sigma_{2}(\gx)\sqrt{\frac{D}{n}},
\]
for some universal positive constant $\kappa$. More generally, for every $D\in\{1,\ldots,n\}$, $p\in[1,2]$
and every regression function $f\et$ on $\cX$,
\begin{equation}
\E\cro{\ell\pa{f\et,\widehat f}}\le 3\ell\pa{f\et,\cO(D)}+ \kappa\cro{\sigma_{p}(\gx)\pa{\frac{D}{n}}^{1-\frac{1}{p}}},
\label{eq-risk-z}
\end{equation}
this bound being also true if $\sigma_{p}(\gx)=+\infty$.
Since it is valid for every $D$ and $p$, the estimator automatically adapts to the optimal values of $D$ and $p$, although neither the values $\sigma_{p}(\gx)$ nor those of $p$ for which these quantities are finite are assumed to be known. The case $p=1$ is the minimal requirement that ensures the existence of the regression function in (\ref{def-fet}). The bound also implies that the estimator is robust in the sense that if $f\et$ does not belong to $\cO(D)$ for a given value of $D$, the extra incurred loss is of the form $3\ell(f\et,\cO(D))$. Moreover $\sigma_{p}(\gx)$
only depends on the mean of the $p$-th absolute moments of the $\xi_{i,n}$, not their maximum. It may therefore be small even if one of these moments is quite large. For instance, an outlier with a large value of $\E\cro{\ab{\xi_{i,n}}^{p}}$ while all the others are small will not perturbate the result too much, which can be viewed as another robustness property of the method.

The problem of estimating a regression function under a fixed design has received a great deal of attention in statistics with a particular emphasis on the least squares method. Actually, we are not aware of many alternatives to the Least Squares Estimator (LSE for short), particularly in a heteroscedastic setting. A notable exception is Efromovich and Pinsker~\citeyearpar{Efromovich1996}. Under the assumption that the errors are Gaussian, the authors designed an estimator which is both adaptive and sharp minimax over classes of Sobolev spaces. 

It is beyond the scope of this paper to provide a complete review of the use of least squares in statistics. As we shall mainly apply our procedure for the purpose of estimating a regression function with a given shape, we restrict our references to papers dealing with applications to such models.

The case of the class $\cF$ of monotone functions on an interval $\cX\subset\R$ has been the most studied in the literature. Zhang~\citeyearpar{Zhang2002}
studied the performance of the LSE on $\cF$ when the errors $\xi_{i,n}$ are independent (but not necessarily i.i.d.) and their variances are not larger than $\sigma^{2}$. He proved that the $\ell_{s}$-risk of the LSE, $s\ge 1$, for estimating a nondecreasing regression function with variation $V$ on $\cX$ is not larger than
\[
C(s)\cro{\pa{\frac{V\sigma^{2}}{n}}^{1/3}+\sigma r_{n}}\quad \text{where}\quad r_{n}=
\begin{cases}
1/\sqrt{n} & \text{for $s\in [1,2)$}\\
\sqrt{(\log n)/n}& \text{for $s=2$}
\end{cases}
\]
and $C(s)$ is a positive constant depending on $s$ only.
When $s=2$, Chatterjee {\em et al}~\citeyearpar{chatterjee2015} proved that the term $(V\sigma^{2}/n)^{1/3}$ cannot be improved in general, at least when the errors are Gaussian and $n$ grows to infinity while $V$ remains fixed. From a nonasymptotic point of view, the second term $\sigma r_{n}$ becomes dominant when $V$ is sufficiently small, that is, when $f\et$ is almost constant. The fact that the $\ell_{2}$-risk of the LSE is not uniform over the class and may depend on the structure of the target regression function $f\et$ is often called {\em adaptation}. This terminology refers here to an adaptation with respect to {\em a shape} rather than {\em a smoothness} as classically studied in the literature. This property was investigated further in Chatterjee {\em et al}~\citeyearpar{chatterjee2015}. For i.i.d.\ Gaussian errors, they proved that the $\ell_{2}$-risk of the LSE is not larger than $\sigma\sqrt{6(k\log n)/n}$ when the regression function is nondecreasing and piecewise constant on a partition of $\cX$ into $k\in\{1,\ldots,n\}$ intervals. 
Besides, they showed that this risk bound remains of the same order for any regression function $f\et$ that lies close enough to an element of this form. This can be viewed as a {\em robustness} property. It is characterised by an oracle-type inequality which is of the same flavour as those established for penalised least squares estimators in the literature (see Birg\'e and Massart~\citeyearpar{MR1848946} and the references therein). Bellec~\citeyearpar{Bellec2018} obtained a sharp oracle-type inequality for the $\ell_{2}$-risk of the isotonic LSE when the errors are i.i.d.\ Gaussian. When the errors are only assumed to be independent with finite variances, an inequality of the same type was established by Guntuboyina and Sen~\citeyearpar{Guntuboyina2018}. They also extended it to $\ell_{s}$-risks with $s\in [1,2)$ under the assumption of normality of the errors. 

The $\ell_{2}$-risk of the LSE has also been studied for the problem of estimating regression functions satisfying other shape constraints than monotonicity. Chatterjee~\citeyearpar{Chatterjee2016} considered the class $\cF$ of convex functions on an interval. When the design points are equispaced and the regression function is affine, the author proves that the $\ell_{2}$-risk of the LSE becomes of order $\sqrt{\log n/n}$. The LSE's adaptation properties with respect to convex piecewise linear regression functions are established in Bellec~\citeyearpar{Bellec2018} where the author obtained again sharp oracle inequalities. 

A common feature of these classes of functions (nondecreasing or convex) lies in their convexity. A more general study of the properties of the LSE on convex sets of functions can be found in Chatterjee~\citeyearpar{MR3269982}. Less is known about the performance of the LSE on non-convex sets. A notable exception is the class of unimodal functions for which Bellec~\citeyearpar{Bellec2018} (see also Chatterjee and Lafferty~\citeyearpar{Chatterjee2019}) established similar oracle-type inequalities with respect to piecewise constant functions as in the isotonic case. Minami~\citeyearpar{Minami2020} considered the larger class of regression functions which are piecewise monotone but the oracle-type inequality established there requires some constraints on the minimal length of the intervals of the partition. Finally, Feng {\em et al}~\citeyearpar{S-shaped2022} studied the global rates and adaptation properties of the LSE for estimating a regression function with an $S$-shape on $[0,1]$, that is, which are first convex and then concave, when the location of the inflexion point is unknown. 

The large amount of literature on the LSE seems to indicate the difficulty of establishing a fairly general result on its performance and the previous review also sketches out some of its limitations. All the results we have mentioned rely on the assumptions that the errors are, at least, square integrable, although the very definition of the regression function only requires their integrability. They are furthermore mostly restricted to the homoscedastic framework or provide risk bounds depending on the maximum of these variances.

The aim of this paper is to propose a surrogate for the celebrated least squares estimator. As we shall see, we establish its performance in a fairly general framework. Our construction relies on robust tests between two candidate regression functions which are, to our knowledge, new. Nevertheless, this approach enjoys some similarities with that previously adopted in Baraud, Halconruy and Maillard~\citeyearpar{Baraud:2022aa} for estimating a density with respect to the total variation distance. 

Our main result, namely the risk bound (\ref{eq-risk-z}), holds under quite weak assumptions which differ substantially from the usual ones in the classical regression framework:
\begin{listi}
\item The random variables $\xi_{i,n}$ have a moment of unknown order $p\in[1,2]$, therefore possibly
no finite variance.
\item The independent variables $\xi_{i,n}$ are not necessarily i.i.d.\ so that the values of the moments
$\E\cro{|\xi|^{p}_{i,n}}$ for $1\le i\le n$ may be quite different.
\item We do not assume that $f\et\in\cF$.
\end{listi}
This versatility allows us to apply our approach to various regression models (including those based on a shape constraint) when the errors are possibly non-i.i.d.\ and strongly heavy tailed (with a moment of order $p\in [1,2)$ only). In particular, our estimator can still be consistent when the errors are heteroscedastic and the variances of a few of them larger than the sample size.  Our examples include classes of regression functions which are piecewise monotone, for example unimodal, or piecewise convex/concave, for example with an $S$-shape. In this latter case, we also establish risk bounds for possibly nonequispaced design points, a problem which, to our knowledge, has not been tackled in the literature. Shape-restricted single index models are also considered. We are not aware of another approach that allows the statistician to obtain an estimator which remains stable in a possibly heteroscedastic context. This phenomenon is due to the fact that its risk bound depends on the average {of} the variances and not the maximum of  {these}. For illustration, we shall see that our estimator may take advantage of this property to converge at rate $(\log n)/\sqrt{n}$ in a situation where {the} LSE is not even consistent. This property also makes it useful for estimation in the Logistic and Poisson regression frameworks which are heteroscedastic by nature. It offers thus an alternative to the $\rho$-estimators that were studied in Baraud and Chen~\citeyearpar{MR4725162} (see also the references therein) in these settings.

The paper is organised as follows. The statistical framework and the construction of the estimator are described in Section~\ref{sect-I} which also contains a comparison {between our estimator and the LSE} on some simple examples. Section~\ref{sect-CEMR} is devoted to the main result while Section~\ref{sect-FATH} {deals with} our applications to the models of regression functions we have {previously described}. The reader can also find in Section~\ref{sect-FATH} some $\ell_{1}$-approximation bounds that can be achieved by using splines. These results can be of independent interest. The technical calculation of some VC-bounds that we use throughout the paper are postponed to Section~\ref{sect-BD}. The proof of our main result can be found in {Section~\ref{sect-pfTh1}} and the other proofs in Section~\ref{Sect-OP}.

Throughout the paper, we shall use the following notation and conventions. The cardinality of a set $A$ is denoted by $|A|$ and we shall say that $A\subset \R$ is a {\em nontrivial interval} if it is an interval with {positive length}. Given a nonnegative number $x\in\R$, $\PES{x}$ is the {smallest integer which is at least as large as $x$} while $\PEI{x}$ is the largest integer not larger than $x$. We write $x\wedge y$ and $x\vee y$ respectively for the minimum and maximum between the two numbers $x$ and $y$ in $\R$ and
\[
\sgn(x)=\1_{x>0}-\1_{x<0}
=
\begin{cases}
1& \text{ if $x>0$},\\
0 &\text{ if $x=0$},\\
-1&\text{ if $x<0$}.\\
\end{cases}
\quad \text{while}\quad \sgn(x,y)=\sgn(x-y).
\]

Finally, we use the conventions $\sum_{\vide}=0$ and $\inf_{\vide}=+\infty$.


\section{The statistical framework and the construction of the estimator}\label{sect-I}

\subsection{The setting and first examples}
As already mentioned in the introduction, our aim is to estimate the regression function $f\et$ defined in \eref{def-fet} from the observation of $\bsZ=\{(x_{1,n},Y_{1,n}),\ldots,(x_{n,n},Y_{n,n})\}$. Given a family $\cF$ of candidate functions on $\cX$, we evaluate the performance of an estimator $\widehat f$ with values in $\cF$ by means of the $\ell_{1}$-loss. We recall that neither the values $\sigma_{p}(\gx)$ nor those of $p$ for which these quantities are finite are assumed to be known. 

 Let us now turn to some examples that illustrate our statistical framework.
\begin{exa}\label{exa0}
The observations $Z_{i}=(x_{i,n},Y_{i,n})$ are drawn from the model
\[
Y_{i,n}=f\et(x_{i,n})+\tau_{i,n}\xi_{i}\quad \text{for $i\in\{1,\ldots,n\}$}
\]
where the $\tau_{i,n}$ are nonnegative numbers and the $\xi_{i}$  i.i.d.\ centred random variables with a $p$-th absolute moment $\E\cro{|\xi_{1}|^{p}}=1$ for some unknown $p\ge 1$. Then, our moment assumption is satisfied with $\sigma_{p}^{p}(\gx)=n^{-1}\sum_{i=1}^{n}\tau_{i,n}^{p}$.
\end{exa}

\begin{exa}\label{exa2}
Given $\beta>0$, let $q_{\beta}$ be the function defined on  $[0,+\infty)$ by
\[
q_{\beta}: u\mapsto \frac{\pa{2\log u}^{\beta-1}}{G(\beta)u^{3}}\1_{[1,+\infty)}(u)
\]
where  $G$ denotes the gamma function, that is $G(\beta)=\int_{0}^{+\infty}t^{\beta-1}e^{-t}dt$. We consider the regression framework
\[
Y_{i,n}=f\et(x_{i,n})+\xi_{i}\quad \text{for $i\in\{1,\ldots,n\}$}
\]
where the  $\xi_{i}$ are i.i.d.\ centred random variables with density $x\mapsto q_{\beta}(|x|)$. In this case, our moment assumption is satisfied for any $p\in[1,2)$ while $\E\cro{|\xi_{1}|^{2}}=+\infty$.
\end{exa}

\begin{exa}\label{exai3}
For each $i\in\{1,\ldots,n\}$, $Y_{i,n}$ is distributed as a Bernoulli random variable with mean $f\et(x_{i,n})\in [0,1]$. Then our assumptions are satisfied with $p=2$ and
\[
\sigma_{2}^{2}(\gx)=\frac{1}{n}\sum_{i=1}^{n}f\et(x_{i,n})\pa{1-f\et(x_{i,n})}.
\]
In particular, when $\cX=[0,1]$, $x_{i,n}=(i-1/2)/n$ for $i\in\{1,\ldots,n\}$ and $f\et$ is continuous on $[0,1]$,
\[
\sigma_{2}^{2}(\gx)=\sigma_{2}^{2}(\gx,n)\CV{n\to +\infty} \sigma_{2}(f\et)=\int_{0}^{1}f\et(x)\pa{1-f\et(x)}dx.
\]
%
%
\end{exa}

\begin{exa}\label{exai4}
For each $i\in\{1,\ldots,n\}$, $Y_{i,n}$ is a Poisson random variable with mean $f\et(x_{i,n})>0$. Then our assumptions are satisfied with $p=2$ and
\[
\sigma_{2}^{2}(\gx)=\frac{1}{n}\sum_{i=1}^{n}f\et(x_{i,n}).
\]
In particular, when $\cX=[0,1]$, $x_{i,n}=(i-1/2)/n$ for $i\in\{1,\ldots,n\}$ and $f\et$ is continuous on $[0,1]$,
\[
\sigma_{2}^{2}(\gx)=\sigma_{2}^{2}(\gx,n)\CV{n\to +\infty} \sigma_{2}(f\et)=\int_{0}^{1}f\et(x)dx.
\]

\end{exa}
\subsection{Construction of the estimator}\label{sect-CE}
Our construction is based on the test statistic $T(\bsZ,f,g)$ which is defined for  $f,g\in \cF$ by
\[
T(\bsZ,f,g)=\sum_{i=1}^{n}\pa{f(x_{i,n})-Y_{i,n}}\sgn(f,g,x_{i,n}),
\]
where $\sgn(f,g,x)=\sgn(f(x),g(x))$ for all $x\in\cX$.
The statistic $T(\bsZ,f,g)$ possesses the following properties.
\begin{lem}\label{lem01}
For every $f,g\in \cF$,
\begin{equation}\label{eq-lem01}
n\cro{\ell(f\et,f)-2\ell(f\et,g)}\le n\cro{\ell(f,g)-\ell(f\et,g)}\le \E\cro{T(\bsZ,f,g)}\le n\ell(f\et,f).
\end{equation}
\end{lem}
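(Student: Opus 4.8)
The plan is to reduce the statement to term-by-term inequalities after computing $\E\cro{T(\bsZ,f,g)}$ in closed form. Abbreviate $f_i=f(x_{i,n})$, $g_i=g(x_{i,n})$, $f\et_i=f\et(x_{i,n})$ and $s_i=\sgn(f,g,x_{i,n})=\sgn(f_i-g_i)$. Since each $s_i$ is a nonrandom quantity, linearity of expectation together with the centering identity $\E\cro{Y_{i,n}}=f\et(x_{i,n})$ from \eref{def-fet} gives
\[
\E\cro{T(\bsZ,f,g)}=\sum_{i=1}^{n}\pa{f_i-\E\cro{Y_{i,n}}}s_i=\sum_{i=1}^{n}\pa{f_i-f\et_i}s_i.
\]
Only integrability is used here, which holds because $p\ge1$; independence of the errors plays no role. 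The rightmost inequality then follows at once from $\ab{s_i}\le1$: each summand is at most $\ab{f_i-f\et_i}$, and summing yields $\E\cro{T(\bsZ,f,g)}\le n\ell(f\et,f)$.

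For the central inequality the one useful ingredient is the elementary identity $a\,\sgn(a)=\ab{a}$, valid for every real $a$ (the case $a=0$ being consistent with the convention $\sgn(0)=0$). Applied to $a=f_i-g_i$ it reads $\ab{f_i-g_i}=(f_i-g_i)s_i$. Subtracting the $i$-th summand of $\E\cro{T(\bsZ,f,g)}$ cancels the $f_i$ contribution and leaves
\[
\ab{f_i-g_i}-(f_i-f\et_i)s_i=(f\et_i-g_i)s_i\le\ab{f\et_i-g_i},
\]
where the last step again uses $\ab{s_i}\le1$. Summing over $i$ gives $n\ell(f,g)-\E\cro{T(\bsZ,f,g)}\le n\ell(f\et,g)$, which is exactly the inequality $n\cro{\ell(f,g)-\ell(f\et,g)}\le\E\cro{T(\bsZ,f,g)}$.

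The leftmost inequality contains no probabilistic content: after rearrangement it is equivalent to $\ell(f\et,f)\le\ell(f,g)+\ell(f\et,g)$, that is, the triangle inequality for the $\ell_{1}$ pseudo-distance $\ell$. I do not anticipate any genuine obstacle; the only place deserving a moment's care is the algebraic cancellation in the central step, together with the verification that ties $f_i=g_i$ (where $s_i=0$) cause no problem, which is guaranteed precisely by taking $\sgn(0)=0$.
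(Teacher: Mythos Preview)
Your proof is correct and follows essentially the same route as the paper: both compute the expectation as $\sum_i (f_i-f\et_i)s_i$, use $\ab{s_i}\le 1$ for the upper bound, and obtain the middle inequality from the identity $(f_i-g_i)s_i=\ab{f_i-g_i}$ together with $(f\et_i-g_i)s_i\le\ab{f\et_i-g_i}$. The only cosmetic difference is that the paper derives the leftmost inequality in the same chain (via $\ab{f_i-g_i}\ge\ab{f_i-f\et_i}-\ab{f\et_i-g_i}$), whereas you isolate it as a separate triangle-inequality step.
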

In particular, if $\ell(f\et,f)$ is larger than $2\ell(f\et,g)$, the expectation of our test statistic $T(\bsZ,f,g)$ is necessarily large while it is small when $\ell(f\et,f)$ is small. 
By taking the supremum with respect to $g\in\cF$ in \eref{eq-lem01}, we obtain that
\[
n\cro{\ell(f\et,f)-2\ell(f\et,\cF)}\le \sup_{g\in\cF}\E\cro{T(\bsZ,f,g)}\le n\ell(f\et,f)\quad \text{for every $f\in\cF$.}
\]
An element $\overline f$ which minimises $\gT:f\mapsto \sup_{g\in\cF}\E\cro{T(\bsZ,f,g)}$ over $\cF$  satisfies thus for every $f\in\cF$,
\begin{align*}
\ell(f\et,\overline f)&\le  n^{-1}\sup_{g\in\cF}\E\cro{T(\bsZ,\overline f,g)}+2\ell(f\et,\cF)\\
&\le n^{-1}\sup_{g\in\cF}\E\cro{T(\bsZ,f,g)}+2\ell(f\et,\cF)\le \ell(f\et,f)+2\ell(f\et,\cF).
\end{align*}
Since $f$ can be chosen arbitrarily in $\cF$, we derive that $\ell(f\et,\overline f)\le 3\ell(f\et,\cF)$. This means that the minimiser $\overline f$ of $\gT$ almost provides a best approximation of $f\et$ in $\cF$. The heuristic that supports our approach lies on the fact that if $T(\bsZ,f,g)$ remains close enough to its expectation uniformly over $f,g\in\cF$, we can substitute $T(\bsZ,f,g)$ for $ \E\cro{T(\bsZ,f,g)}$. By doing so and setting
\begin{equation}\label{eq-defZf}
T(\bsZ,f)=\sup_{g\in \cF}T(\bsZ,f,g)\quad \text{for every $f\in\cF$,}
\end{equation}
we define our estimator $\widehat f$ of $f\et$ as an (approximate) minimiser of $f\mapsto T(\bsZ,f)$ over $\cF$, that is, an element of the (random) set
\begin{equation}\label{eq-defEk}
\cE_{\frc}(\bsZ,\cF)=\ac{f\in\cF,\; T(\bsZ,f)\le \inf_{g\in\cF}T(\bsZ,g)+\frc}
\end{equation}
where $\frc$ denotes some (small enough) nonnegative numerical constant.


\begin{proof}[Proof of Lemma~\ref{lem01}]
Note that
\begin{equation}
\cro{f(x_{i,n})-g(x_{i,n})}\sgn(f,g,x_{i,n})=\ab{f(x_{i,n})-g(x_{i,n})}.
\label{eq-LB1}
\end{equation}
For each  $i\in\{1,\ldots,n\}$,
\begin{align*}
\E\cro{\pa{f(x_{i,n})-Y_{i,n}} \sgn(f,g,x_{i,n})}&=\cro{f(x_{i,n})-f_{i}\et(x_{i,n})} \sgn(f,g,x_{i,n})
\le \ab{f(x_{i,n})-f_{i}\et(x_{i,n})}.\end{align*}
We derive the right-hand side of \eref{eq-lem01} by summing over $i$. Let us now turn to the proof of the left-hand sides.
\begin{align*}
\lefteqn{\E\cro{\pa{f(x_{i,n})-Y_{i,n}} \sgn(f,g,x_{i,n})}}\hspace{15mm}\\
&=\pa{f(x_{i,n})-g(x_{i,n})} \sgn(f,g,x_{i,n})-\pa{f_{i}\et(x_{i,n})-g(x_{i,n})} \sgn(f,g,x_{i,n})\\
&\ge \ab{f(x_{i,n})-g(x_{i,n})}-\ab{f_{i}\et(x_{i,n})-g(x_{i,n})}\\&\ge \ab{f(x_{i,n})-f_{i}\et(x_{i,n})}-2\ab{f_{i}\et(x_{i,n})-g(x_{i,n})},
\end{align*}
Where we used (\ref{eq-LB1}). We conclude by summing over $i$.
\end{proof}


We end this section with a simple illustrative example.
\begin{exa}\label{exa1}
Let $\cX$ be a bounded interval of $\R$, $\cI$ a partition of $\cX$ into $D$ intervals which contain at least one design point $x_{i,n}$ and $\cF$ the class of functions which are constant on each interval $I\in\cI$. We denote  by $f_{I}$ the value of a function $f\in\cF$ on $I\in\cI$, $N(I)$ the number of $x_{i,n}$ that fall in $I$, $Y_{I}$ the average of the $Y_{i,n}$ for {$x_{i,n}\in I$}, that is, $Y_{I}={\sum_{i=1}^{n}\1_{I}(x_{i,n})}Y_{i,n}/N(I)$. 
With this notation, for $f,g\in\cF$
\begin{align*}
T(\bsZ,f,g)&=\sum_{i=1}^{n}\pa{f(x_{i,n})-Y_{i,n}}\sgn(f,g,x_{i,n})\\
&=\sum_{I\in \cI}
\cro{{\sum_{i=1}^{n}\1_{I}(x_{i,n})}\pa{f(x_{i,n})-Y_{i,n}}\sgn(f,g,x_{i,n})}\\
&=\sum_{I\in \cI}N(I)\pa{f_{I}-Y_{I}}\sgn(f_{I},g_{I})\le \sum_{I\in \cI}N(I)\ab{f_{I}-Y_{I}}.
\end{align*}
Since equality holds for $g=\widehat f=\sum_{I\in\cI}Y_{I}\1_{I}$,
\[
T(\bsZ,f)=\sup_{g\in\cF}T(\bsZ,f,g)=\sum_{I\in \cI}N(I)\ab{f_{I}-Y_{I}},
\]
and the minimum of $f\mapsto T(\bsZ,f)$ is achieved for $f=\widehat f$. Our estimation procedure leads thus to the classical regressogram based on the partition $\cI$. In particular, our estimator is different from the minimiser over $\cF$ of the least absolute values
\[
f\mapsto \sum_{i=1}^{n}\ab{Y_{i,n}-f(x_{i,n})}=\sum_{I\in\cI}\,{\sum_{i=1}^{n}\1_{I}(x_{i,n})}\ab{Y_{i,n}-f_{I}}
\]
which returns the medians of the values {$\{Y_{i,n}\text{ for }x_{i,n}\in I\}$} on the intervals $I\in\cI$ and not their means.
\end{exa}
%


\subsection{Our estimator versus the LSE  on an example}\label{sect-LSvO}
Let us consider the regression framework
\begin{equation}\label{eq-weVersLS}
Y_{i,n}=a\et f_{0}\pa{x_{i,n}}+\xi_{i,n}\quad \text{for $i\in\{1,\ldots,n\}$}
\end{equation}
where $x_{i,n}=i/n$ for $i\in\{1,\ldots,n\}$, $f_{0}$ is a known function in $\L_{1}((0,1],dx)\cap \L_{2}((0,1],dx)$ 
and $\xi_{1,n},\ldots,\xi_{n,n}$ independent centred random variables. We shall assume that 
\[
0<\frac{1}{n}\sum_{i=1}^{n}|f_{0}(x_{i})|\CV{n\to +\infty}\norm{f_{0}}_{1}\quad \text{and}\quad \frac{1}{n}\sum_{i=1}^{n}f_{0}^{2}(x_{i})\CV{n\to +\infty}\norm{f_{0}}_{2}^{2},
\]
where $\norm{\cdot}_{1}$ and $\norm{\cdot}_{2}$ denote the $\L_{1}((0,1],dx)$ and $\L_{2}((0,1],dx)$ norms respectively. Our aim is to estimate the unknown parameter $a\et$.
The LSE on the one-dimensional linear space $\cF=\{af_{0},\; a\in\R\}$ is
\begin{equation}\label{eq-LS}
\widetilde a_{n}=\frac{\sum_{i=1}^{n}Y_{i,n}f_{0}(x_{i,n})}{\sum_{i=1}^{n}f_{0}^{2}(x_{i,n})}=a\et+\frac{\sum_{i=1}^{n}\xi_{i,n}f_{0}(x_{i,n})}{\sum_{i=1}^{n}f_{0}^{2}(x_{i,n})}
\end{equation}
while we shall prove below that our estimator is given by
\begin{equation}\label{eq-OurEst}
\widehat a_{n}=\frac{\sum_{i=1}^{n}Y_{i,n}\sgn(f_{0}(x_{i,n}))}{\sum_{i=1}^{n}\ab{f_{0}(x_{i,n})}}=a\et+\frac{\sum_{i=1}^{n}\xi_{i,n}\sgn(f_{0}(x_{i,n}))}{\sum_{i=1}^{n}\ab{f_{0}(x_{i,n})}}.
\end{equation}
In the ideal situation where the $\xi_{i,n}$ are i.i.d.\ centred with a second moment, say equal to 1, the quadratic risks $ R_{n}^{2}(\widetilde  a_{n}), R_{n}^{2}(\widehat a_{n})$ of $\widetilde a_{n}$ and $\widehat a_{n}$ satisfy
\[
nR_{n}^{2}(\widetilde  a_{n})=\pa{\frac{1}{n}\sum_{i=1}^{n}f_{0}^{2}(x_{i,n})}^{-1}\CV{n\to +\infty}\norm{f_{0}}_{2}^{-2}\in (0,+\infty)
\]
and
\[
nR_{n}^{2}(\widehat a_{n})=\pa{\frac{1}{n}\sum_{i=1}^{n}|f_{0}(x_{i,n})|}^{-2}\CV{n\to +\infty}\norm{f_{0}}_{1}^{-2}\in (0,+\infty).
\]
Both estimators converge thus at rate $1/\sqrt{n}$ and it follows from the Cauchy-Schwarz inequality
that the quadratic risk of $\widetilde  a_{n}$ is smaller than that of $\widehat a_{n}$.

Nevertheless, the situation may be quite different when the errors are possibly heteroscedastic. To illustrate this fact, let us assume that $f_{0}(x)=(\sqrt{x}|\log (x/e)|)^{-1}$ for $x\in (0,1]$ and  $\xi_{i,n}$ is a Gaussian random variable with variance $\sigma_{i,n}^{2}=0$ when $i\in\{2,\ldots,n\}$ while $\sigma_{1,n}^{2}=n\log^{2}(en)$.  This means that if we exclude $Y_{1,n}$, the signal is observed with no noise!  The LSE $\widetilde a_{n}$ is then distributed as a Gaussian random variable centred at $a\et$ with variance
\begin{align*}
\frac{\sigma_{1,n}^{2}f_{0}^{2}(x_{1,n})}{\cro{\sum_{i=1}^{n}f_{0}^{2}(x_{i,n})}^{2}}=\pa{\frac{1}{n}\sum_{i=1}^{n}f_{0}^{2}(x_{i,n})}^{-2}\CV{n\to +\infty}\pa{\int_{0}^{1}\frac{dx}{x\log^{2}(x/e)}}^{-2}=1.
\end{align*}
It  is therefore inconsistent while the variance of our estimator $\widehat a_{n}$, which is also Gaussian with mean $a\et$, is
\begin{align*}
\frac{\sigma_{1,n}^{2}}{n^{2}}\pa{\frac{1}{n}\sum_{i=1}^{n}f_{0}(x_{i,n})}^{-2}\quad \text{with}\quad \frac{1}{n}\sum_{i=1}^{n}f_{0}(x_{i,n})\CV{n\to +\infty}\int_{0}^{1}\frac{dx}{|\log(x/e)|\sqrt{x}}\approx 0.92.
\end{align*}
The estimator $\widetilde a_{n}$ converges thus at the rate $\sigma_{1,n}/n=(\log n)/\sqrt{n}$. Denoting by $c_{0}$ the first absolute moment of a standard Gaussian variable, we deduce that the risks of the estimators $\widetilde f=\widetilde a f_{0}$ and $\widehat f=\widehat a f_{0}$ are respectively given by
\begin{equation}\label{eq-LSvO01}
\E\cro{\ell\pa{a\et f_{0}, \widetilde f}}=\frac{c_{0}\sum_{i=1}^{n}f_{0}(x_{i,n})}{\sum_{i=1}^{n}f_{0}^{2}(x_{i,n})}\CV{n\to +\infty}c_{0}\norm{f_{0}}_{1}\quad \text{and}\quad \E\cro{\ell\pa{a\et f_{0}, \widehat f}}= \frac{c_{0}\log n}{\sqrt{n}}.
\end{equation}
Note that our estimator would remain consistent as long as  $\sigma_{1,n}^{2}=o(n^{2})$. This example illustrates the fact that the LSE may perform poorly when the errors are heteroscedastic, even in a Gaussian setting. Its risk may crucially depend on the largest value of the variances while the risk of our estimator only depends on their average which may be much smaller. This property is actually not specific to this example and will be established in greater generality in our main result.

Let us now prove~\eref{eq-OurEst}.
\begin{proof}
Let $\cF$ be the family of functions $\{af_{0},\; a\in\R\}$. Note that for every $a,b,c\in\R$, $\sgn(ac,bc)=\sgn(c)\sgn(a,b)$, hence for every $x\in (0,1]$
\[
\sgn(af_{0},bf_{0},x)=\sgn(af_{0}(x),bf_{0}(x))=\sgn(f_{0}(x))\sgn(a,b).
\]
We deduce that
\begin{align*}
T(\bsZ,af_{0},bf_{0})=&\sum_{i=1}^{n}\pa{af_{0}(x_{i,n})-Y_{i,n}}\sgn(af_{0},bf_{0},x_{i,n})\\
&=\sgn(a,b)\sum_{i=1}^{n}\pa{a|f_{0}(x_{i,n})|-Y_{i,n}\sgn(f_{0}(x_{i,n})}
\end{align*}
hence
\begin{align*}
\sup_{b\in\R}T(\bsZ,af_{0},bf_{0})&=\ab{\sum_{i=1}^{n}\pa{a|f_{0}(x_{i,n})|-Y_{i,n}\sgn(f_{0}(x_{i,n})}}\\
&=\ab{a\sum_{i=1}^{n}|f_{0}(x_{i,n})|-\sum_{i=1}^{n}Y_{i,n}\sgn(f_{0}(x_{i,n}))}.
\end{align*}
The minimum is achieved for $a$ given by \eref{eq-OurEst}.
\end{proof}

\section{Extremal elements and main results}\label{sect-CEMR}
\subsection{Extremal elements of $\cF$}
The elements $\overline f$ of $\cF$ which satisfy the following property play a central role in
our main results.
\begin{df}\label{def-extremal}
Let $\cG$ be a class of functions on a set $\cY$. We say that $\overline g\in\cG$ is extremal in
$\cG$ with degree not larger than $D\ge 1$ if the VC-dimensions of the classes of subsets of $\cY$
\[
\cA_{>}(\overline{g}) = \left\{ \{ g >\overline{g}\},\;  g\in \cG\right\}\quad \text{and}\quad \cA_{<}(\overline{g}) = \left\{ \{ g< \overline{g}\},\;  g \in \cG\right\}
\]
are not larger than $D$. The degree of an extremal element $\overline g\in\cG$ is the smallest
integer $D$ for which the above property is satisfied.
\end{df}
We recall that the VC-dimension of a class $\cA$ of subsets of $\cY$ is not larger than $D$ if
$\ab{\{B\cap A,\; A\in\cA\}}<2^{|B|}$ for every finite subset $B$ of $\cY$ with cardinality larger than $D$. In other words, there exists at least one subset of $B$ which is not of the form
$B\cap A$ with $A\in\cA$. The VC-dimension of $\cA$ is the smallest integer $D$ for which
this property is satisfied.

Going back to our regression framework with the class of functions $\cF$ on $\cX$, we denote by $\pi_{n}f$ the restriction of a function $f\in\cF$ to the subset $\cX_{n}=\{x_{1,n},\ldots,x_{n,n}\}\subset \cX$. The set $\cF_{n}=\{\pi_{n}f,\; f\in\cF\}$ is therefore a class of functions on the finite set $\cX_{n}$. For an integer $D\ge 1$, we define $\cO(D)$ as the subset of $\cF$ which gathers these  elements $\overline f\in\cF$ whose restrictions $\pi_{n}\overline f$ are extremal with degree not larger than $D$ in $\cF_{n}$. Let us now state some useful properties that we shall repeatedly use throughout this paper.
\begin{listi}
\item Since $|\cX_{n}|\le n$ (possibly smaller in case of ex-aequos), the
VC-dimension of a class of subsets of $\cX_{n}$ cannot be larger than $n$. As a consequence,
every element of $\cF_{n}$ is extremal with a degree which is not larger than $n$, 
\begin{equation}\label{eq-biais1}
\cF=\cO(n)\quad \text{and}\quad \ell(f,\cO(n))=0\quad \text{for every $f\in\cF$.}
\end{equation}
This also means that we may  restrict the values of $D$ to the set $\{1,\ldots,n\}$. 

\item It is clear that for $\overline{f}\in\cF$, 
\[
\ab{\left\{ \{f>\pi_{n}\overline{f}\},\;f \in\cF_{n}\right\}}\le\ab{ \left\{ \{f>\overline{f}\},\;f \in\cF\right\}}.
\]
This implies that the degree of $\pi_{n}\overline f$ in $\cF_{n}$ cannot be larger than that of $\overline{f}$ in $\cF$. The set $\cO(D)$ contains thus these functions which are extremal in $\cF$ with degree not larger than $D$. 
\item If  $\overline f$ is an element of $\cF$ for which $|\cA_{>}(\overline{f})|\vee |\cA_{<}(\overline{f})|\le 2^{D}$, then $\overline f$ is extremal in $\cF$ with degree not larger than $D$, hence belongs to $\cO(D)$. 
\end{listi}
In Example~\ref{exa1}, the sets $\{f>\overline f\}$ and $\{f<\overline f\}$ are unions of intervals of $\cI$. Since the  cardinality of $\cI$ is $D$, there exist no more than $2^{D}$ of such sets.  Every element $\overline f\in\cF$ is therefore extremal in $\cF$ with degree not larger than $D$.

More generally, the following result holds.
\begin{prop}\label{prop-lin}
If $\cF$ is a linear space on $\cX$ with finite dimension $d\ge 1$, all the elements of $\cF$ are extremal with degree not larger than $d+1$. In particular, $\cO(d+1)=\cF$. 
\end{prop}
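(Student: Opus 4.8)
The plan is to fix an arbitrary element $\overline f\in\cF$ and bound directly the VC-dimensions of $\cA_{>}(\overline f)$ and $\cA_{<}(\overline f)$, the key point being that the linear structure of $\cF$ collapses both of these classes into a single class of \emph{positivity sets}. Indeed, since $\cF$ is a vector space containing $\overline f$, the map $g\mapsto g-\overline f$ is a bijection of $\cF$ onto itself; writing $h=g-\overline f\in\cF$ gives $\{g>\overline f\}=\{h>0\}$, so that $\cA_{>}(\overline f)=\{\{h>0\},\;h\in\cF\}$. Likewise $\{g<\overline f\}=\{h<0\}=\{-h>0\}$, and since $\cF$ is stable under $h\mapsto-h$, the reindexing $h'=-h$ yields $\cA_{<}(\overline f)=\{\{h>0\},\;h\in\cF\}$ as well. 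Hence it suffices to prove that the class $\cC=\{\{h>0\},\;h\in\cF\}$ of subsets of $\cX$ has VC-dimension at most $d$ (which is even slightly stronger than the claimed bound $d+1$).

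For this VC bound I would use a linear-algebra (duality) argument. Fix any set $B=\{x_{1},\ldots,x_{d+1}\}\subset\cX$ of $d+1$ points and consider the evaluation map $E\colon\cF\to\R^{d+1}$, $E(h)=(h(x_{1}),\ldots,h(x_{d+1}))$. Since $\dim\cF=d<d+1$, the image $E(\cF)$ is a proper subspace of $\R^{d+1}$, so there is a nonzero vector $c=(c_{1},\ldots,c_{d+1})$ with $\sum_{i=1}^{d+1}c_{i}h(x_{i})=0$ for every $h\in\cF$. Replacing $c$ by $-c$ if necessary, we may assume that $P=\{i:\,c_{i}>0\}$ is nonempty. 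If $B$ were shattered by $\cC$, there would exist $h\in\cF$ realizing the trace $P$, that is $h(x_{i})>0$ for $i\in P$ and $h(x_{i})\le0$ for $i\notin P$; but then each term $c_{i}h(x_{i})$ with $i\in P$ is $>0$, each term with $i\notin P$ is $\ge0$, and since $P\neq\vide$ we get $\sum_{i}c_{i}h(x_{i})>0$, contradicting the orthogonality relation. Thus no $(d+1)$-point set is shattered, so $\cC$ has VC-dimension at most $d$, and every $\overline f\in\cF$ is extremal in $\cF$ with degree at most $d$, a fortiori at most $d+1$.

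Finally, to obtain $\cO(d+1)=\cF$, I would combine this with the facts already recorded in the excerpt. The inclusion $\cO(d+1)\subset\cF$ holds by the very definition of $\cO(\cdot)$. Conversely, by property (ii) the degree of $\pi_{n}\overline f$ in $\cF_{n}$ is at most the degree of $\overline f$ in $\cF$, which we just bounded by $d+1$; hence $\pi_{n}\overline f$ is extremal with degree at most $d+1$ in $\cF_{n}$ for every $\overline f\in\cF$, i.e.\ $\cF\subset\cO(d+1)$. The only delicate point of the whole argument is the sign bookkeeping in the duality step—keeping the inequalities strict on $P$, correctly treating the indices with $c_{i}=0$, and orienting $c$ so that the unrealizable trace is exactly the positive-coefficient set; the rest is routine.
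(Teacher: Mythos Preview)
Your proof is correct and in fact yields the slightly sharper bound $d$ rather than $d+1$. The argument is clean: the reduction of both $\cA_{>}(\overline f)$ and $\cA_{<}(\overline f)$ to the single class $\cC=\{\{h>0\},\;h\in\cF\}$ via the translation $h=g-\overline f$ is exactly right, and the duality step (finding a nonzero $c\in E(\cF)^{\perp}$ and showing that the trace $P=\{i:c_{i}>0\}$ cannot be realized) is the standard Radon--Steele type argument, carried out carefully.

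The paper takes a different and shorter route: it simply quotes the classical fact that a $d$-dimensional linear space of functions is VC-subgraph with index at most $d+1$ (van der Vaart and Wellner, Lemma~2.6.15), which immediately bounds the VC-dimension of $\{\{f>t\},\;f\in\cF,\,t\in\R\}$ by $d+1$; the classes $\cA_{>}(\overline f)$ and $\cA_{<}(\overline f)$ are then subclasses of this (after the same translation you use), hence inherit the bound. So the paper's proof is essentially a one-line citation, while yours is self-contained and more elementary, recovering the core of that lemma from scratch and even improving the constant by one. Either way the conclusion $\cO(d+1)=\cF$ follows, and your justification of this via property~(ii) is fine.
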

\begin{proof}
This proposition is the consequence of the fact that $\cF$ is VC-subgraph with dimension at most $d+1$ (see van der Vaart and Wellner~\citeyearpar{MR1385671} Lemma~2.6.15). This property implies that the class of sets $\{\{f>t\},\; f\in\cF, t\in\R\}$ are VC with dimension not larger than $d+1$. Hence, so are the subsets $\cA_{<}(\overline f)$ and $\cA_{>}(\overline f)$ for every $\overline f\in\cF$.
\end{proof}

\subsection{The main result}
Our estimator possesses the following property.
\begin{thm}\label{thm-main1}
Assume that the random variables $Y_{1,n},\ldots,Y_{n,n}$ are independent and let $\widehat f$ be an element of $\cE_{\frc}(\bsZ,\cF)$ with $\frc\ge 0$. Then, for every $\overline f\in\cO(D)$ with $D\in\{1,\ldots,n\}$,
\begin{equation}\label{eq-main00}
\E\cro{\ell\pa{\widehat f,\overline f}}\le 2\ell\pa{f\et,\overline f}+ \kappa\inf_{p\in [1,2]}\cro{\sigma_{p}(\gx)\pa{\frac{D}{n}}^{1-\frac{1}{p}}}+\frac{\frc}{n}
\end{equation}
where $\kappa$ denotes a positive numerical constant. In particular,
\begin{align}
\E\cro{\ell\pa{f\et,\widehat f}}\le  \inf_{D\in \{1,\ldots,n\}}\cro{3\ell\pa{f\et,\cO(D)}+ \kappa\inf_{p\in [1,2]}\ac{\sigma_{p}(\gx)\pa{\frac{D}{n}}^{1-\frac{1}{p}}}}+\frac{\frc}{n}\label{eq-main01}
\end{align}
and when $Y_{1,n},\ldots Y_{n,n}$ are square integrable,
\begin{align}
\E\cro{\ell\pa{f\et,\widehat f}}\le  \inf_{D\in\{1,\ldots,n\}}\cro{3\ell\pa{f\et,\cO(D)}+ \kappa\sqrt{\pa{\frac{1}{n}\sum_{i=1}^{n}\Var(Y_{i,n})}\frac{D}{n}}}+\frac{\frc}{n}.\label{eq-main020}
\end{align}
\end{thm}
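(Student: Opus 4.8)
The plan is to control the estimator through the fluctuations of $T(\bsZ,\cdot,\cdot)$ around its expectation, which by \eref{def-fet} are exactly the centred noise sums, and then to bound the relevant supremum of these fluctuations by an empirical‑process argument driven by the VC‑dimension $D$.

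\textbf{Step 1 (reduction to a noise supremum).} Fix $\overline f\in\cO(D)$. Since $\widehat f\in\cE_{\frc}(\bsZ,\cF)$ and $\overline f\in\cF$, the definitions \eref{eq-defEk} and \eref{eq-defZf} give, on every sample path,
\[
T(\bsZ,\widehat f,\overline f)\le T(\bsZ,\widehat f)\le T(\bsZ,\overline f)+\frc=\sup_{g\in\cF}T(\bsZ,\overline f,g)+\frc.
\]
Because $f\et(x_{i,n})=\E[Y_{i,n}]$, for any (possibly random) $h,g$ one has the pathwise decomposition $T(\bsZ,h,g)=\sum_i(h(x_{i,n})-f\et(x_{i,n}))\sgn(h,g,x_{i,n})-\sum_i\xi_{i,n}\sgn(h,g,x_{i,n})$. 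The pointwise identity \eref{eq-LB1} applied to $(h,g)=(\widehat f,\overline f)$ bounds the first sum from below by $n[\ell(\widehat f,\overline f)-\ell(f\et,\overline f)]$, exactly as in the left inequality of Lemma~\ref{lem01}; and the right inequality of Lemma~\ref{lem01} bounds $\sum_i(\overline f(x_{i,n})-f\et(x_{i,n}))\sgn(\overline f,g,x_{i,n})$ from above by $n\ell(f\et,\overline f)$. Combining the displays and rearranging yields
\[
n\,\ell(\widehat f,\overline f)\le 2n\,\ell(f\et,\overline f)+\sum_i\xi_{i,n}\sgn(\widehat f,\overline f,x_{i,n})+\sup_{g\in\cF}\Big(-\sum_i\xi_{i,n}\sgn(\overline f,g,x_{i,n})\Big)+\frc.
\]
Since $\sgn(\overline f,g,x_{i,n})=-\sgn(g,\overline f,x_{i,n})$ and $\widehat f\in\cF$, both noise terms are at most $Z:=\sup_{g\in\cF}\big|\sum_i\xi_{i,n}\sgn(g,\overline f,x_{i,n})\big|$, so that $\E[\ell(\widehat f,\overline f)]\le 2\ell(f\et,\overline f)+2\E[Z]/n+\frc/n$.

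\textbf{Step 2 (VC reduction).} Writing $\sgn(g,\overline f,x_{i,n})=\1_{A_g}(i)-\1_{B_g}(i)$ with $A_g=\{i:\,g(x_{i,n})>\overline f(x_{i,n})\}$ and $B_g=\{i:\,g(x_{i,n})<\overline f(x_{i,n})\}$, one gets $Z\le\sup_g|\sum_{i\in A_g}\xi_{i,n}|+\sup_g|\sum_{i\in B_g}\xi_{i,n}|$. By the definition of $\cO(D)$ and Definition~\ref{def-extremal}, the families $\{A_g\}_{g\in\cF}$ and $\{B_g\}_{g\in\cF}$ — the traces on $\cX_n$ of $\cA_{>}(\pi_n\overline f)$ and $\cA_{<}(\pi_n\overline f)$ — are classes of subsets of $\{1,\dots,n\}$ of VC‑dimension at most $D$. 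Hence everything reduces to proving that, for independent centred variables and any class $\cA$ of subsets of $\{1,\dots,n\}$ of VC‑dimension at most $D$,
\[
\E\Big[\sup_{A\in\cA}\big|\sum_{i\in A}\xi_{i,n}\big|\Big]\le C\,D^{1-\frac1p}\Big(\sum_{i=1}^n\E\big[|\xi_{i,n}|^p\big]\Big)^{1/p}.
\]
Since $\big(\sum_i\E|\xi_{i,n}|^p\big)^{1/p}=n^{1/p}\sigma_p(\gx)$, this bound turns $2\E[Z]/n$ into $\kappa\,\sigma_p(\gx)(D/n)^{1-1/p}$.

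\textbf{Step 3 (the process bound — main obstacle).} This is the crux, and the only place where heavy tails bite. Sauer--Shelah bounds the number of distinct sets $A\cap\cX_n$ by $(en/D)^D$, but a crude union bound over these (through von Bahr--Esseen $p$‑th‑moment control of each fixed sign pattern) loses a spurious logarithmic factor; the clean exponent $1-1/p$ must instead come from chaining over the VC class. Concretely, split $\xi_{i,n}=\xi'_{i,n}+\xi''_{i,n}$ into a centred truncation at a level $M$ and its centred complement. For the bounded part $|\xi'_{i,n}|\le 2M$, a Bernstein‑type generic‑chaining bound over the VC class contributes $\lesssim\sqrt{D\,v_M}+DM$ with $v_M=\sum_i\E[\xi_{i,n}^2\1_{|\xi_{i,n}|\le M}]\le M^{2-p}\sum_i\E|\xi_{i,n}|^p$; for the tail part, $\E\sup_A|\sum_{i\in A}\xi''_{i,n}|\le 2\sum_i\E[|\xi_{i,n}|\1_{|\xi_{i,n}|>M}]\le 2M^{1-p}\sum_i\E|\xi_{i,n}|^p$. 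Choosing $M=\big(\sum_i\E|\xi_{i,n}|^p/D\big)^{1/p}$ makes all three contributions equal to $D^{1-1/p}\big(\sum_i\E|\xi_{i,n}|^p\big)^{1/p}$ up to constants, which is the desired bound (trivial when $\sigma_p(\gx)=+\infty$). The delicate point is obtaining the variance‑plus‑range chaining estimate for the bounded part with no logarithmic loss and with the right power of $D$; this is exactly what the VC structure guaranteed by extremality provides.

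\textbf{Step 4 (conclusion).} Step~1 together with the process bound gives \eref{eq-main00} for each fixed $p\in[1,2]$; since the left‑hand side does not involve $p$, the infimum over $p$ follows at once. Inequality \eref{eq-main01} then results from the triangle inequality $\ell(f\et,\widehat f)\le\ell(f\et,\overline f)+\ell(\widehat f,\overline f)$, followed by minimisation over $\overline f\in\cO(D)$ — which produces $3\ell(f\et,\cO(D))$, the noise term being independent of $\overline f$ — and over $D\in\{1,\dots,n\}$. Finally \eref{eq-main020} is the case $p=2$ of \eref{eq-main01}, using $\sigma_2^2(\gx)=\frac1n\sum_i\Var(Y_{i,n})$.
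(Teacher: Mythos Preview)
Your proposal is correct and follows the same overall strategy as the paper: the deterministic reduction in Steps~1--2 reproduces the paper's chain of inequalities (the paper writes it with the centred statistics $\widetilde T_{\pm}$ and specialises to $\widehat f$ only at the end, but the algebra is identical), and Step~4 matches the paper's conclusion verbatim. The empirical-process bound in Step~3 also proceeds by truncation plus VC chaining, as in the paper's Lemma on the noise supremum.

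One caveat on Step~3: the paper does not use a direct Bernstein/generic-chaining bound for the bounded part. Instead it symmetrises with an independent copy $\bs{\xi}'$ (so that the truncation $(\xi_i-\xi_i')\1_{|\xi_i-\xi_i'|\le a}$ is automatically centred by symmetry), then applies the $p=2$ result --- itself obtained by Rademacher symmetrisation and Dudley's entropy integral over the VC class --- to these bounded variables. This yields only the sub-Gaussian contribution $\lesssim\sqrt{D\,v_M}$, which is already enough with your choice of $M$. Your additional ``$+DM$'' term is therefore not needed, which is fortunate: obtaining a clean $DM$ (rather than $DM\log(en/D)$) from a direct $\gamma_1$-type chaining over indicators of a VC class is not immediate, since the $L^\infty$-covering number of such a class equals the number of distinct sets and does not shrink with the radius. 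If you route the bounded part through symmetrisation plus Dudley, your argument goes through without this issue.
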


A straightforward consequence of Theorem~\ref{thm-main1} and  Proposition~\ref{prop-lin} is the following one.
\begin{cor}\label{cor-lin}
The estimator $\widehat f$ built on a linear space $\cF$ with dimension $d\ge 1$ satisfies
\begin{align}
\E\cro{\ell\pa{f\et,\widehat f}}\le 3\ell\pa{f\et,\cF}+ \kappa\inf_{p\in [1,2]}\ac{\sigma_{p}(\gx)\pa{\frac{(d+1)\wedge n}{n}}^{1-\frac{1}{p}}}+\frac{\frc}{n}.\label{eq-main01lin}
\end{align}
\end{cor}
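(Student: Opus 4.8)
The plan is to read the corollary off the oracle inequality \eref{eq-main01} of Theorem~\ref{thm-main1}, once the sets $\cO(D)$ have been identified via Proposition~\ref{prop-lin}, after an appropriate choice of the index $D$. The first step is to locate the smallest admissible $D$ for which $\cO(D)$ already coincides with the whole space $\cF$. Proposition~\ref{prop-lin} guarantees that, for a $d$-dimensional linear space $\cF$, every $\overline f\in\cF$ is extremal in $\cF$ with degree at most $d+1$; by the second property listed just before Proposition~\ref{prop-lin}, the degree of the restriction $\pi_{n}\overline f$ in $\cF_{n}$ cannot exceed that of $\overline f$ in $\cF$, hence is also at most $d+1$. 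On the other hand, the first of those properties (see \eref{eq-biais1}) records that the VC-dimension of any class of subsets of $\cX_{n}$ is bounded by $n$, so $\pi_{n}\overline f$ automatically has degree at most $n$ as well. Combining the two bounds, every $\overline f\in\cF$ has degree at most $(d+1)\wedge n$ in $\cF_{n}$, so that $\cF\subseteq\cO\pa{(d+1)\wedge n}$; since $\cO(D)\subseteq\cF$ by construction, this yields $\cO\pa{(d+1)\wedge n}=\cF$, and in particular $\ell\pa{f\et,\cO\pa{(d+1)\wedge n}}=\ell\pa{f\et,\cF}$.

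Second, I would specialise \eref{eq-main01} to the single index $D=(d+1)\wedge n$. Since $d\ge1$ and $n\ge1$, this value indeed belongs to $\{1,\ldots,n\}$, so it is admissible in the infimum of \eref{eq-main01}. Bounding that infimum from above by the corresponding term and substituting the identity $\ell\pa{f\et,\cO\pa{(d+1)\wedge n}}=\ell\pa{f\et,\cF}$ found above gives exactly
\[
\E\cro{\ell\pa{f\et,\widehat f}}\le 3\ell\pa{f\et,\cF}+\kappa\inf_{p\in[1,2]}\ac{\sigma_{p}(\gx)\pa{\frac{(d+1)\wedge n}{n}}^{1-\frac{1}{p}}}+\frac{\frc}{n},
\]
which is precisely \eref{eq-main01lin}.

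There is no genuinely hard step here; the corollary is indeed ``straightforward'' as announced, being a mere instantiation of the main theorem. The only point requiring care is that $d+1$ may exceed the sample size $n$, in which case $\cO(d+1)$ is not one of the sets indexed by $\{1,\ldots,n\}$ over which the infimum in \eref{eq-main01} runs. The truncation $(d+1)\wedge n$ handles this, and the universal VC bound recalled in \eref{eq-biais1} guarantees that replacing $d+1$ by $(d+1)\wedge n$ costs nothing, since the degree of every restriction $\pi_{n}\overline f$ is in any case at most $n$.
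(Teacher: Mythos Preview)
Your proof is correct and follows essentially the same approach as the paper, which merely states that the corollary is a straightforward consequence of Theorem~\ref{thm-main1} and Proposition~\ref{prop-lin}. You have been appropriately careful in justifying the truncation $(d+1)\wedge n$ via \eref{eq-biais1}, which is the only point deserving comment.
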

In particular, when $\cF$ is a linear space of functions on $\cX$ with dimension $d<n$ and the errors are i.i.d.\ and square integrable with variance $\tau^{2}$, we deduce from~\eref{eq-main01lin} that (for $\frc=0)$
\begin{align}\label{eq-main01lin00}
\E\cro{\ell\pa{f\et,\widehat f}}\le 3\ell\pa{f\et,\cF}+ \kappa\tau\sqrt{\frac{d+1}{n}}
\end{align}
while the LSE satisfies 
\begin{align}\label{eq-main01lin01}
\E\cro{\ell_{2}^{2}\pa{f\et,\widehat f}}\le \ell_{2}^{2}\pa{f\et,\cF}+ \tau^{2}\frac{d}{n}.
\end{align}
Inequality \eref{eq-main01lin00} can be viewed as an analogue of \eref{eq-main01lin01} for the  loss $\ell=\ell_{1}$.


Since we allow the errors $\xi_{i,n}=Y_{i,n}-\E\cro{Y_{i,n}}$ to be possibly non-i.i.d., it is convenient hereafter to (abusively) refer  to a ``homoscedastic'' framework when they have the same $p$-th absolute moment (for a given $p$ which is specified by the context) and to a ``heteroscedastic'' one otherwise, even when $p$ is not equal to $2$. Let us now turn to some comments on Theorem~\ref{thm-main1} assuming that $\frc =0$ for the sake of simplicity.

Let us first note that~\eref{eq-main01} is interesting even when $p=1$. In this case, by using \eref{eq-biais1}, we obtain that 
\begin{equation}\label{eq-jp=1}
\sup_{f\et\in\cF}\E\cro{\ell\pa{f\et,\widehat f}}\le \kappa\sigma_{1}(\gx).
\end{equation}
When $\sigma_{1}(\gx)$ is small, the risk of our estimator is uniformly small over $\cF$ even in a situation where the diameter $\sup_{f,g\in\cF}\ell(f,g)$ of $\cF$ is infinite. 

When the design points $x_{1,n},\ldots,x_{n,n}$ are distinct and the class $\cF$ is rich enough to satisfy the property
\[
\ac{\pa{f(x_{1,n}),\ldots,f(x_{n,n})},\; f\in\cF}=\R^{n},
\]
we obtain that for every regression function $f\et$ on $\cX$ there exists $f\in\cF$ such that $(f\et(x_{1,n}),\ldots,f\et(x_{n,n}))=(f(x_{1,n}),\ldots,f(x_{n,n}))$, hence $\ell(f\et,f)=0$. It follows thus from \eref{eq-biais1} that $\ell(f\et,\cO(n))=0$. This equality is therefore satisfied for every function $f\et$ on $\cX$ and not only for those which belong to $\cF$. The supremum in \eref{eq-jp=1} may then be taken over all functions on $\cX$.

When $\sigma_{p}(\gx)<+\infty$ for some $p\in (1,2]$ and $f\et$ belongs to $\cO(D)$ for a given value of $D$, we derive from \eref{eq-main01} that
the estimator $\widehat f$ satisfies
\begin{equation}\label{eq-main010}
\E\cro{\ell\pa{f\et,\widehat f}}\le \kappa\sigma_{p}(\gx)\pa{\frac{D}{n}}^{1-\frac{1}{p}}.
\end{equation}
The quantity $\sigma_{p}(\gx)=\pa{n^{-1}\sum_{i=1}^{n}\E\cro{\ab{\xi_{i,n}}^{p}}}^{1/p}$ may depend on $n$ and can be small when the $p$-th absolute moments of the errors are small in average. When the errors are i.i.d.\ and $\E\cro{\ab{\xi_{1,n}}^{p}}=\tau^{p}<+\infty$, the right-hand side of \eref{eq-main010} is of order $\tau(D/n)^{1-1/p}$. For $p\in (1,2)$, the rate $n^{-1+1/p}$ cannot be improved in general because of the following result. 
\begin{prop}\label{prop-vitesse}
Let $p\in (1,2)$ and $\etc{Y}$ i.i.d.\ random variables with mean $m$.  
\begin{listi}
\item If $\E\cro{|Y_{1}|^{p}}<+\infty$ then 
\[
n^{1-1/p}\ab{\frac{1}{n}\sum_{i=1}^{n}Y_{i,n}-m}\CV{n\to +\infty} 0\quad a.s.
\]
\item If there exists $c>0$ such that 
\[
\E\cro{\ab{\frac{1}{n}\sum_{i=1}^{n}Y_{i,n}-m}}\le cn^{-(1-1/p)}\quad \text{then for every $q\in (1,p)$}\quad \E\cro{|Y_{1}|^{q}}<+\infty.
\]
\end{listi}
\end{prop}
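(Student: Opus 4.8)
The two assertions are the Marcinkiewicz--Zygmund strong law of large numbers and a converse to it, and I would organise the proof along those lines. Throughout, since the second index plays no role, I treat the $Y_{i,n}$ as an i.i.d.\ sequence $Y_{1},Y_{2},\ldots$ and set $X_{i}=Y_{i}-m$, so that the $X_{i}$ are i.i.d.\ and centred; I write $S_{n}=\sum_{i=1}^{n}X_{i}$. Because $q\ge 1$ and $m$ is a finite constant, $\E\cro{\ab{Y_{1}}^{q}}<+\infty$ is equivalent to $\E\cro{\ab{X_{1}}^{q}}<+\infty$, so I may work with the $X_{i}$ throughout.

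For part (i), note that $n^{1-1/p}\ab{n^{-1}\sum_{i=1}^{n}Y_{i}-m}=n^{-1/p}\ab{S_{n}}$, so the claim is precisely that $n^{-1/p}S_{n}\to 0$ almost surely when $\E\cro{\ab{X_{1}}^{p}}<+\infty$ and $p\in(1,2)$, which is the Marcinkiewicz--Zygmund strong law. If a self-contained argument is wanted, I would run the classical truncation scheme: put $X_{i}'=X_{i}\1_{\ab{X_{i}}\le i^{1/p}}$. The bound $\sum_{i\ge 1}\P(\ab{X_{i}}>i^{1/p})=\sum_{i\ge 1}\P(\ab{X_{1}}^{p}>i)\le\E\cro{\ab{X_{1}}^{p}}<+\infty$ together with Borel--Cantelli shows $X_{i}=X_{i}'$ eventually, so it suffices to treat $S_{n}'=\sum_{i\le n}X_{i}'$. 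Exchanging sum and expectation and using $2/p>1$ gives $\sum_{i\ge 1}i^{-2/p}\Var(X_{i}')\le C\,\E\cro{\ab{X_{1}}^{p}}<+\infty$, whence Kolmogorov's convergence criterion and Kronecker's lemma yield $n^{-1/p}\sum_{i\le n}(X_{i}'-\E\cro{X_{i}'})\to 0$ a.s. The deterministic correction is controlled via $\E\cro{X_{i}'}=-\E\cro{X_{i}\1_{\ab{X_{i}}>i^{1/p}}}$ and a Cesàro-type estimate resting on $t^{p}\P(\ab{X_{1}}>t)\to 0$ (itself a consequence of $\E\cro{\ab{X_{1}}^{p}}<+\infty$), which gives $n^{-1/p}\sum_{i\le n}\E\cro{X_{i}'}\to 0$.

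Part (ii) is the converse and carries the real difficulty. Multiplying the hypothesis by $n$ gives $\E\cro{\ab{S_{n}}}\le c\,n^{1/p}$ for all $n$, and the goal is to convert this growth bound into a tail bound on $X_{1}$. Since the $X_{i}$ need not be symmetric, I would first symmetrise: let $X_{i}'$ be an independent copy of $X_{i}$, set $\bar X_{i}=X_{i}-X_{i}'$ and $\bar S_{n}=\sum_{i\le n}\bar X_{i}$, so the $\bar X_{i}$ are i.i.d., symmetric, and $\E\cro{\ab{\bar S_{n}}}\le 2\,\E\cro{\ab{S_{n}}}\le 2c\,n^{1/p}$. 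The crucial step is to pass from the sum to the individual summands. For symmetric independent variables, Lévy's maximal inequality gives $\P\pa{\max_{k\le n}\ab{\bar S_{k}}>t}\le 2\,\P\pa{\ab{\bar S_{n}}>t}$; combining this with $\max_{i\le n}\ab{\bar X_{i}}\le 2\max_{k\le n}\ab{\bar S_{k}}$ and integrating over $t$ produces $\E\cro{\max_{i\le n}\ab{\bar X_{i}}}\le 4\,\E\cro{\ab{\bar S_{n}}}\le 8c\,n^{1/p}$. On the other hand, writing $G(t)=\P(\ab{\bar X_{1}}>t)$ and taking $t_{n}$ with $G(t_{n})=1/n$, the elementary bound $1-(1-G(t))^{n}\ge 1-e^{-1}$ for $t<t_{n}$ gives $\E\cro{\max_{i\le n}\ab{\bar X_{i}}}\ge(1-e^{-1})\,t_{n}$. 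Comparing yields $t_{n}\le C_{1}n^{1/p}$, i.e.\ $G\pa{C_{1}n^{1/p}}\le 1/n$, and interpolating over $n$ turns this into $\P(\ab{\bar X_{1}}>s)\le C_{2}\,s^{-p}$ for all large $s$. This $s^{-p}$ decay is exactly the borderline rate for which $\E\cro{\ab{\bar X_{1}}^{q}}<+\infty$ holds precisely when $q<p$. I would finish by desymmetrising through the weak symmetrisation inequality $\P(\ab{X_{1}-\mu}>s)\le 2\,\P(\ab{\bar X_{1}}>s)$, with $\mu$ a median of $X_{1}$, which transfers the tail bound to $X_{1}-\mu$ and hence gives $\E\cro{\ab{Y_{1}}^{q}}<+\infty$ for every $q\in(1,p)$.

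The main obstacle is part (ii), and within it the two non-obvious reductions: reducing a statement about the growth of $\E\cro{\ab{S_{n}}}$ to a one-dimensional tail estimate — which is possible only after symmetrisation and Lévy's inequality, the latter being what links $\ab{\bar S_{n}}$ to $\max_{i}\ab{\bar X_{i}}$ — and then correctly extracting the $s^{-p}$ tail from the quantile bound $t_{n}\lesssim n^{1/p}$. I expect the tracking of constants in the maximal inequality and the final interpolation over $n$ to be the only delicate, though routine, book-keeping.
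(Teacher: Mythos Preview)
Your argument is correct. For part~(i) both you and the paper invoke the Marcinkiewicz--Zygmund strong law; the paper simply cites Theorem~2.5.12 in Durrett, while you reproduce the standard truncation proof that underlies it.

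For part~(ii) the routes genuinely differ. The paper's proof is two lines: Markov's inequality converts the hypothesis $\E\cro{\ab{S_{n}}}\le c\,n^{1/p}$ into $\P\pa{\ab{S_{n}}>\eps n^{1/q}}\le (c/\eps)\,n^{1/p-1/q}$, so that $\sum_{n\ge 1}n^{-1}\P\pa{\ab{S_{n}}>\eps n^{1/q}}<+\infty$ since $q<p$; Theorem~1 of Baum and Katz~(1965) then yields $\E\cro{\ab{X_{1}}^{q}}<+\infty$. Your approach is more hands-on: symmetrise, use L\'evy's maximal inequality to control $\E\cro{\max_{i\le n}\ab{\bar X_{i}}}$ by $\E\cro{\ab{\bar S_{n}}}$, compare with the elementary lower bound $(1-e^{-1})t_{n}$ to extract the tail estimate $\P\pa{\ab{\bar X_{1}}>s}\lesssim s^{-p}$, and desymmetrise via the weak symmetrisation inequality. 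In effect you reprove the relevant implication of Baum--Katz from scratch; this is longer but fully self-contained and makes transparent why one only gets moments of order $q<p$ rather than $p$ itself. The only loose end is the definition of $t_{n}$ when $G$ does not hit the value $1/n$ exactly or eventually vanishes, but as you anticipate this is routine to fix with a generalised inverse (and the bounded case is trivial).
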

\begin{proof}
The result in $(i)$ is a consequence of Theorem~2.5.12 in Durrett~\citeyearpar{MR3930614} while $(ii)$ follows from Markov's inequality and Theorem~1 in Baum and Katz~\citeyearpar{MR198524}.
\end{proof}
When the errors are no longer i.i.d.\ but satisfy $\E\cro{\ab{\xi_{i,n}}^{p}}\le \tau^{p}<+\infty$ except may be for a small enough number of them which may satisfy $\E\cro{\ab{\xi_{i,n}}^{p}}\le  n\tau^{p}$, the quantity $\sigma_{p}(\gx)$ remains of order $\tau$ and the risk of our estimator of the same order as in the i.i.d.\ case. This property shows the stability of our estimator to a possible ``heteroscedasticity'' of the errors.

Other examples of heteroscedastic frameworks have already been presented in our Examples~\ref{exai3} and~\ref{exai4}.

A interesting consequence of \eref{eq-main01} lies in the fact that  when $f\et$ does not belong to $\cO(D)$  but is only close enough to an element of this set,  the risk bound in \eref{eq-main01} remains of the same order as that discussed in \eref{eq-main010}. In addition, the infimum over $D$ in \eref{eq-main01} indicates that this risk bound provides the best trade-off  between the approximation term $\ell(f\et,\cO(D))$, which is nonincreasing with respect to $D$, and the quantity $\sigma_{p}(\gx)(D/n)^{1-1/p}$, which is increasing with $D$ when $p>1$. It shows how the estimator adapts to some specific features of the unknown regression function. 

Another interesting consequence of \eref{eq-main01} lies in the infimum with respect to $p\in [1,2]$. Since $\sigma_{p}(\gx)$ and $( D/n)^{1-1/p}$ vary in opposite directions  with $p$, the risk bound automatically provides the best compromise between these two factors. This adaptation property with respect to the amount of integrability of the errors is to our knowledge new. It is illustrated in Example~\ref{exa-6} below.

\begin{exa}[Example~\ref{exa2} continued]\label{exa-6}
In this statistical framework, the $p$-th moment of $|\xi_{1}|$ exists for every $p\in [1,2)$ but  it grows to infinity as $p$ approaches 2. We obtain that
\begin{equation}\label{eq-exa6}
\inf_{p\in [1,2]}\cro{\sigma_{p}(\gx)\pa{\frac{D}{n}}^{1-\frac{1}{p}}}=\sqrt{\frac{D(eL_{n})^{\beta}}{n}}\quad \text{for $n\ge D(e/2)^{\beta}$}
\end{equation}
where $L_{n}$ is the unique solution of the equation $\beta^{-1}\log(n/D)=L-\log L-1$ for $L\ge 2$. When $n/D\ge \exp[\beta(e-2)]$ then
\[
\frac{1}{\beta}\log\pa{\frac{n}{D}}+1\le L_{n}\le \frac{e}{e-1}\cro{\frac{1}{\beta}\log\pa{\frac{n}{D}}+1}.
\]
For large enough values of $n/D$, $L_{n}$ is of order $\log(n/D)$, hence the left-hand side of \eref{eq-exa6} of order $\sqrt{D\log(n/D)/n}$. Up to a logarithmic factor, the risk of the estimator is of the same order as that we would get for errors which are square integrable.

Let us now prove \eref{eq-exa6}. By doing the change of variables $t=(2-p)\log u$, $dt=(2-p)u^{-1}du$, we obtain that for $p\in [1,2)$,
\begin{align*}
\E\cro{|\xi_{1}|^{p}}&=\frac{2^{\beta}}{G(\beta)}\int_{1}^{+\infty}\frac{\log^{\beta-1} u}{u^{3-p}}du=\frac{2^{\beta}}{(2-p)^{\beta}G(\beta)}\int_{0}^{+\infty}t^{\beta-1}e^{-t}dt=\pa{\frac{2}{2-p}}^{\beta}.
\end{align*}
Consequently, $\sigma_{p}(\gx)=[2/(2-p)]^{\beta/p}$. Setting $z=(n/D)^{1/\beta}$ and $L=2/(2-p)\in [2,+\infty)$, hence $p=2(1-1/L)$, we obtain that
\begin{align*}
\sigma_{p}(\gx)\pa{\frac{D}{n}}^{1-1/p}&=z^{-\beta}\pa{\frac{2z}{2-p}}^{\beta/p}=z^{-\beta}\exp\cro{\beta f(L)}\quad \text{with}\quad f(L)=\frac{L\log(Lz)}{2(L-1)}.
\end{align*}
Since $f'(L)=[2(L-1)]^{-1}\pa{1-\log(Lz)/(L-1)}$, $f$ reaches its minimum at the solution $L_{n}$ of the equation $\log z=L-\log L-1$ on $[2,+\infty)$, which exists and is unique when $z\ge (e/2)$, i.e.\ $n\ge D(e/2)^{\beta}$. Since $\log(L_{n}z)=L_{n}-1$ and $L_{n}=\log(eL_{n}z)$ we get
\begin{align*}
\inf_{p\in [1,2)}\cro{\sigma_{p}(\gx)\pa{\frac{D}{n}}^{1-1/p}}&=z^{-\beta}\inf_{L\ge 2}\exp\cro{\beta f(L)}=z^{-\beta}\exp\cro{\beta\frac{L_{n}\log(L_{n}z)}{2(L_{n}-1)}}\\
&=z^{-\beta/2}(eL_{n})^{\beta/2}
\end{align*}
which is \eref{eq-exa6}.
\end{exa}

\section{From approximation theory to risk bounds}\label{sect-FATH}
Throughout this section, we assume that $f\et$ belongs to $\cF$. If it were not the case, the risk bound that we establish here would not inflate by more than the additional term $3\ell(f\et,\cF)$. For the sake of simplicity, we also assume that $\sigma_{p}(\gx)$ can be bounded independently of $\gx$ and $n$ by some positive number $\sigma_{p}$ when $\sigma_{p}(\gx)<+\infty$ and we use the convention $\sigma_{p}=+\infty$ otherwise. It follows from Theorem~\ref{thm-main1} that the risk of the estimator $\widehat f$ built on $\cF$  satisfies $\E\cro{\ell\pa{f\et,\widehat f}}\le B_{n}(f\et)+\frc/n$ where
\begin{equation}\label{eq-riskfchap1}
B_{n}(f\et,\cF)=\inf_{D\ge 1}\ac{3\ell\pa{f\et,\cO(D)}+\kappa\inf_{p\in[1,2]}\cro{\sigma_{p}\pa{\frac{D}{n}}^{1-\frac{1}{p}}}}.
\end{equation}
For some classes $\cF$ of interest, our aim is to specify the quantity $B_{n}(f\et,\cF)$ by describing the subsets $\{\cO(D), D\ge 1\}$ of extremal functions as well as their approximation properties with respect to the elements of $\cF$. Let us now turn to our examples.

\subsection{$k$-piecewise monotone functions}
Hereafter $\cX$ denotes a nontrivial interval of $\R$ and we assume that the points $x_{i,n}$ are labelled in increasing order, that is, $x_{1,n}\le \ldots\le x_{n,n}$. Given a positive integer $k$, we consider the class $\cF=\cM_{k}$ of functions which are called {\em $k$-piecewise monotone} on $\cX$ which means that they are monotone on each element of a partition $\cJ$ of $\cX$ into at most $k$ nontrivial intervals. For $f\in\cM_{k}$, {\em the $\cJ$-variation} of $f$ on $\cX$ is the quantity
\begin{equation}\label{def-variation}
V_{\cJ}(\gx,f)=\pa{\sum_{J\in\cJ}\sqrt{\frac{n_{J}V_{J}(f)}{n}}}^{2}\quad \text{where}\quad V_{J}(f)= \sup_{x\in J}f(x)-\mathop{\vphantom{p}\inf}_{x\in J}f(x)
\end{equation}
and $n_{J}$ is the number of points $x_{i,n}$ that fall in $J\in\cJ$. It follows from Cauchy-Schwarz's inequality that $V_{\cJ}(\gx,f)\le V(f)$ where
\begin{equation}\label{eq-tvar}
V(f)=\sum_{J\in\cJ}V_{J}(f)
\end{equation}
is the {\em (total) variation} of $f$ on $\cX$. This latter quantity is independent of the design points $x_{i,n}$ while the $\cJ$-variation does depend on them. Note that $V_{\cJ}(\gx,f)=0$ if and only if $f$ is constant on each element $J$ of the partition $\cJ$ which contains at least a point $x_{i,n}$.

With this notation, $\cM_{1}$ is the set of functions  which are monotone on $\cX$. The class $\cM_{2}$ contains those which are unimodal. For general values of $k\ge 1$, $\cM_{k}$ contains the functions which are {\em $k$-piecewise constant} on $\cX$, that is, piecewise constant on a partition of $\cX$ into at most $k$ nontrivial intervals. Such functions are sometimes called {\em regressograms} with $k$ bins (or a  spline with degree 0 on $k+1$ knots). The following proposition is a consequence of Theorem~\ref{thm-VC} applied to $\cF=\cM_{k}$ and $r=1$.

\begin{prop}\label{prop-extrMonok}
Let $\overline f$ be a function which is both $k$-piecewise monotone and $K$-piecewise constant on $\cX$ with $K\ge 1$. Then $\overline f$ is extremal in $\cM_{k}$ with degree not larger than $2k+K-1$.
\end{prop}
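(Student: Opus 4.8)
The plan is to bound directly the VC-dimensions of the two classes $\cA_{>}(\overline f)$ and $\cA_{<}(\overline f)$ attached to $\cF=\cM_{k}$. Since passing from $\cF$ to the restricted class $\cF_{n}$ cannot increase the degree (as noted above), it suffices to argue with $g$ and $\overline f$ as functions on $\cX$; and because exchanging the roles of $\{g>\overline f\}$ and $\{g<\overline f\}$ amounts to swapping the two strict inequalities, the two classes are treated by the same argument, so I only discuss $\cA_{>}(\overline f)$. Fix points $y_{1}<\cdots<y_{m}$ in $\cX$ shattered by $\cA_{>}(\overline f)$. The restriction to a finite ordered set of a $k$-piecewise monotone function on an interval is exactly a sequence that is monotone on each of at most $k$ consecutive blocks, and conversely any such sequence extends to an element of $\cM_{k}$. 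The question is therefore purely combinatorial: for which sign patterns $\sigma\in\{+,-\}^{m}$ (with $\sigma_{i}=+$ meaning $u_{i}>\overline f(y_{i})$) does there exist a $k$-piecewise monotone sequence $(u_{i})_{1\le i\le m}$ with $\sgn(u_{i}-\overline f(y_{i}))=\sigma_{i}$?

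The crux of the reduction is a feasibility criterion that I would isolate as a lemma: $\sigma$ is realizable if and only if $\{1,\dots,m\}$ can be split into at most $k$ consecutive blocks carrying \emph{alternating} orientations, each block being feasible for its orientation, where an increasing block forbids $\sigma_{i}=+,\ \sigma_{j}=-$ with $i<j$ and $\overline f(y_{i})\ge\overline f(y_{j})$, and a decreasing block forbids $\sigma_{i}=-,\ \sigma_{j}=+$ with $i<j$ and $\overline f(y_{i})\le\overline f(y_{j})$. The implication "realizable $\Rightarrow$ criterion" is read off the maximal monotone pieces of $u$ (which alternate in orientation): on a nondecreasing piece one cannot have $u_{i}>\overline f(y_{i})$, $u_{j}\le\overline f(y_{j})$, $u_{i}\le u_{j}$ and $\overline f(y_{i})\ge\overline f(y_{j})$ simultaneously, and symmetrically on nonincreasing pieces. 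The converse is constructive: on each block one builds a strictly monotone sequence lying in the half-lines $(\overline f(y_{i}),+\infty)$ or $(-\infty,\overline f(y_{i})]$ prescribed by $\sigma$ (the forbidden configuration is exactly the obstruction to such a greedy monotone choice), and one glues consecutive blocks of opposite orientation through a high peak or a low valley placed between them, which does not create any additional monotone piece.

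It then remains to count, and this is where I expect the real difficulty. One must show that the criterion fails for some pattern as soon as $m\ge 2k+K$, i.e.\ that some $\sigma$ then requires at least $k+1$ feasible blocks. The mechanism is that a sign change occurring \emph{against} the orientation of the current block can be accommodated only when $\overline f$ genuinely varies in the matching direction, and $\overline f$ supplies only $K-1$ such variations (its jumps). The delicate point is to obtain the coefficient $1$ in front of $K$ rather than $2$: the naive estimate, using that $\{g>\overline f\}$ is a union of at most $k+K-1$ intervals and that the class of unions of $s$ intervals has VC-dimension $2s$, only yields the weaker bound $2(k+K-1)$. To save the factor two on the $K$ part one must use that $\overline f$ is \emph{simultaneously} $k$-piecewise monotone and $K$-piecewise constant, so that inside each of its monotone stretches all its jumps are co-directional; a block of $g$ aligned with such a stretch can then cross $\overline f$ in one direction only and cannot manufacture two-sided sign changes out of those jumps. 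Carrying out this bookkeeping, by pairing the at most $k$ orientation segments of $u$ with the at most $K-1$ jumps of $\overline f$, should bound the number of shattered points by $2k+K-1$.

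The same argument applied to $\cA_{<}(\overline f)$ gives the identical bound, so $\overline f$ is extremal in $\cM_{k}$ with degree at most $2k+K-1$ (and its restriction $\pi_{n}\overline f$ then inherits a degree at most as large, so $\overline f\in\cO(2k+K-1)$). Alternatively, the whole computation can be packaged into the general statement Theorem~\ref{thm-VC} instantiated at $\cF=\cM_{k}$ with $r=1$, the role of $K$ being played by the number of constant pieces of the reference $\overline f$.
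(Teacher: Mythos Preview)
Your final sentence is the paper's proof verbatim: the proposition is obtained by instantiating Theorem~\ref{thm-VC} at $r=1$, so that the bound $(r+1)k+r(K-1)$ becomes $2k+K-1$. If you keep only that sentence, your answer matches the paper exactly.

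The direct combinatorial argument that precedes it, however, is not a proof as written. You correctly identify that the naive interval-union bound gives only $2(k+K-1)$, and you correctly diagnose that the saving must come from the interaction between the monotone structure of $g$ and the monotone-plus-piecewise-constant structure of $\overline f$. But the step where the bound $2k+K-1$ is actually established is left as ``carrying out this bookkeeping \ldots\ should bound'': the pairing argument you allude to is neither formulated precisely nor executed, and you explicitly flag this as ``where I expect the real difficulty''. The feasibility criterion you state is plausible and the forward direction is fine, but the constructive converse is only asserted, and the final counting---which is the whole content of the proposition---is absent.

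It is also worth contrasting strategies. Your plan is to characterise \emph{all} realizable sign patterns and then argue that some pattern on $2k+K$ points is unrealizable. The paper (in its proof of Theorem~\ref{thm-VC}) does something much more economical: it writes down \emph{one} explicit alternating pattern on any set of more than $(r+1)k+r(K-1)$ points and shows, via Lemma~\ref{lem1-VC}, that no $f\in\cF_{r,k}$ can produce it. For $r=1$ this reduces to the observation that a monotone piece of $g-\overline f$ cannot have alternating signs on three consecutive points, and a pigeonhole on the partition $\{I\cap J\}$ forces such a triple. That single-pattern approach sidesteps the full feasibility characterisation and delivers the sharp constant directly.
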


The set $\cO(D)$ with $D=2k+K-1$ contains thus the functions which are both $k$-piecewise monotone and $K$-piecewise constant. Denoting by $\cO_{\text{pc}}(K)\subset \cM_{k}$ the set of such functions, we immediately derive from Proposition~\ref{prop-extrMonok} and \eref{eq-riskfchap1} that for every regression function $f\et$ on $\cX$,
\begin{equation}\label{eq-adaptmono}
B_{n}(f\et,\cM_{k})\le \inf_{K\ge 1}\ac{3\ell\pa{f\et,\cO_{\text{pc}}(K)}+\kappa\inf_{p\in[1,2]}\cro{\sigma_{p}\pa{\frac{2k+K-1}{n}}^{1-\frac{1}{p}}}}.
\end{equation}
In particular,
\begin{equation}\label{eq-cor-monok2}
B_{n}(f\et,\cM_{k})\le \kappa\inf_{p\in[1,2]}\cro{\sigma_{p}\pa{\frac{2k+K-1}{n}}^{1-\frac{1}{p}}}\quad \text{for every $f\et\in \cO_{\text{pc}}(K)$.}
\end{equation}
If $f\et$ is a regressogram with at most $k$ bins and the errors $Y_{i,n}-\E\cro{Y_{i,n}}$ admit a $p$-th moment with $p\in (1,2]$, $f\et$ belongs to  $\cO_{\text{pc}}(k)$ and our estimator $\widehat f$ on $\cM_{k}$ converges thus to $f\et$ at a rate which is at least $n^{-(1-1/p)}$.

The following result describes the approximation properties of the elements of $\cO_{\text{pc}}(K)$ with respect to monotone functions on $\cX$ for the loss $\ell$. Its proof is postponed to Section~\ref{sect-pf-prop-approxMono}.
\begin{prop}\label{prop-approxMono}
Let $f\in \cM_{k}$ with $k\ge 1$. For every $\gamma>0$, there exists an element $\overline f$ in $\cO_{\text{pc}}(\PEI{k+\gamma})$ such that
\begin{equation}\label{eq-approxcm}
\frac{1}{n}\sum_{i=1}^{n}\ab{f(x_{i,n})-\overline f(x_{i,n})}\le \frac{V_{\cJ}(\gx,f)}{\gamma}.
\end{equation}
\end{prop}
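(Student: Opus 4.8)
The plan is to build $\overline f$ by approximating $f$ piece by piece on the monotone partition, using a horizontal slicing (level set) construction on each interval, and then to choose how many constant pieces to allot to each interval via an allocation that is optimal for the weighted $\ell$-error.

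First I would fix a partition $\cJ$ of $\cX$ into at most $k$ nontrivial intervals on each of which $f$ is monotone, and set $a_{J}=\sqrt{n_{J}V_{J}(f)/n}$, so that $V_{\cJ}(\gx,f)=S^{2}$ with $S=\sum_{J\in\cJ}a_{J}$. On an interval $J$ on which $f$ is, say, nondecreasing with $V_{J}=V_{J}(f)>0$, I slice the range $[\inf_{J}f,\sup_{J}f]$ into $m_{J}$ bands of equal height $V_{J}/m_{J}$ and let $\overline f$ equal, on the preimage of each band, a fixed representative value of that band. Since $f$ is monotone on $J$, these preimages are subintervals of $J$, so $\overline f$ is a genuine monotone step function on $J$ with at most $m_{J}$ constant pieces, and every design point $x_{i,n}\in J$ satisfies $\ab{f(x_{i,n})-\overline f(x_{i,n})}\le V_{J}/m_{J}$. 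Summing over the $n_{J}$ design points of $J$ and dividing by $n$ bounds the contribution of $J$ to the loss by $n_{J}V_{J}/(nm_{J})=a_{J}^{2}/m_{J}$. Intervals with $V_{J}=0$ or with no design point are handled by a single constant piece and contribute nothing.

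The heart of the argument is the choice of the integers $m_{J}\ge 1$. I would take $m_{J}=\PES{\gamma a_{J}/S}$ on the intervals with $a_{J}>0$ and $m_{J}=1$ elsewhere, and then verify two estimates. For the piece count, $\PES{\gamma a_{J}/S}\le \gamma a_{J}/S+1$ gives $\sum_{J}m_{J}\le \gamma+|\cJ|\le k+\gamma$; since $\sum_{J}m_{J}$ is an integer this forces $\sum_{J}m_{J}\le \PEI{k+\gamma}$, so $\overline f$, being $k$-piecewise monotone (witnessed by $\cJ$) and $\left(\sum_{J}m_{J}\right)$-piecewise constant, lies in $\cO_{\text{pc}}(\PEI{k+\gamma})$. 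For the error, $\PES{\gamma a_{J}/S}\ge \gamma a_{J}/S$ gives $\sum_{J}a_{J}^{2}/m_{J}\le (S/\gamma)\sum_{J}a_{J}=S^{2}/\gamma=V_{\cJ}(\gx,f)/\gamma$, which is exactly \eref{eq-approxcm}.

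The main obstacle is meeting the piece budget and the error bound simultaneously with no spare constant. The allocation $m_{J}\propto a_{J}$ is the one dictated by the Cauchy--Schwarz structure of $V_{\cJ}$, being the minimiser of $\sum_{J}a_{J}^{2}/m_{J}$ under a constraint on $\sum_{J}m_{J}$; the specific rounding $\PES{\gamma a_{J}/S}$ is then what keeps the total at one piece per interval plus a shared surplus of exactly $\gamma$, so that the integer total does not exceed $\PEI{k+\gamma}$. The remaining points are routine but should be stated with care: that $\overline f$ is genuinely monotone on each $J$ and not merely at the design points, that empty or constant intervals cost one piece and zero error, and the symmetric treatment of nonincreasing pieces.
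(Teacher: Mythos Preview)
Your proof is correct and follows essentially the same route as the paper: the allocation $m_{J}=\PES{\gamma a_{J}/S}$ coincides exactly with the paper's $K_{J}=\PES{\gamma\sqrt{n_{J}V_{J}/(nV_{\cJ})}}$, and the two verifications (piece count $\le k+\gamma$, error $\le V_{\cJ}/\gamma$) are identical. The only difference is local: on each $J$ the paper invokes Lemma~\ref{lem-approxM} (partitioning the design points into near-equal groups and averaging), whereas your horizontal slicing of the range gives the same bound $n_{J}V_{J}/(nm_{J})$ more directly via a uniform pointwise bound, so your argument is self-contained and does not need that auxiliary lemma.
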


We deduce the following result.
\begin{cor}\label{cor-monok}
Let $k\ge 1$. Assume that $\sigma_{p}<+\infty$ for some $p\in [1,2]$ and  $f\et$ is an element of $\cM_{k}$ with a finite $\cJ$-variation $V_{\cJ}=V_{\cJ}(\gx,f)$ defined by~\eref{def-variation}. Then, for some numerical constant $C>0$,
\begin{equation}\label{eq-cor-monok}
B_{n}(f\et,\cM_{k})\le C\cro{\sigma_{p}^{\frac{1}{s+1}}\pa{\frac{V_{\cJ}}{n}}^{\frac{s}{s+1}}+\sigma_{p}\pa{\frac{3k-1}{n}}^{s}}\quad \text{with $s=1-\frac{1}{p}\in [0,1/2]$.}
\end{equation}
\end{cor}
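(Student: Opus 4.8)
The plan is to combine the adaptation bound \eref{eq-adaptmono} with the $\ell$-approximation estimate of Proposition~\ref{prop-approxMono}, and then to carry out a one-parameter optimisation. Fix the value of $p\in[1,2]$ furnished by the hypothesis and write $s=1-1/p\in[0,1/2]$. Bounding the infimum over $p$ in \eref{eq-adaptmono} by this single value already gives
\[
B_{n}(f\et,\cM_{k})\le\inf_{K\ge 1}\ac{3\ell\pa{f\et,\cO_{\text{pc}}(K)}+\kappa\sigma_{p}\pa{\frac{2k+K-1}{n}}^{s}}.
\]

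First I would feed Proposition~\ref{prop-approxMono} into this inequality. For a given $\gamma>0$ that proposition produces an element of $\cO_{\text{pc}}(\PEI{k+\gamma})$ approximating $f\et$ to within $V_{\cJ}/\gamma$, so taking $K=\PEI{k+\gamma}$ and using $K\le k+\gamma$ (hence $2k+K-1\le 3k+\gamma-1$) yields, for every $\gamma>0$,
\[
B_{n}(f\et,\cM_{k})\le\frac{3V_{\cJ}}{\gamma}+\kappa\sigma_{p}\pa{\frac{3k+\gamma-1}{n}}^{s}.
\]
Since $s\in[0,1]$ and $3k-1\ge 0$, the map $x\mapsto x^{s}$ is subadditive, so $(3k-1+\gamma)^{s}\le(3k-1)^{s}+\gamma^{s}$; this splits the dimension term into a part depending only on $k$ and a part depending only on $\gamma$, giving
\[
B_{n}(f\et,\cM_{k})\le\kappa\sigma_{p}\pa{\frac{3k-1}{n}}^{s}+\inf_{\gamma>0}\cro{\frac{3V_{\cJ}}{\gamma}+\kappa\sigma_{p}\pa{\frac{\gamma}{n}}^{s}}.
\]

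The last step is the elementary optimisation of $\gamma\mapsto A\gamma^{-1}+B\gamma^{s}$ with $A=3V_{\cJ}$ and $B=\kappa\sigma_{p}n^{-s}$. Balancing the two terms (the stationary point is $\gamma^{\star}=(A/(sB))^{1/(s+1)}$ when $s>0$, while for $s=0$ the $\gamma$-term is constant and the infimum is attained as $\gamma\to+\infty$) gives an infimum of order $A^{s/(s+1)}B^{1/(s+1)}=(\text{const})\,\sigma_{p}^{1/(s+1)}(V_{\cJ}/n)^{s/(s+1)}$, the multiplicative constant $(s+1)s^{-s/(s+1)}$ staying bounded for $s\in[0,1/2]$. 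Substituting this back reproduces exactly the two terms of \eref{eq-cor-monok}. The only points needing a little care are the integer rounding, handled cleanly by $K\le k+\gamma$, and the degenerate case $s=0$, where the optimisation is trivial; in particular no separate treatment of the constraint $2k+K-1\le n$ is required, because the inequality above is valid for every $K\ge1$ (when $2k+K-1>n$ the corresponding term is at least $\kappa\sigma_{p}$, which in turn dominates $B_{n}$ via the choice $D=n$ in \eref{eq-riskfchap1}, using $f\et\in\cF$). I expect the main obstacle to be purely bookkeeping: ensuring that after the balancing all constants remain numerical and uniform over $s\in[0,1/2]$.
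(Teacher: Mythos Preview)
Your proof is correct and follows essentially the same route as the paper: plug Proposition~\ref{prop-approxMono} into \eref{eq-adaptmono}, bound $\PEI{k+\gamma}\le k+\gamma$, split the dimension term via subadditivity of $u\mapsto u^{s}$, and balance $3V_{\cJ}/\gamma$ against $\kappa\sigma_{p}(\gamma/n)^{s}$. The only cosmetic difference is that the paper isolates the degenerate case $V_{\cJ}=0$ (taking $\gamma=1/2$ so that $\PEI{k+\gamma}=k$) rather than $s=0$, and writes down the explicit choice $\gamma=(V_{\cJ}/(\kappa\sigma_{p}))^{1/(s+1)}n^{s/(s+1)}$ instead of discussing the stationary point; your treatment is equally valid.
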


In particular, when the variation $V(f\et)$ of $f\et$ is finite on $\cX$, it follows from the inequality $V_{\cJ}(\gx,f)\le V(f\et)$ that $\widehat f$ converges to $f\et$ at a rate which is at least  $n^{-(p-1)/(2p-1)}$ when $\sigma_{p}<+\infty$.

From a nonasymptotic point of view, it is interesting to notice that when the $\cJ$-variation $V_{\cJ}(\gx,f\et)$ of $f\et$ is small enough, so that the first term in the right-hand side of \eref{eq-cor-monok} becomes sufficiently small compared to the second one, the risk bound that we get is of the same order as that we would get for estimating a regressogram on $k$ bins (see \eref{eq-cor-monok2} with $K=k$).
\begin{proof}[Proof of Corollary~\ref{cor-monok}]
Let $p\in [1,2]$ such that $\sigma_{p}<+\infty$.  It follows from Proposition~\ref{prop-approxMono} and \eref{eq-adaptmono} that for every $\gamma>0$
\begin{align*}
B_{n}(f\et,\cM_{k})&\le \frac{3V_{\cJ}}{\gamma}+\kappa\sigma_{p}\pa{\frac{2k+\PEI{k+\gamma}-1}{n}}^{s}.
\end{align*}
When $V_{\cJ}=0$, we choose $\gamma=1/2$ so that $\PEI{k+\gamma}=k$ and we get that $B_{n}(f\et)\le \kappa\sigma_{p}[(3k-1)/n]^{1-1/p}$. Otherwise, by using the fact that $u\mapsto u^{s}$ is subadditive and the inequality $\PEI{k+\gamma}\le k+\gamma$,  we obtain that
\begin{align*}
B_{n}(f\et,\cM_{k})&\le \frac{3V_{\cJ}}{\gamma}+\kappa\sigma_{p}\cro{\pa{\frac{\gamma}{n}}^{s}
+\pa{\frac{3k-1}{n}}^{s}}.
\end{align*}
The result follows by choosing $\gamma=(V_{\cJ}/(\kappa \sigma_{p}))^{1/(s+1)}n^{s/(s+1)}$.
\end{proof}

%
%
%
%
%
%
%

\subsection{$k$-piecewise monotone-convex-concave functions}
In this section, $\cX$ denotes a nontrivial compact interval of $\R$. Hereafter, we say that a function $f$ is {\em convex-concave} on $[a,b]$, with $a<b$,  if it is either convex or concave on this interval. Given a continuous convex-concave function $f$ on an interval $[a,b]$, we may denote by $f_{r}'(x)$ and $f_{l}'(x)$ the right and left derivatives of $f$ at $x\in [a,b]$ respectively. We allow these quantities to take the values $\pm \infty$. For $f$ being additionally monotone, we define the {\em linear index} of $f$ as the quantity
\[
\Gamma(f)=1-\frac{1}{2}\cro{\frac{\ab{f'_{r}(a)}\wedge \ab{f'_{l}(b)}}{|\Delta|}+\frac{|\Delta|}{\ab{f'_{r}(a)}\vee\ab{f'_{l}(b)}}}\quad \text{with}\quad \Delta=\frac{f(b)-f(a)}{b-a}.
\]
In this formula, we use the convention $0/0=1$ and $1/(+\infty)=0$. The linear index $\Gamma(f)$ belongs to $[0,1]$ and equals 0 if and only if $f$ is affine on $[a,b]$. In contrast, the linear index of the function $=(1-10^{10}x)\1_{[0,10^{-10}]}$, which is nonincreasing and ``strongly'' convex on $[0,1]$, is $\Gamma(f)=1-10^{-10}/2$ and is therefore  close to 1. The linear index measures how far the graph of a continuous, monotone convex-concave function $f$ on an interval $[a,b]$ differs from the chord joining $(a,f(a))$ to $(b,f(b))$.

Given $k\ge 1$, we say that $f$  is {\em $k$-piecewise monotone-convex-concave} on $\cX$ if it is {\em continuous} on $\cX$ and if there exists a partition of $\cX$ into at most $k$ nontrivial intervals on the closure of each of which $f$ is monotone and convex-concave. We denote by $\cC_{k}$ the class of such functions. The set $\cC_{2}$ contains the functions which are continuous and either convex or concave on $\cX$. Those with an $S$-shape (first convex and then concave on $\cX$) belong to $\cC_{4}$. For $k\ge 1$, $\cC_{k}$ contains all the functions which are continuous and piecewise linear on a partition of $\cX$ into at most $k$ intervals. We shall call these functions {\em $k$-piecewise linear on $\cX$}. They are splines of degree at most 1 with $k+1$ knots. 
We start with the following result which derives from Theorem~\ref{thm-VC} applied to $\cF=\cC_{k}$ and $r=2$.
\begin{prop}\label{prop-extrcck}
A function $\overline f$ of $\cC_{k}$ which is $K$-piecewise linear on $\cX$ with $K\ge 1$ is extremal in $\cC_{k}$ with degree not larger than $3k+2(K-1)$.
\end{prop}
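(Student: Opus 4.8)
The plan is to bound directly the VC-dimensions of the two classes $\cA_{>}(\overline f)$ and $\cA_{<}(\overline f)$ of Definition~\ref{def-extremal}, exploiting the fact that subtracting an affine function leaves the sign of the second derivative unchanged. Fix the $K$-piecewise linear element $\overline f$, whose partition into linear pieces I call $\cK$, and take an arbitrary competitor $g\in\cC_{k}$, monotone and convex-concave on the pieces of some partition $\cJ$ with $|\cJ|\le k$. On the common refinement $\cJ\vee\cK$, which has at most $k+K-1$ nontrivial intervals, $g$ is convex or concave while $\overline f$ is affine; hence $h=g-\overline f$ has a second derivative of constant sign on each such interval, i.e.\ $h$ is itself convex or concave there. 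The sets to control are $\{g>\overline f\}=\{h>0\}$ and $\{g<\overline f\}=\{h<0\}$.

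First I would count zero-crossings. A convex (resp.\ concave) function has at most two zeros on an interval, so $h$ vanishes at most $2(k+K-1)$ times on $\cX$; being continuous, $\{h>0\}$ and $\{h<0\}$ are each a union of a bounded number of intervals. To pass from this crossing count to a VC bound I would use the standard observation that shattering a set of $s$ points forces the fully alternating membership pattern, which requires $h$ to change sign at least $s-1$ times, so that a cap on the number of sign changes available to any $g\in\cC_{k}$ caps the size of any shattered set. Carrying out this bookkeeping separately for $\cA_{>}(\overline f)$ and $\cA_{<}(\overline f)$ and taking the larger of the two VC-dimensions yields the extremal degree. This is precisely the instance $r=2$ of the general combination result Theorem~\ref{thm-VC}, whose $r=1$ specialisation (monotone minus constant) gave Proposition~\ref{prop-extrMonok}; the role played there by ``monotone $-$ constant $=$ monotone'' is played here by ``convex-concave $-$ affine $=$ convex-concave''.

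The delicate part is obtaining the precise constant $3k+2(K-1)$ rather than a looser one. Three points need care. (i) The breakpoints of $\cJ$ are \emph{data-dependent}: they move with $g$, so the refinement $\cJ\vee\cK$ and its pieces change from competitor to competitor, and the VC argument must quantify over all admissible placements rather than over a single fixed partition. (ii) One must handle zeros that fall on breakpoints and the transition between a convex and a concave piece of $g$, where the one-sided derivatives (as in the linear index $\Gamma$) govern whether an inflection produces an additional crossing. (iii) The ledgers for $\{h>0\}$ and $\{h<0\}$ are \emph{not} symmetric, since a convex piece can contribute two components to $\{h>0\}$ but only one to $\{h<0\}$ (and conversely for concave pieces), and extremality requires the maximum of the two degrees, so both must be tracked. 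Reconciling these—exactly what the general statement Theorem~\ref{thm-VC} does in one stroke for arbitrary $r$—gives the bound $3k+2(K-1)$, whence $\overline f$ is extremal in $\cC_{k}$ with degree not larger than $3k+2(K-1)$ by Definition~\ref{def-extremal}.
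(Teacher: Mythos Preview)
Your proposal is correct and coincides with the paper's own argument: both simply apply Theorem~\ref{thm-VC} with $r=2$, noting that $\cC_{k}\subset\cF_{2,k}$ (convex-concave is exactly $2$-monotone) and that a $K$-piecewise linear function is a piecewise polynomial of degree at most $r-1=1$, whence the bound $(r+1)k+r(K-1)=3k+2(K-1)$. Your surrounding sign-change discussion is heuristic motivation rather than a standalone proof---the actual proof of Theorem~\ref{thm-VC} proceeds via a specific alternating sign assignment and a pigeonhole argument, not a global zero-crossing count---and you rightly defer to that theorem for the exact constant.
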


The set $\cO(D)$ with $D=3k+2(K-1)$ contains thus these $K$-piecewise linear functions which belong to $\cC_{k}$. Let us denote by $\cO_{\text{pl}}(K)$ the set of such functions. We immediately deduce from Proposition~\ref{prop-extrcck} and \eref{eq-riskfchap1} that for every regression function $f\et$ on $\cX$,
\begin{equation}\label{eq-adaptcon}
B_{n}(f\et,\cC_{k})\le \inf_{K\ge 1}\ac{3\ell\pa{f\et,\cO_{\text{pl}}(K)}+\kappa\inf_{p\in[1,2]}\cro{\sigma_{p}\pa{\frac{3k+2(K-1)}{n}}^{1-\frac{1}{p}}}}.
\end{equation}
In particular, if $f\et$ belongs to $\cO_{\text{pl}}(K)$ and $\sigma_{p}<+\infty$ for some $p\in [1,2]$,
\begin{equation}\label{eq-pcK}
B_{n}(f\et,\cC_{k})\le \kappa\sigma_{p}\pa{\frac{3k+2(K-1)}{n}}^{1-\frac{1}{p}}.
\end{equation}
This inequality applies to every function $f\et$ which is $k$-piecewise linear on $\cX$ as it automatically belongs to $\cC_{k}$. For such an extremal function, the estimator $\widehat f$ converges at a rate which is at least $n^{1/p-1}$ when $p\in (1,2]$.

Given a function $f\in \cC_{k}$ associated with a partition $\cJ$ of the interval $\cX$ , we define the $\cJ$-{\em variation-index} of $f$ as
\begin{equation}\label{def-W}
W_{\cJ}(f)=\cro{\sum_{J\in\cJ}\pa{\frac{n_{J}V_{J}(f)\Gamma_{J}(f)}{n}}^{\frac{1}{3}}}^{3}
\end{equation}
where $n_{J}$ is the number of design points that fall in $J$, $V_{J}(f)$ the variation of $f$ on $J$ and $\Gamma_{J}(f)$ the linear index of $f$ on the closure of $J$. Note that $W_{\cJ}(f)=0$ if and only if $f$ is affine on every interval $J$ that contains at least one design point $x_{i,n}$. Besides, it follows from H\"older's inequality that the $\cJ$-variation-index of $f$ satisfies
%
\begin{equation}\label{ine-W}
W_{\cJ}(f)\le \pa{\sum_{J\in\cJ}\frac{n_{J}}{n}}\pa{\sum_{J\in\cJ}V_{J}(f)}\pa{\sum_{J\in\cJ}\Gamma_{J}(f)}=V(f)\Gamma_{\cJ}(f)
\end{equation}
where $V(f)=\sum_{J\in\cJ}V_{J}(f)$ is the variation of $f$ on $\cX$ and $\Gamma_{\cJ}(f)=\sum_{J\in\cJ}\Gamma_{J}\le |\cJ|\le k$.

The following approximation result is proven in Section~\ref{pf-prop-mcc}.
\begin{prop}\label{prop-approx-mcc}
Let $\cX$ be a compact interval of length $L>0$, $x_{1,n},\ldots,x_{n,n}$ equispaced points  in $\cX$ such that $x_{i+1}-x_{i,n}=L/n$ for $i\in\{1,\ldots,n-1\}$, $f$ an element of $\cC_{k}$ with  variation $V=V(f)$ defined by~\eref{eq-tvar} and $\cJ$-variation-index $W=W_{\cJ}(f)$ defined by~\eref{def-W}. Then, for every $\gamma>0$, there exists an element $\overline f=\overline f(\gamma)$ in $\cO_{\text{pl}}(\PEI{2(k+\gamma)})$  such that
\begin{equation}\label{eq-approx-mcc}
\frac{1}{n}\sum_{i=1}^{n}\ab{f(x_{i,n})-\overline f(x_{i,n})}\le \frac{W}{\gamma^{2}}+\frac{2V}{n}.
\end{equation}
\end{prop}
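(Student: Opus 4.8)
The plan is to reduce the global approximation to a one‑piece problem, solve it on a single monotone convex‑or‑concave piece, and then optimise the budget of linear pieces across the partition. First I would fix a partition $\cJ=\{J_{1},\dots,J_{k'}\}$ with $k'\le k$ witnessing $f\in\cC_{k}$, so that $f$ is continuous and monotone, convex or concave, on each $J_{i}$. On $J_{i}$ I would approximate $f$ by a function $\overline f_{i}$ that is piecewise linear with $m_{i}$ pieces, interpolates $f$ at the two endpoints of $J_{i}$ (so that the pieces glue into a continuous function), and is itself monotone and convex or concave on $J_{i}$, with the same signs as $f$. Since interpolating a monotone convex (resp.\ concave) function at an increasing sequence of knots automatically yields a monotone convex (resp.\ concave) piecewise linear interpolant, the concatenation $\overline f$ is continuous (using that $f$ and hence the matched endpoint values agree across junctions), lies in $\cC_{k}$ (the same partition $\cJ$ witnesses it), and is $(\sum_{i}m_{i})$‑piecewise linear; I will arrange $\sum_{i}m_{i}\le\PEI{2(k+\gamma)}$ so that $\overline f\in\cO_{\text{pl}}(\PEI{2(k+\gamma)})$.

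The core is the single‑piece estimate on $J=J_{i}$. Using $m$ sub‑intervals, I would place interior knots adaptively (equidistributing a curvature functional and inserting a knot at every kink, i.e.\ every atom of the nondecreasing slope $f'$) and bound the sup‑norm error of the chord on a sub‑interval of length $h$ across which the slope increases by $\delta$ by $h\delta/4$, using convexity. Converting to the $n_{J}\approx n|J|/L$ equispaced design points in $J$, the continuous part of the error is $\frac{n}{4L}\sum_{j}h_{j}^{2}\delta_{j}$, which a good knot placement drives down to $\frac{c\,n}{L\,m^{2}}\pa{\int_{J}(f'')^{1/3}}^{3}$, where $f''$ denotes the nonnegative second‑derivative measure. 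The decisive analytic inequality is then $\pa{\int_{J}(f'')^{1/3}}^{3}\le C\,|J|\,V_{J}(f)\,\Gamma_{J}(f)$ for monotone convex‑concave $f$ and a numerical constant $C$; this is exactly where the linear index enters, the extremal case being $f$ quadratic with a nearly flat endpoint slope, which is what makes the definition \eref{def-variation}–\eref{def-W} of $\Gamma$ the right one. Multiplying by $n|J|/L=n_{J}$ turns the bound into $\le c\,n_{J}V_{J}(f)\Gamma_{J}(f)/m^{2}$.

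I would then sum over $i$ and optimise the allocation. Writing $a_{i}=c\,n_{J}V_{J_{i}}(f)\Gamma_{J_{i}}(f)$, the aggregate continuous error is $\frac1n\sum_{i}a_{i}/m_{i}^{2}$, and minimising $\sum_{i}a_{i}/m_{i}^{2}$ under $\sum_{i}m_{i}=M$ gives $\pa{\sum_{i}a_{i}^{1/3}}^{3}/M^{2}$ by the power‑mean (Hölder) inequality, with optimum at $m_{i}\propto a_{i}^{1/3}$. Since $\frac1n\pa{\sum_{i}a_{i}^{1/3}}^{3}=c\,W$ by the very shape of $W_{\cJ}(f)$ in \eref{def-W}, choosing $m_{i}=\lceil\theta a_{i}^{1/3}\rceil$ for a suitable $\theta$ makes this term at most $W/\gamma^{2}$; the ceilings cost at most one extra piece each (at most $k$ in total) while $\theta\sum_{i}a_{i}^{1/3}$ costs order $\gamma$, so the total count stays below $\PEI{2(k+\gamma)}$, the factor $2$ absorbing the rounding and the numerical constants. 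The residual $2V/n$ comes from discretisation: each sub‑interval may contain one design point more than $n|J_{j}|/L$, and the sup‑errors over disjoint sub‑intervals sum, by additivity of the variation of a monotone function, to at most $V(f)$, giving the extra $\le 2V(f)/n$. The degenerate case $W=0$ (where $f$ is already affine on every piece meeting the design) is handled by taking $\overline f=f$ with $k\le\PEI{2(k+\gamma)}$ pieces.

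The main obstacle is the single‑piece estimate, and within it the inequality $\pa{\int_{J}(f'')^{1/3}}^{3}\le C\,|J|\,V_{J}(f)\,\Gamma_{J}(f)$ together with a knot placement attaining the $m^{-2}$ rate uniformly over convex‑concave $f$ whose curvature may concentrate (atoms of $f''$): equidistributing $(f'')^{1/3}$ naively fails exactly when the slope jumps, so the construction must treat kinks separately and control the sub‑intervals where the curvature accumulates, which is the delicate point of the whole argument.
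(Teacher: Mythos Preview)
Your global architecture --- fix the partition $\cJ$, build a piecewise-linear interpolant on each $J\in\cJ$ that agrees with $f$ at the endpoints so the pieces glue continuously, then allocate the knot budget via H\"older with the cube-root weights $a_i^{1/3}$ --- is exactly what the paper does. The $W=0$ case, the summing of the discretisation errors to $2V/n$, and the count $\sum K_J\le 2(k+\gamma)$ are all handled the same way there.

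The gap is precisely where you say it is: the single-piece estimate. Your route through $\bigl(\int_J (f'')^{1/3}\bigr)^3$ is not a proof but a heuristic, for two reasons. First, $\int_J (f'')^{1/3}$ is simply undefined when $f''$ has atoms, and a convex function can have arbitrarily many (even infinitely many) kinks, so ``inserting a knot at every kink'' is not compatible with a finite budget $m$; you note this yourself but do not resolve it. Second, even for smooth $f$, the inequality $\bigl(\int_J (f'')^{1/3}\bigr)^3\le C\,|J|\,V_J(f)\,\Gamma_J(f)$ is asserted without argument; the definition of $\Gamma_J$ is a rather particular algebraic combination of endpoint slopes and the chord slope, and no direct link to $\int (f'')^{1/3}$ is apparent. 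So as it stands the single-piece bound is missing both the construction and the key estimate.

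The paper does \emph{not} go through $\int (f'')^{1/3}$ at all. Its single-piece result (Theorem~\ref{thm-approx-con}) proceeds differently: for a parameter $\delta\in[\underline\delta,\overline\delta]$ it splits $[a,b]$ at the point $c$ where $f'_l(c)\le\delta\le f'_r(c)$. On $[a,c]$ it builds the subdivision greedily so that $\sqrt{(a_i-a_{i-1})(f'_l(a_i)-f'_r(a_{i-1}))}$ is roughly equidistributed, and bounds the error via the sup-norm estimate $R_1=\tfrac14|f'_l-f'_r|\cdot\text{length}$ of Lemma~\ref{lem_approx_1-intervalle}; this handles atoms of $f''$ automatically because it works with one-sided derivatives, not with a density. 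On $[c,b]$ it uses equal-$f$-increment knots and the estimate $R_3$ of the same lemma, summing to an integral of $\psi(s)=\overline w(s)/s$. When the modulus is affine ($\alpha=A=1$), optimising over $\delta$ produces exactly the factor $\Gamma$ in closed form (see \eref{eq-refined}--\eref{eq-approx-con2}); that is where $\Gamma_J$ comes from, not from a bound on $\int (f'')^{1/3}$. Applied on each $J$ with the empirical modulus $w_J(u)=2/n_J+u/\lambda(J)$, this yields directly $\int_J|f-\overline f_J|\,dQ_J\le (n/n_J)\bigl(2V_J/n + n_JV_J\Gamma_J/(nK_J^2)\bigr)$, and the rest is your allocation argument.
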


We deduce
\begin{cor}\label{cor-approx-mcc}
Let $\cX$ be some compact interval of length $L>0$, $k$ a positive integer and $x_{1,n},\ldots,x_{n,n}$ equispaced points  in $\cX$, that is, $x_{i+1}-x_{i,n}=L/n$ for $i\in\{1,\ldots,n-1\}$. Assume that $\sigma_{p}<+\infty$ for some $p\in [1,2]$ and that $f\et$ belongs to $\cC_{k}$ with  variation $V=V(f\et)$ defined by~\eref{eq-tvar} and $W=W_{\cJ}(f\et)$ defined in~\eref{def-W}. Then, there exists a numerical constant $C>0$ such that
\[
B_{n}(f\et,\cC_{k})\le C\cro{\sigma_{p}^{\frac{2}{s+2}}\pa{\frac{\sqrt{W}}{n}}^{\frac{2s}{s+2}}+\sigma_{p}\pa{\frac{7k-2}{n}}^{s}+\frac{V}{n}}\quad \text{with $s=1-\frac{1}{p}$.}
\]
\end{cor}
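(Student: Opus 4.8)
The plan is to mirror the proof of Corollary~\ref{cor-monok}, replacing the monotone approximation bound by the one of Proposition~\ref{prop-approx-mcc}. Fix $p\in[1,2]$ with $\sigma_p<+\infty$ and write $s=1-1/p\in[0,1/2]$. Starting from \eref{eq-adaptcon} and using Proposition~\ref{prop-approx-mcc}, for every $\gamma>0$ there is an element $\overline f\in\cO_{\text{pl}}(\PEI{2(k+\gamma)})$ with $\ell(f\et,\overline f)\le W/\gamma^2+2V/n$. Since $\PEI{2(k+\gamma)}\le 2(k+\gamma)$ yields $3k+2(\PEI{2(k+\gamma)}-1)\le 7k-2+4\gamma$, this gives
\[
B_{n}(f\et,\cC_{k})\le \frac{3W}{\gamma^2}+\frac{6V}{n}+\kappa\sigma_p\pa{\frac{7k-2+4\gamma}{n}}^s .
\]

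Next I would use the subadditivity of $u\mapsto u^s$ (valid since $s\le 1$) to split the last term as $\kappa\sigma_p[((7k-2)/n)^s+(4\gamma/n)^s]$. This already isolates the $V/n$ term and the $((7k-2)/n)^s$ term of the target bound, and leaves the $\gamma$-dependent quantity $\tfrac{3W}{\gamma^2}+\kappa 4^{s}\sigma_p\,\gamma^{s}/n^{s}$ to be optimised over $\gamma>0$.

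The only genuine computation is this optimisation. The two contributions vary in opposite directions (as $\gamma^{-2}$ and $\gamma^{s}$), so I would balance them by choosing $\gamma$ proportional to $(Wn^{s}/\sigma_p)^{1/(s+2)}$. Substituting this value, both contributions become of order $\sigma_p^{2/(s+2)}W^{s/(s+2)}n^{-2s/(s+2)}$, and since $W^{s/(s+2)}n^{-2s/(s+2)}=(\sqrt W/n)^{2s/(s+2)}$ this is exactly the first term of the claimed bound. Collecting the three pieces produces the inequality with a suitable numerical constant $C$.

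Finally I would treat the degenerate case $W=0$ separately, exactly as in the proof of Corollary~\ref{cor-monok}: taking any $\gamma\in(0,1/2)$ makes $\PEI{2(k+\gamma)}=2k$, so that $\ell(f\et,\overline f)\le 2V/n$ and the complexity term reduces to $\kappa\sigma_p((7k-2)/n)^{s}$, which is dominated by the right-hand side (whose first term then vanishes for $s>0$, the case $s=0$ being trivial). The main obstacle, though entirely routine, is keeping the exponents straight in the $\gamma$-optimisation so that the balanced order collapses precisely to $\sigma_p^{2/(s+2)}(\sqrt W/n)^{2s/(s+2)}$; I do not expect any real difficulty beyond this bookkeeping.
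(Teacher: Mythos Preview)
Your proposal is correct and follows essentially the same route as the paper's proof: combine \eref{eq-adaptcon} with Proposition~\ref{prop-approx-mcc}, use the subadditivity of $u\mapsto u^{s}$ to separate the $(7k-2)/n$ term from the $4\gamma/n$ term, balance $W/\gamma^{2}$ against $\sigma_{p}(\gamma/n)^{s}$ by taking $\gamma$ of order $(Wn^{s}/\sigma_{p})^{1/(s+2)}$, and handle $W=0$ with a small $\gamma$ so that $\PEI{2(k+\gamma)}=2k$. The paper makes the identical choices (with $\gamma=1/3$ in the degenerate case and $\gamma=(W/(\kappa\sigma_{p}))^{1/(s+2)}n^{s/(s+2)}$ otherwise), so there is no substantive difference.
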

When $W$ and $p$ are positive, the rate of convergence of $\widehat f$ to $f\et$ is therefore at least $n^{-2s/(s+2)}$. From a nonasymptotic point of view, when $W$ is small enough the risk bound is of the same order
as that we would have for estimating a $k$-piecewise linear function. The quantity $W$ is small  when either the variation or the linear index of the regression function $f\et$ is small on every interval $J$ that contains a design point.

To illustrate Corollary~\ref{cor-approx-mcc}, let us assume that  $\cX=[0,1]$ , $x_{i,n}=(i-1/2)/n$ for $i\in\{1,\ldots,n\}$, and $f\et$ is continuous and convex-concave on $\cX$ with variation $V=V(f\et)<+\infty$. Applying Corollary~\ref{cor-approx-mcc} with $k=2$ and using the fact that $W\le 2V$ by \eref{ine-W}, we obtain that
\[
B_{n}(f\et,\cC_{k})\le C\cro{\sigma_{p}^{\frac{2}{s+2}}\pa{\frac{\sqrt{2V}}{n}}^{\frac{2s}{s+2}}+\sigma_{p}\pa{\frac{12}{n}}^{s}+\frac{V}{n}}.
\]
%

\begin{proof}[Proof of Corollary~\ref{cor-approx-mcc}]
It follows from Proposition~\ref{prop-approx-mcc} and \eref{eq-adaptcon} that for any $\gamma>0$,
\begin{align*}
B_{n}(f\et,\cC_{k})&\le \frac{3W}{\gamma^{2}}+\kappa\sigma_{p}\pa{\frac{3k+2(\PEI{2k+2\gamma}-1)}{n}}^{s}+\frac{6V}{n}.
\end{align*}
If $W=0$, we choose $\gamma=1/3$ and get that $B_{n}(f\et,\cC_{k})\le \kappa\sigma_{p}((7k-2)/n)^{s}+6V/n$. If $W>0$, the result follows by choosing $\gamma=\pa{W/(\kappa\sigma_{p})}^{\frac{1}{s+2}}n^{\frac{s}{s+2}}$ and using the fact that
\begin{align*}
B_{n}(f\et,\cC_{k})&\le \frac{3W}{\gamma^{2}}+\kappa\sigma_{p}\cro{\pa{\frac{4\gamma}{n}}^{s}+\pa{\frac{7k-2}{n}}^{s}}+\frac{6V}{n}.
\end{align*}
\end{proof}

The previous result applies to equispaced design points. What happens when the design is not equispaced? This is what we want to investigate in the remaining part of this section. For the sake of simplicity, we restrict ourselves to the set $\cC_{1}$ of monotone-convex-concave functions on the interval $\cX$. We start with the following definitions.
\begin{df}
Let $K$ be a positive integer and $f$ be a continuous, monotone and convex-concave function on the compact interval $[a,b]$, $a<b$. We say that $l$ is a $K$-linear interpolation of $f$ on $[a,b]$ if there exists a subdivision $a_{0}=a<a_{1}<\ldots<a_{K}=b$ such that $l$ is affine on $[a_{i-1},a_{i}]$ and coincides with $f$ at $a_{i-1}$ and $a_{i}$, i.e. $f(a_{i-1})=l(a_{i-1})$ and $f(a_{i})=l(a_{i})$,  for all $i\in\{1,\ldots,K\}$.
\end{df}
If $l$ is a linear interpolation of a continuous, monotone and convex-concave function $f$ on $[a,b]$, then $l$ is also continuous, monotone and convex-concave on this interval. In fact, $l$ has the same monotonicity as $f$ and if $f$ is convex (respectively concave) so is $l$.

\begin{df}\label{def_reg_measure}
Let $\lambda$ denote the Lebesgue measure on $\R$. A modulus $w$ of a Borel measure $\mu$ on $\R$ is a nonnegative, nondecreasing, concave function on $[0,+\infty)$ that satisfies
\[
\mu(I)\le w(\lambda(I))\quad \text{for every bounded interval $I\subset \R$.}
\]
\end{df}
In this definition, we do not require that $w(0)=0$ as is usually the case for a modulus {\em of continuity}. By doing so, we allow the measure $\mu$ to be atomic. This is the situation we have in mind as we shall mainly apply it to $\mu=n^{-1}\sum_{i=1}^{n}\delta_{x_{i,n}}$. 

\begin{prop}\label{propw}
Let $g$ be an increasing continuous function on a compact interval $\cX$ with (concave) modulus of continuity $\overline w$. If $x_{i,n}=g^{-1}(i/n)$ for all $i\in\{1,\ldots,n\}$ then $w:u\mapsto n^{-1}+\overline w(u)$ is a modulus for $Q=n^{-1}\sum_{i=1}^{n}\delta_{x_{i,n}}$. 
\end{prop}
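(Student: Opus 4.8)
The plan is to verify directly the three structural requirements of Definition~\ref{def_reg_measure} for $w$ and then to establish the defining inequality $Q(I)\le w(\lambda(I))$ by an elementary point-counting argument.

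First I would dispose of the structural properties, which are immediate. Since $\overline w$ is a (concave) modulus of continuity, it is nonnegative, nondecreasing and concave on $[0,+\infty)$; adding the positive constant $n^{-1}$ preserves all three properties, so $w:u\mapsto n^{-1}+\overline w(u)$ is again nonnegative, nondecreasing and concave. The substance lies in bounding $Q(I)$ for an arbitrary bounded interval $I=[a,b]$ with $a\le b$ (half-open or open intervals give the same bound, since $Q$ only charges the finitely many atoms $x_{i,n}$). As every atom lies in $\cX$, only indices with $x_{i,n}\in I\cap\cX$ contribute, so I would first replace $a$ by $a\vee\inf\cX$ and $b$ by $b\wedge\sup\cX$: if these cross, then $Q(I)=0\le w(\lambda(I))$ trivially because $w\ge 0$, and otherwise the new endpoints lie in $\cX$, the count $N(I)$ of design points in $I$ is unchanged, and the interval has only shrunk, so (as $\overline w$ is nondecreasing) any bound proved for the reduced interval is stronger.

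The key observation is that, $g$ being increasing, applying $g$ to the inequalities $a\le x_{i,n}\le b$ together with the identity $g(x_{i,n})=i/n$ yields the inclusion
\[
\ac{i:\ x_{i,n}\in [a,b]}\subseteq \ac{i\in\{1,\ldots,n\}:\ g(a)\le i/n\le g(b)},
\]
so that $N(I)$ is at most the number of the equispaced points $1/n,2/n,\ldots,1$ (common gap $1/n$) lying in the interval $[g(a),g(b)]$ of length $g(b)-g(a)$. Note that only this one inclusion is needed for an upper bound, which sidesteps any issue about inverting $g$ outside its range. I would then invoke the elementary counting bound: among points in arithmetic progression with step $\delta>0$, an interval of length $\ell$ contains at most $\ell/\delta+1$ of them, since consecutive such points $t_1<\cdots<t_k$ in the interval satisfy $\ell\ge t_k-t_1=(k-1)\delta$. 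With $\delta=1/n$ and $\ell=g(b)-g(a)$ this gives $N(I)\le n\pa{g(b)-g(a)}+1$, and since $\overline w$ is a modulus of continuity for the increasing function $g$ one has $g(b)-g(a)\le\overline w(b-a)=\overline w(\lambda(I))$. Combining,
\[
Q(I)=\frac{N(I)}{n}\le \pa{g(b)-g(a)}+\frac1n\le \overline w(\lambda(I))+\frac1n=w(\lambda(I)),
\]
which is exactly the required inequality; the additive term $n^{-1}$ in $w$ is thus traced precisely to the ``$+1$'' in the point-counting estimate.

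The only genuinely delicate point is the reduction to endpoints in $\cX$ and the passage through $g$: one must make sure that intervals $I$ protruding beyond $\cX$, or missing $\cX$ entirely, are handled by monotonicity and the trivial case $Q(I)=0$ rather than by evaluating $g^{-1}$ outside the range of $g$. Once the single inclusion above is justified, everything else is routine, and I expect no serious obstacle.
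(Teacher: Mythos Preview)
Your proof is correct and follows essentially the same route as the paper: both establish the inequality $N(I)\le n(g(b)-g(a))+1$ via the elementary counting bound for integers (or equispaced points) in an interval, and then invoke the modulus of continuity to obtain $g(b)-g(a)\le\overline w(\lambda(I))$. Your write-up is in fact more careful than the paper's on two points: you verify the structural properties (nonnegative, nondecreasing, concave) of $w$ explicitly, and you handle intervals whose endpoints may lie outside $\cX$ by first intersecting with $\cX$, whereas the paper tacitly writes $g(a),g(b)$ as if $a,b\in\cX$.
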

\begin{proof}
The length of an interval $J$ is at least the number of integers that fall in this interval minus 1. Given a bounded interval $I\subset \R$ with endpoints $a<b$ we may apply this result to the interval $J=\{ng(x),\; x\in I\}$ the length of which is $n\pa{g(b)-g(a)}$ and obtain that 
\begin{align*}
nQ(I)-1&=\sum_{i=1}^{n}\1_{x_{i,n}\in I}-1=\sum_{i=1}^{n}\1_{(i/n)\in g(I)}-1=\sum_{i=1}^{n}\1_{i\in J}-1\\
&\le \lambda\pa{J}=n\pa{g(b)-g(a)}\le n\overline w(|a-b|).
\end{align*}
This proves the proposition.
\end{proof}
\begin{thm}\label{thm-approx-con}
Let $Q$ be a probability measure on the bounded interval $[a,b]$, $a<b$, $K$ a positive integer and $f$ a continuous, monotone, convex-concave function on $[a,b]$ with variation $V=\ab{f(b)-f(a)}$ and linear index $\Gamma\in [0,1]$. We set $\underline \delta=|f'_{r}(a)|\wedge |f'_{l}(b)|$ and $\overline \delta=|f'_{r}(a)|\vee |f'_{l}(b)|\in [0,+\infty]$.
\begin{listi}
\item If the variation $\ab{f'_{r}(a)-f'_{l}(b)}$ of the derivatives of $f$ is finite on $[a,b]$, there exists a $K$-linear interpolation $\overline f$ of $f$ such that
\begin{align}\label{eq-approx-con00}
\int_{[a,b]}\ab{f-\overline f}dQ&\le \frac{\ab{f'_{r}(a)-f'_{l}(b)}(b-a)}{4K^{2}}.
\end{align}
\item If $Q$ has a modulus $w$ that satisfies for all $x>0$
\[
\Psi(x)=\int_{0}^{x}\frac{\overline w(t)}{t}dt<+\infty\quad \text{with $\overline w(\cdot)=w(\cdot)-w(0)$,}
\]
then for every $\delta\in [\underline \delta,\overline \delta]\cap (0,+\infty)$ there exists a $(2K)$-linear interpolation $\overline f$ of $f$ such that
\begin{equation}\label{eq-approx-con01}
\int_{[a,b]}\ab{f-\overline f}dQ\le V w(0)+ \frac{\pa{\delta-\underline \delta}(b-a)}{4K^{2}}+\frac{V}{K}\cro{\Psi\pa{\frac{V}{2K\delta}}-\Psi\pa{\frac{V}{2K\overline \delta}}}.
\end{equation}
\end{listi}
In particular, if $Q$ has a modulus of the form $w(u)=w(0)+A u^{\alpha}/(b-a)^{\alpha}$ with $A\ge 1$ and $\alpha\in (0,1]$,  there exists a $(2K)$-linear interpolation $\overline f$ of $f$ such that
\begin{align}\label{eq-approx-con}
\int_{[a,b]}\ab{f-\overline f}dQ&\le V\cro{w(0)+ \pa{\frac{2^{1-\alpha}A}{\alpha K^{1+3\alpha}}}^{\frac{1}{1+\alpha}}}.
\end{align}
Furthermore, if $\alpha=1=A$ then $\overline f$ satisfies
\begin{align}\label{eq-approx-con2}
\int_{[a,b]}\ab{f-\overline f}dQ&\le V\cro{w(0)+\frac{\Gamma}{K^{2}}}.
\end{align}
\end{thm}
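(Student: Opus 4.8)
The plan is to reduce everything to the case where $f$ is increasing and convex: the decreasing and/or concave cases follow by applying the result to $x\mapsto f(-x)$ and to $-f$, which leave $V,\Gamma,\underline\delta,\overline\delta$, the length $b-a$, the modulus of $Q$ and the class of $K$-linear interpolations unchanged. In this normalisation $f'$ is nondecreasing, $\underline\delta=f'_r(a)$, $\overline\delta=f'_l(b)$ and $V=f(b)-f(a)$. The workhorse is a local interpolation estimate: if $l$ is the chord of a convex $f$ on a subinterval $[c,d]$, then $0\le l-f$ and, writing $m=(f(d)-f(c))/(d-c)$ for the chord slope and $x^{\star}$ for a maximiser of $e=l-f$, one has $e(x^{\star})\le(x^{\star}-c)\pa{m-f'_r(c)}$ and $e(x^{\star})\le(d-x^{\star})\pa{f'_l(d)-m}$; multiplying these and applying $ab\le\pa{(a+b)/2}^2$ to each factor gives the key bound $\sup_{[c,d]}\ab{l-f}\le\tfrac14(d-c)\pa{f'_l(d)-f'_r(c)}$.

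For part~(i) I would place the $K$ knots by equidistributing the nondecreasing function $\Lambda(x)=\frac{x-a}{b-a}+\frac{f'(x)-\underline\delta}{\overline\delta-\underline\delta}$, which runs from $0$ to $2$, putting knots at the jumps of $f'$ so that each jump is absorbed at a knot and contributes to no subinterval. Then on each $[a_{i-1},a_i]$ the increment of $\Lambda$ is at most $2/K$, so simultaneously $\frac{h_i}{b-a}\le\frac2K$ and $\frac{f'_l(a_i)-f'_r(a_{i-1})}{\overline\delta-\underline\delta}\le\frac2K$, whence by the arithmetic--geometric mean inequality $h_i\pa{f'_l(a_i)-f'_r(a_{i-1})}\le\frac{(b-a)(\overline\delta-\underline\delta)}{K^{2}}$. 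Combining with the local estimate gives $\norm{f-\overline f}_\infty\le\frac{(b-a)(\overline\delta-\underline\delta)}{4K^{2}}$, and since $Q$ is a probability measure $\int\ab{f-\overline f}\,dQ\le\norm{f-\overline f}_\infty$; as $\overline\delta-\underline\delta=\ab{f'_r(a)-f'_l(b)}$ this is \eref{eq-approx-con00}.

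For part~(ii) the derivative variation may be infinite, so part~(i) cannot be used directly. Given $\delta\in[\underline\delta,\overline\delta]\cap(0,+\infty)$ I would split $[a,b]$ at $c=\sup\ac{x:f'(x)\le\delta}$ into a flat piece $[a,c]$, on which $f'$ has variation at most $\delta-\underline\delta$, and a steep piece $[c,b]$, on which $f'$ ranges in $[\delta,\overline\delta]$. On $[a,c]$ I apply part~(i); since $Q$ is a probability measure this contributes at most $\frac{(\delta-\underline\delta)(b-a)}{4K^{2}}$, the second term of \eref{eq-approx-con01}. On $[c,b]$ the inverse $g=f^{-1}$ is concave with $g'\in[1/\overline\delta,1/\delta]$, so $g'$ has \emph{finite} variation $\frac1\delta-\frac1{\overline\delta}$ even when $\overline\delta=+\infty$; the idea is to interpolate $g$ by chords (the $x$-knots $b_j=g(v_j)$ then define a piecewise-linear $\overline f$, the chords of $f$ being the inverses of the chords of $g$) and to convert the horizontal approximation error of $g$ into the vertical error $\int_{[c,b]}\ab{f-\overline f}\,dQ$ through $w$. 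The bound $Q(I_j)\le w(h_j)=w(0)+\overline w(h_j)$ splits the contribution into an atomic part summing to at most $Vw(0)$ (the first term) and a part $\sum_j\mathrm{osc}_j\,\overline w(h_j)$, in which each steep subinterval carries oscillation of order $V/K$ and length of order $\frac{V}{2K\delta}$ (the precise constant and the resulting count of $2K$ affine pieces coming from the bookkeeping of the value and slope budgets). Laying the knots out geometrically in the slope variable turns this sum into a Riemann sum for $\int_\delta^{\overline\delta}\overline w\!\pa{\frac{V}{2Ks}}\frac{ds}{s}$, which by the substitution $t=V/(2Ks)$ and the monotonicity of $t\mapsto\overline w(t)/t$ (a consequence of the concavity of $w$) equals $\frac VK\cro{\Psi\pa{\frac{V}{2K\delta}}-\Psi\pa{\frac{V}{2K\overline\delta}}}$, the third term. \textbf{The delicate point}, and the step I expect to be hardest, is exactly this steep-interval estimate: choosing the knots so that the number of pieces stays under control while oscillations and lengths are simultaneously tamed, and turning the resulting sum into the integral defining $\Psi$ uniformly in $f$, despite the unbounded slope at the singular endpoint.

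Finally, the two displayed specialisations follow by calculus. For $w(u)=w(0)+Au^{\alpha}/(b-a)^{\alpha}$ one has $\Psi(x)=\frac{A}{\alpha}\pa{x/(b-a)}^{\alpha}$; bounding $\delta-\underline\delta\le\delta$ and dropping $\Psi\pa{V/(2K\overline\delta)}\ge0$, the estimate becomes $Vw(0)+\frac{\delta(b-a)}{4K^{2}}+\frac{V^{1+\alpha}A}{\alpha 2^{\alpha}(b-a)^{\alpha}\delta^{\alpha}K^{1+\alpha}}$, and minimising the last two terms over $\delta$ (a standard $\delta\mapsto c_1\delta+c_2\delta^{-\alpha}$ optimisation) produces the exponent $\frac1{1+\alpha}$ and the power $K^{-(1+3\alpha)/(1+\alpha)}$ of \eref{eq-approx-con}, every term scaling linearly in $V$. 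For $\alpha=A=1$ one has $\Psi(x)=x/(b-a)$, and choosing $\delta=\ab\Delta=V/(b-a)$ (the average slope, which lies in $[\underline\delta,\overline\delta]$ since $\underline\delta(b-a)\le V\le\overline\delta(b-a)$) turns the two nonatomic terms into $\frac1{K^{2}}\cro{\frac{(b-a)(\delta-\underline\delta)}{4}+\frac{V^{2}}{2(b-a)}\pa{\frac1\delta-\frac1{\overline\delta}}}$; substituting $\delta=V/(b-a)$ and comparing with the definition of the linear index, one checks that the bracket equals $V\Gamma-\frac14\pa{V-(b-a)\underline\delta}\le V\Gamma$, which gives \eref{eq-approx-con2}.
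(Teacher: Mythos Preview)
Your reduction to the increasing convex case and your local chord estimate are correct and match the paper's Lemma~5.

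For part~(i) your construction is genuinely different from the paper's and arguably cleaner. The paper builds the subdivision \emph{greedily}: it sets $a_i$ to be the largest point with $(a_i-a_{i-1})\pa{f'_l(a_i)-f'_r(a_{i-1})}\le (b-a)(\overline\delta-\underline\delta)/K^{2}$, and then uses Cauchy--Schwarz on the square roots to show by contradiction that the process terminates in at most $K$ steps. You instead equidistribute the additive function $\Lambda$ and get the product bound from AM--GM on the sum. Both routes give the same constant; yours avoids the contradiction argument. One small point to make precise: you need the generalized-inverse construction $a_i=\inf\{x:\Lambda_r(x)\ge 2i/K\}$, which automatically places knots at the relevant jumps and yields $\Lambda_l(a_i)-\Lambda_r(a_{i-1})\le 2/K$; ``putting knots at the jumps of $f'$'' is slightly misleading phrasing, since not every jump need be a knot.

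Part~(ii) is where you have a real gap. Your split at $c$ with $f'_l(c)\le\delta\le f'_r(c)$ and the use of part~(i) on $[a,c]$ both match the paper exactly. On $[c,b]$, however, your sketch is internally inconsistent: you first invoke equal-value increments (``oscillation of order $V/K$''), then say ``laying the knots out geometrically in the slope variable'', which is a \emph{different} partition. More importantly, your crude bound $\int_{I_j}\ab{f-\overline f}\,dQ\le(V/K)\,w(h_j)$ does not produce the $\Psi$ integral with the correct factor $V/(2K\delta)$: the sum $(V/K)\sum_j\overline w(h_j)$ does not telescope in the required way. The paper's mechanism on $[c,b]$ is sharper: it uses equal-value increments $f(b_i)-f(b_{i-1})=V_c/K$ together with the refined pointwise bound $\sup_{I_j}\ab{f-\overline f}\le B_j$ with $B_j=\dfrac{(\Delta_j-f'_r(b_{j-1}))(f'_l(b_j)-\Delta_j)}{f'_l(b_j)-f'_r(b_{j-1})}$, and crucially the algebraic identity $B_j\le\Delta_j^{2}\pa{\dfrac{1}{f'_r(b_{j-1})}-\dfrac{1}{\Delta_j}}$. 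It is this factor $\pa{1/f'_r(b_{j-1})-1/\Delta_j}$ that, after multiplying by $h_j^{2}=(V_c/K)^{2}/\Delta_j^{2}$ and bounding $\psi$ on the interval $[1/f'_l(b_j),1/f'_r(b_{j-1})]$, makes the sum telescope into $\int_{1/\overline\delta}^{1/\delta}\psi\pa{\tfrac{V t}{2K}}\,dt$. Your ``osc $\times\,w(h)$'' bound loses exactly this telescoping structure, and the factor of $2$ inside $\Psi$ that you attribute to ``bookkeeping'' in fact comes from Lemma~\ref{lem-w} applied to the \emph{half}-length $(b_j-b_{j-1})/2$.

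Your derivations of \eref{eq-approx-con} (optimising $c_1\delta+c_2\delta^{-\alpha}$) and \eref{eq-approx-con2} (taking $\delta=V/(b-a)$ and checking the bracket equals $V\Gamma-\tfrac14\pa{V-(b-a)\underline\delta}\le V\Gamma$) are correct and coincide with the paper's.
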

The proof of this result is postponed to Section~\ref{sect-pf-thmapprox}. We deduce the following corollary.
\begin{cor}\label{cor-mcc-irr}
Assume that the regression function $f\et$ is continuous, monotone and convex-concave on $\cX=[a,b]$, $a<b$, with variation $V=\ab{f\et(a)-f\et(b)}$ and that $\sigma_{p}<+\infty$ for some $p\in [1,2]$.
\begin{listi}
\item If the variation $V'=\ab{(f\et)'_{r}(a)-(f\et)'_{l}(b)}$ of the derivatives of $f\et$ is finite on $[a,b]$, there exists a numerical constant $C>0$ such that
\begin{equation}\label{eq-vp-Vf}
B_{n}(f\et,\cC_{1})\le C \cro{\sigma_{p}^{\frac{2}{s+2}}\pa{\frac{V'(b-a)}{n^{2}}}^{\frac{s}{s+2}}+\frac{\sigma_{p}}{n^{s}}}\quad \text{with $s=1-1/p$.}
\end{equation}
\item If the probability $Q=n^{-1}\sum_{i=1}^{n}\delta_{x_{i,n}}$ has a modulus $w$ of the form $w(u)=w(0)+A(u/(b-a))^{\alpha}$ for $u\ge 0$ with $A\ge 1$ and $\alpha\in (0,1]$, there exists a numerical constant $C>0$ such that
\begin{equation}\label{eq-vp}
B_{n}(f\et,\cC_{1})\le C\cro{\sigma_{p}^{\frac{\beta}{s+\beta}}\pa{\frac{c_{\alpha}(A)V}{n^{\beta}}}^{\frac{s}{s+\beta}}+Vw(0)+\frac{\sigma_{p}}{n^{s}}}\quad \text{with $\beta=\frac{1+3\alpha}{1+\alpha}\in (1,2]$}
\end{equation}
and $c_{\alpha}(A)=(2^{1-\alpha}A/\alpha)^{1/(1+\alpha)}$.
\end{listi}
\end{cor}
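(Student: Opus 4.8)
The plan is to specialise the adaptation inequality~\eref{eq-adaptcon} to $k=1$ and to control the approximation term $\ell(f\et,\cO_{\text{pl}}(K))$ by means of Theorem~\ref{thm-approx-con}, applied with $Q=n^{-1}\sum_{i=1}^{n}\delta_{x_{i,n}}$, so that $\ell(f\et,g)=\int_{[a,b]}|f\et-g|\,dQ$ for every $g$. The key point is that a $K$-linear interpolation $\overline f$ of the continuous, monotone, convex-concave function $f\et$ is again continuous, monotone and convex-concave (it inherits the monotonicity and the convexity/concavity of $f\et$) and is $K$-piecewise linear, so $\overline f\in\cO_{\text{pl}}(K)$; Theorem~\ref{thm-approx-con} therefore yields upper bounds for $\ell(f\et,\cO_{\text{pl}}(K))$ directly. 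Throughout I fix the value of $p$ for which $\sigma_{p}<+\infty$ and bound the infimum over $p$ in~\eref{eq-adaptcon} by its value at this $p$, writing $s=1-1/p$.

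For part~(i), I would apply Theorem~\ref{thm-approx-con}(i) to get $\ell(f\et,\cO_{\text{pl}}(K))\le V'(b-a)/(4K^{2})$ for every integer $K\ge 1$. Since $\cO_{\text{pl}}(K)$ enters~\eref{eq-adaptcon} with penalty index $3+2(K-1)=2K+1$, this yields
\[
B_{n}(f\et,\cC_{1})\le\inf_{K\ge 1}\ac{\frac{3V'(b-a)}{4K^{2}}+\kappa\sigma_{p}\pa{\frac{2K+1}{n}}^{s}}.
\]
I would then optimise over the integer $K$. Bounding $(2K+1)^{s}\le 3^{s}K^{s}$ (valid for $K\ge 1$) reduces the task to balancing a term in $K^{-2}$ against one in $(K/n)^{s}$; the balancing value is $K_{*}\asymp(V'(b-a)n^{s}/\sigma_{p})^{1/(s+2)}$, and taking $K=\PES{K_{*}}$ produces the leading term $\sigma_{p}^{2/(s+2)}(V'(b-a)/n^{2})^{s/(s+2)}$ (the exponent of $n$ being $-2s/(s+2)$). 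When $K_{*}<1$ one takes $K=1$, whose penalty $\kappa\sigma_{p}(3/n)^{s}$ supplies the residual $\sigma_{p}/n^{s}$. This is~\eref{eq-vp-Vf}.

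For part~(ii), the argument is parallel but rests on the ``In particular'' conclusion of Theorem~\ref{thm-approx-con}: under the modulus $w(u)=w(0)+A(u/(b-a))^{\alpha}$ there is a $(2K)$-linear interpolation $\overline f\in\cO_{\text{pl}}(2K)$ with $\ell(f\et,\overline f)\le V[w(0)+c_{\alpha}(A)K^{-\beta}]$, where $\beta=(1+3\alpha)/(1+\alpha)$ and $c_{\alpha}(A)=(2^{1-\alpha}A/\alpha)^{1/(1+\alpha)}$, obtained by rewriting $(2^{1-\alpha}A/(\alpha K^{1+3\alpha}))^{1/(1+\alpha)}=c_{\alpha}(A)K^{-\beta}$. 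Here $\cO_{\text{pl}}(2K)$ enters~\eref{eq-adaptcon} with penalty index $3+2(2K-1)=4K+1$, so
\[
B_{n}(f\et,\cC_{1})\le\inf_{K\ge 1}\ac{3Vw(0)+3Vc_{\alpha}(A)K^{-\beta}+\kappa\sigma_{p}\pa{\frac{4K+1}{n}}^{s}}.
\]
The constant contribution $3Vw(0)$ is the $Vw(0)$ term. Balancing $K^{-\beta}$ against $(K/n)^{s}$, with $K_{*}\asymp(c_{\alpha}(A)Vn^{s}/\sigma_{p})^{1/(s+\beta)}$, produces the leading term $\sigma_{p}^{\beta/(s+\beta)}(c_{\alpha}(A)V/n^{\beta})^{s/(s+\beta)}$, while the regime $K_{*}<1$ and the choice $K=1$ account for $\sigma_{p}/n^{s}$. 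This is~\eref{eq-vp}.

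The only genuine subtlety is the discretisation: the optimal balancing $K_{*}$ is a positive real, whereas $K$ ranges over integers $\ge 1$. As in the proofs of Corollaries~\ref{cor-monok} and~\ref{cor-approx-mcc}, this is resolved by setting $K=\PES{K_{*}}$ (using $K\le 2K_{*}$ when $K_{*}\ge 1$, together with the subadditivity of $u\mapsto u^{s}$ on $[0,+\infty)$, which holds because $s\in[0,1]$), and by treating the boundary regime $K_{*}<1$ separately, where the penalty at $K=1$ dominates. The remaining manipulations merely track absolute constants, which are absorbed into $C$.
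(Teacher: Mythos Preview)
Your proposal is correct and follows essentially the same route as the paper: apply Theorem~\ref{thm-approx-con} to bound $\ell(f\et,\cO_{\text{pl}}(K))$ (respectively $\ell(f\et,\cO_{\text{pl}}(2K))$), plug into~\eref{eq-adaptcon} with $k=1$, and optimise over $K$ by choosing $K=\PES{K_{*}}$ with the balancing value $K_{*}$ you wrote. The paper does not split into the regimes $K_{*}\ge 1$ and $K_{*}<1$ explicitly---it simply takes $K=\PES{Z}\ge 1$ in both parts and lets the residual $\sigma_{p}/n^{s}$ emerge from the $+1$ in $2K+1$ (or $4K+1$) via subadditivity of $u\mapsto u^{s}$---but this is a purely cosmetic difference.
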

In most of our applications, $w(0)$ is of order $1/n$. When $p=2$ and the variation $V'$  is finite on $[a,b]$,  the estimator converges at a rate which is at least  $n^{-2/5}$, independently of the locations of the design points. When $V'=+\infty$, this is no longer the case and the rate that we get does depend on the modulus of $Q$. Provided that $w(0)$ is of order $1/n$ and $p=2$, this rate is at least  $v_{n}=n^{-\beta/(2\beta+1)}$ with $\beta=\beta(\alpha)$ given by \eref{eq-vp}. If $\alpha=1$, which means that the design is close to equispaced, the rate is at least $n^{-2/5}$. When $\alpha$ is close to 0, the design is very irregular,  $\beta$ is close to 1 and $v_{n}$ close to $n^{-1/3}$ which is the rate we would get for estimating a regression function which is only monotone on $[a,b]$.

\begin{proof}[Proof of Corollary~\ref{cor-mcc-irr}]
Let $K$ be a positive integer to be chosen later on. Under $(i)$, there exists a $K$-linear interpolation $\overline f$ of $f\et$ which satisfies \eref{eq-approx-con00}. Since $\overline f$ is an element of $\cO(2K+1)$ by Proposition~\ref{prop-extrcck}, we derive from~\eref{eq-adaptcon} that
\begin{align*}
B_{n}(f\et,\cC_{1})&\le \frac{3V'(b-a)}{4K^{2}}+\kappa\sigma_{p}\pa{\frac{2K+1}{n}}^{s}.
\end{align*}
We obtain~\eref{eq-vp-Vf} by choosing $K=\PES{Z_{1}}\ge 1$ with $Z_{1}=\pa{V'(b-a) n^{s}/\sigma_{p}}^{1/(2+s)}$.
Under $(ii)$, we use \eref{eq-approx-con} where $\overline f$ is now a $(2K)$- linear interpolation of $f\et$. Since it belongs to $\cO(4K+1)$, we obtain that
\begin{align*}
B_{n}(f\et,\cC_{1})&\le 3\pa{Vw(0)+ \frac{c_{\alpha}(A)V}{K^{\beta}}}+\kappa\sigma_{p}\pa{\frac{4K+1}{n}}^{s}.
\end{align*}
Inequality \eref{eq-vp} is obtained by taking $K=\PES{Z_{2}}\ge 1$ with $Z_{2}=\pa{c_{\alpha}(A)Vn^{s}/\sigma_{p}}^{1/(\beta+s)}$.
\end{proof}

\subsection{Monotone single index model}
In this section, we consider the class $\cF=\cS$ of functions $f$ on $\cX=\R^{\frm}$, $\frm\ge 1$, of the form $x\mapsto \phi \pa{\scal{\theta}{x}}$ where $\phi$ is monotone on $\R$ and $\theta$ an element of the unit sphere $\sS_{\frm}$ of $\R^{\frm}$.  The function $\phi$ associated with $f$ will be called {\em a link function} and we set $\phi(-\infty)=\lim_{z\to -\infty}\phi(z)$ and $\phi(+\infty)=\lim_{z\to +\infty}\phi(z)$.

\begin{prop}\label{prop-VCSingle}
If $\overline f=\overline \phi(\scal{\overline \theta}{\cdot})$ is an element of $\cS$ with a link function $\overline \phi$ which is $K$-piecewise constant on $\R$, then $\overline f$ is extremal in $\cS$ with degree not larger than $(\frm+1)K$.
\end{prop}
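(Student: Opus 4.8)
The plan is to bound directly the VC-dimensions of the two classes $\cA_{>}(\overline f)$ and $\cA_{<}(\overline f)$ of Definition~\ref{def-extremal}, exploiting that $\overline\phi$ is simultaneously monotone and $K$-piecewise constant. First I would write $\overline\phi=\sum_{j=1}^{K}c_{j}\1_{I_{j}}$, where $I_{1},\ldots,I_{K}$ are the at most $K$ intervals partitioning $\R$ on which $\overline\phi$ is constant, and introduce the slabs $S_{j}=\{x\in\R^{\frm}:\scal{\overline\theta}{x}\in I_{j}\}$. Since $x\mapsto\scal{\overline\theta}{x}$ maps $\R^{\frm}$ onto $\R$, the $S_{j}$ form a partition of $\R^{\frm}$ that is \emph{fixed}, i.e.\ independent of $f$, because it depends only on the fixed data $\overline\theta$ and $\overline\phi$; on $S_{j}$ the function $\overline f$ is identically equal to $c_{j}$.

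Next, for an arbitrary $f=\phi(\scal{\theta}{\cdot})\in\cS$ I would restrict the event $\{f>\overline f\}$ to each slab: on $S_{j}$ one has $\{f>\overline f\}\cap S_{j}=\{x\in S_{j}:\phi(\scal{\theta}{x})>c_{j}\}$. Because $\phi$ is monotone, $\{z\in\R:\phi(z)>c_{j}\}$ is a half-line (possibly empty or equal to $\R$), hence $\{x\in\R^{\frm}:\phi(\scal{\theta}{x})>c_{j}\}$ is a half-space of $\R^{\frm}$ with normal $\pm\theta$; thus $\{f>\overline f\}\cap S_{j}=S_{j}\cap H_{j}$ for some half-space $H_{j}$. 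Consequently every element of $\cA_{>}(\overline f)$ has the form $\bigcup_{j=1}^{K}(S_{j}\cap H_{j})$ with $H_{1},\ldots,H_{K}$ half-spaces that in fact share the direction $\theta$; I would simply discard this extra constraint, as it can only decrease the VC-dimension. The identical reasoning applies to $\cA_{<}(\overline f)$, with the superlevel sets replaced by sublevel sets.

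It then remains to bound the VC-dimension of the relaxed class $\cA_{>}'=\{\bigcup_{j=1}^{K}(S_{j}\cap H_{j}):H_{1},\ldots,H_{K}\text{ half-spaces}\}\supseteq\cA_{>}(\overline f)$. The crucial point is that, the partition $(S_{j})_{j}$ being fixed and disjoint, the trace of any such set on a finite $B\subset\R^{\frm}$ is the disjoint union of the traces $H_{j}\cap B_{j}$, where $B_{j}=B\cap S_{j}$; hence this trace is determined by, and determines, the tuple $(H_{1}\cap B_{1},\ldots,H_{K}\cap B_{K})$. Therefore $B$ is shattered by $\cA_{>}'$ if and only if each $B_{j}$ is shattered by half-spaces. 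Since the class of half-spaces of $\R^{\frm}$ has VC-dimension $\frm+1$ (by Radon's theorem no $\frm+2$ points can be shattered), any $B_{j}$ with more than $\frm+1$ points fails to be shattered; so if $|B|>(\frm+1)K$ the pigeonhole principle forces some $B_{j}$ to carry more than $\frm+1$ points, and $B$ is not shattered. This gives VC-dimension at most $(\frm+1)K$ for $\cA_{>}'$, hence for $\cA_{>}(\overline f)$, and symmetrically for $\cA_{<}(\overline f)$, so $\overline f$ is extremal in $\cS$ with degree at most $(\frm+1)K$.

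The main difficulty is bookkeeping rather than conceptual: one must verify that the superlevel set $\{\phi>c_{j}\}$ is always a genuine half-line, including the degenerate cases where it is empty or equal to $\R$, and that the open/closed status of the boundary is immaterial, since the class of \emph{all} half-spaces (open or closed, with any unit normal in $\sS_{\frm}$) still has VC-dimension $\frm+1$ by the same Radon argument applied to a half-space and its convex complement. I note finally that this product-type bound deliberately ignores that the $H_{j}$ share the direction $\theta$; keeping that constraint would yield the sharper degree $\frm+K$, but $(\frm+1)K$ suffices here and keeps the statement parallel to Propositions~\ref{prop-extrMonok} and~\ref{prop-extrcck}.
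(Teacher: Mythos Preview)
Your proof is correct and follows essentially the same route as the paper's: partition $\R^{\frm}$ into the fixed slabs on which $\overline f$ is constant, observe that on each slab the set $\{f>\overline f\}$ is the trace of a half-space because $\phi$ is monotone, and then use the pigeonhole principle together with the fact that half-spaces have VC-dimension $\frm+1$ to conclude. The paper argues directly that shattering is hereditary (if $\cA_{>}(\overline f)$ shatters $\cZ$ it shatters the crowded slab $\cZ(J^{\ast})$), whereas you pass through the relaxed product class $\cA_{>}'$ and your iff characterization of its shattered sets; this is a minor repackaging of the same idea and yields the same bound.
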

We denote by $\cO_{\text{s-pc}}(K)$ the elements $\overline f\in \cS$ which are of the form  described in Proposition~\ref{prop-VCSingle}. If $f\et\in\cO_{\text{s-pc}}(K)$, we deduce from \eref{eq-riskfchap1} that
\begin{equation}\label{eq-cor-monosi}
B_{n}(f\et,\cS)\le \kappa\inf_{p\in [1,2]}\cro{\sigma_{p}\pa{\frac{(\frm+1)K}{n}}^{1-\frac{1}{p}}}.
\end{equation}

For more general regression functions $f\et$,  the following result holds.
\begin{cor}\label{corr-single}
Assume that  $\sigma_{p}$ is finite for some $p\in [1,2]$ and that the regression function $f\et=\phi\et(\scal{\theta\et}{\cdot})$ belongs to $\cS$ with a monotone link function $\phi\et$ the variation $V=|\phi\et(+\infty)-\phi\et(-\infty)|$ of which is finite on $\R$. Then there exists a numerical constant $C>0$ such that
\begin{equation}\label{eq-corr-single}
B_{n}(f\et,\cS)\le C\cro{\sigma_{p}^{\frac{1}{s+1}}\pa{\frac{V(\frm+1)}{n}}^{\frac{s}{s+1}}+\sigma_{p}\pa{\frac{\frm+1}{n}}^{s}}\quad \text{with $s=1-\frac{1}{p}$.}
\end{equation}
\end{cor}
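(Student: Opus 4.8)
The plan is to mimic the proof of Corollary~\ref{cor-monok}, reducing the $\frm$-dimensional single-index problem to a one-dimensional monotone approximation carried out along the true direction $\theta\et$. Writing $z_{i}=\scal{\theta\et}{x_{i,n}}$ for the projected design points, one has $f\et(x_{i,n})=\phi\et(z_{i})$, so that for any candidate link $\overline\phi$ the single-index function $\overline f=\overline\phi(\scal{\theta\et}{\cdot})$ satisfies $\ell(f\et,\overline f)=n^{-1}\sum_{i=1}^{n}\ab{\phi\et(z_{i})-\overline\phi(z_{i})}$. It therefore suffices to approximate the monotone function $\phi\et$, regarded as an element of $\cM_{1}$ on the interval spanned by $z_{1},\ldots,z_{n}$, by a \emph{monotone} piecewise constant link.

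First I would apply Proposition~\ref{prop-approxMono} with $k=1$ to $\phi\et$ and the sorted projected points $z_{1},\ldots,z_{n}$. Since the single interval $J$ carries all $n$ projected points and $V_{J}(\phi\et)\le V$, the $\cJ$-variation appearing there is at most $V$; hence for every $\gamma>0$ there is a monotone, $K$-piecewise constant link $\overline\phi$ with $K=\PEI{1+\gamma}$ such that $n^{-1}\sum_{i=1}^{n}\ab{\phi\et(z_{i})-\overline\phi(z_{i})}\le V/\gamma$. Because $\overline\phi$ is monotone and $K$-piecewise constant, the function $\overline f=\overline\phi(\scal{\theta\et}{\cdot})$ is a genuine element of $\cO_{\text{s-pc}}(K)$; by Proposition~\ref{prop-VCSingle} it is then extremal in $\cS$ with degree at most $(\frm+1)K$, i.e.\ $\overline f\in\cO((\frm+1)K)$, and $\ell(f\et,\cO((\frm+1)K))\le V/\gamma$.

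Plugging $D=(\frm+1)K\le(\frm+1)(1+\gamma)$ into the risk bound \eref{eq-riskfchap1}, for the fixed $p$ with $\sigma_{p}<+\infty$, gives
\[
B_{n}(f\et,\cS)\le\frac{3V}{\gamma}+\kappa\sigma_{p}\pa{\frac{(\frm+1)(1+\gamma)}{n}}^{s}.
\]
Using the subadditivity of $u\mapsto u^{s}$ (valid since $s=1-1/p\in[0,1/2]$) in the form $(1+\gamma)^{s}\le 1+\gamma^{s}$, the right-hand side is at most $3V/\gamma+\kappa\sigma_{p}((\frm+1)/n)^{s}+\kappa\sigma_{p}((\frm+1)/n)^{s}\gamma^{s}$. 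I would then balance the first and third terms by taking $\gamma$ of order $(V/(\kappa\sigma_{p}))^{1/(s+1)}(n/(\frm+1))^{s/(s+1)}$, exactly as in Corollary~\ref{cor-monok}; both collapse to a multiple of $\sigma_{p}^{1/(s+1)}(V(\frm+1)/n)^{s/(s+1)}$, the first term of \eref{eq-corr-single}, while the leftover $\kappa\sigma_{p}((\frm+1)/n)^{s}$ is the second. The degenerate case $V=0$ (in particular when all the $z_{i}$ coincide, so that $[\min_{i}z_{i},\max_{i}z_{i}]$ is trivial) is dispatched by taking $\gamma=1/2$, whence $K=1$, the approximation error vanishes, and only $\kappa\sigma_{p}((\frm+1)/n)^{s}$ remains.

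The only genuinely new ingredient, and the main thing to check carefully, is the reduction in the first two paragraphs: that fixing the direction to the true $\theta\et$ turns the problem into the one-dimensional monotone approximation covered by Proposition~\ref{prop-approxMono}, and that the resulting monotone step link pulls back to an element of $\cO_{\text{s-pc}}(K)$ whose VC degree is controlled by Proposition~\ref{prop-VCSingle} (one should also note that $\overline\phi$ may be extended to all of $\R$ by letting its first and last pieces run to $\pm\infty$, keeping it $K$-piecewise constant). Once this is in place, the optimisation over $\gamma$, equivalently over the number $K$ of pieces of the link, is routine and structurally identical to the monotone case.
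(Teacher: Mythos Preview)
Your proof is correct and follows essentially the same route as the paper: project along the true direction $\theta\et$, apply Proposition~\ref{prop-approxMono} with $k=1$ to the monotone link $\phi\et$ at the points $z_{i}=\scal{\theta\et}{x_{i,n}}$, invoke Proposition~\ref{prop-VCSingle} to place the resulting piecewise-constant single-index function in $\cO((\frm+1)K)$, and then balance $\gamma$ exactly as in Corollary~\ref{cor-monok}. One minor remark: it is cleaner to apply Proposition~\ref{prop-approxMono} with $\cX=\R$ (as the paper does) rather than on the interval spanned by the $z_{i}$, so that the case where all the $z_{i}$ coincide requires no separate treatment and is absorbed by the general bound $V/\gamma$.
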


In particular, when $p=2$, we get that
\[
B_{n}(f\et,\cS)\le C\cro{\pa{\frac{\sigma_{2}^{2}V(\frm+1)}{n}}^{\frac{1}{3}}+\sigma_{p}\sqrt{\frac{\frm+1}{n}}}.
\]

\begin{proof}
Let $z_{i}=\scal{\theta\et}{x_{i,n}}$ for $i\in \{1,\ldots,n\}$. Applying Proposition~\ref{prop-approxMono} with $k=1$ to the function $\phi\et$, which is monotone on $\R$, and to the points $\etc{z}$,  we obtain that for every $\gamma>0$ there exists a function $\overline \phi$ which is monotone and $K$-piecewise constant on $\R$, with $K=\PEI{\gamma+1}\le \gamma+1$, such that $n^{-1}\sum_{i=1}^{n}\ab{\phi\et(z_{i})-\overline \phi(z_{i})}\le  V/\gamma$. Since the function $\overline \phi(\scal{\theta\et}{\cdot})$ belongs to $\cO_{\text{s-pc}}(K)$, we obtain that
\begin{align*}
B_{n}(f\et,\cS)&\le \frac{3 V}{\gamma}+\kappa\sigma_{p}\pa{\frac{(\frm+1)K}{n}}^{s}.
\end{align*}

If $V=0$, we take $\gamma=1/2$, hence $K=1$, and deduce that $B_{n}(f\et,\cS)\le \kappa\sigma_{p}[(\frm+1)/n]^{s}$. Otherwise, we take $\gamma=(V\sigma_{p}^{-1}(n/(\frm+1))^{s})$ and \eref{eq-corr-single} follows from the inequalities
\begin{align*}
B_{n}(f\et,\cS)&\le \frac{3V}{\gamma}+\kappa\sigma_{p}\pa{\frac{(\frm+1)(\gamma+1)}{n}}^{s}\le  \frac{3V}{\gamma}+\kappa\sigma_{p}\pa{\frac{(\frm+1)\gamma}{n}}^{s}+\kappa\sigma_{p}\pa{\frac{\frm+1}{n}}^{s}.
\end{align*}
\end{proof}

\section{Bounding the degrees of some extremal functions}\label{sect-BD}
This section is devoted to the calculation of the degrees of some extremal functions.
Let $I$ be a nontrivial interval and $r$ a positive integer. We say that $f$ is {\em $r$-nondecreasing} on $I$ if for any $\gv=(v_{0},\ldots,v_{r})\in I^{r+1}$ with $v_{0}<\ldots<v_{r}$
\begin{equation}\label{eq-condphi1}
L_{\gv}(f)=\left|\begin{array}{ccccc}
1 & v_{0}&\ldots & v_{0}^{r-1}& f(v_{0}) \\
\vdots & \vdots&\ldots & \vdots& \vdots \\
1 & v_{r}&\ldots & v_{r}^{r-1}& {f(v_{r}) }\\
\end{array}\right|\ge 0.
\end{equation}
When $r=1$, $f$ is $r$-nondecreasing if for any points $v_{0}<v_{1}$ in $I$,
\[
L_{\gv}(f)=\left|\begin{array}{cc}
1 &  f(v_{0})  \\
1 & f(v_{1}) \\
\end{array}\right|=f(v_{1})-f(v_{0})\ge 0
\]
which simply means that $f$ is nondecreasing on $I$. A function $f$ is $2$-nondecreasing if for any points $v_{0}<v_{1}<v_{2}$ in $I$,
\begin{equation}\label{eq-condphi2}
L_{\gv}(f)=\left|\begin{array}{ccccc}
1 & v_{0}& f(v_{0}) \\
1 & v_{1}& f(v_{1}) \\
1 & v_{2}& {f(v_{2}) }\\
\end{array}\right|
=\pa{v_{2}-v_{0}}\cro{\frac{v_{2}-v_{1}}{v_{2}-v_{0}}f(v_{0})+\frac{v_{1}-v_{0}}{v_{2}-v_{0}}f(v_{2})-f(v_{1})}\ge 0,
\end{equation}
which means that $f$ is convex on $I$. Similarly, we shall say that $f$ is {\em $r$-nonincreasing} on $I$ if $-f$ is $r$-nondecreasing or equivalently if for any $\gv=(v_{0},\ldots,v_{r})\in I^{r+1}$ with $v_{0}<\ldots<v_{r}$
\begin{equation}\label{eq-condphi0}
L_{\gv}(f)=\left|\begin{array}{ccccc}
1 & v_{0}&\ldots & v_{0}^{r-1}& f(v_{0}) \\
\vdots & \vdots&\ldots & \vdots& \vdots \\
1 & v_{r}&\ldots & v_{r}^{r-1}& {f(v_{r}) }\\
\end{array}\right|\le 0.
\end{equation}
A function $f$ is {\em $r$-monotone} on $I$ if it is either $r$-nonincreasing or $r$-nondecreasing on this interval. We denote by $\cF_{r}(I)$ the class of $r$-monotone functions on $I$. The set $\cF_{1}(I)$ is therefore the set of monotone functions on $I$ while $\cF_{2}(I)$ is that of functions which are convex-concave on $I$. If $\overline f$ is a polynomial function on $I$ with degree not larger than $r-1$, then for any $\gv=(v_{0},\ldots,v_{r})\in I^{r+1}$,
\[
L_{\gv}(\overline f)=\left|\begin{array}{ccccc}
1 & v_{0}&\ldots & v_{0}^{r-1}& \overline f(v_{0}) \\
\vdots & \vdots&\ldots & \vdots& \vdots \\
1 & v_{r}&\ldots & v_{r}^{r-1}& {\overline f(v_{r}) }\\
\end{array}\right|=0
\]
and $\overline f$ is therefore both $r$-nonincreasing and $r$-nondecreasing. Since $L_{\gv}(f)=L_{\gv}(f-\overline f)$ for every function $f$ on $I$ and $\gv\in I^{r+1}$, when $f$ is $r$-nondecreasing (respectively $r$-nonincreasing) so is $f-\overline f$.

Given a nontrivial interval $\cX$ and two positive integers $r,k$, we denote by $\cF_{r,k}$ the class of functions $f$ on $\cX$ which are $r$-monotone on each element $I$ of a partition of $\cX$ into at most $k$ nontrivial intervals. The aim of this section is to prove the following result.
\begin{thm}\label{thm-VC}
Let $r$ be a positive integer, $\cF$ a subset of $\cF_{r,k}$ and $\overline f\in\cF$ a piecewise polynomial with degree at most $r-1$ based on a partition of $\cX$ into at most $K\ge 1$ nontrivial intervals. Then $\overline f$ is extremal in $\cF$ with degree at most $(r+1)k+r(K-1)$.
\end{thm}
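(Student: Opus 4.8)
The plan is to verify the two conditions of Definition~\ref{def-extremal}, i.e.\ to bound by $(r+1)k+r(K-1)$ the VC-dimensions of $\cA_>(\overline f)=\{\{g>\overline f\},\,g\in\cF\}$ and of $\cA_<(\overline f)$. First I would dispose of $\cA_<$ by a symmetry: the map $g\mapsto -g$ sends $\cF\subset\cF_{r,k}$ into a subclass of $\cF_{r,k}$ (an $r$-nondecreasing function becomes $r$-nonincreasing and conversely) and sends $\overline f$, a piecewise polynomial of degree $\le r-1$ with $K$ pieces, to $-\overline f$, which has the same partition and the same $K$. Since $\{g<\overline f\}=\{(-g)>(-\overline f)\}$, the bound for $\cA_<(\overline f)$ follows from the bound for $\cA_>$ applied to $-\cF$ and $-\overline f$, so it suffices to treat $\cA_>(\overline f)$. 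For this I write $\{g>\overline f\}=\{h_g>0\}$ with $h_g=g-\overline f$ and take, for each fixed $g$, the common refinement of the (at most $k$) intervals of $r$-monotonicity of $g$ and the (at most $K$) intervals on which $\overline f$ is polynomial; this refinement has at most $k+K-1$ pieces, a bound uniform in $g$. On each such piece $g$ is $r$-monotone and $\overline f$ is a polynomial of degree $\le r-1$, so the identity $L_{\gv}(h_g)=L_{\gv}(g)$ recorded just before the statement shows that $h_g$ is $r$-monotone on every piece of its own refinement.

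The analytic heart is a lemma asserting that an $r$-monotone function $\phi$ on an interval has at most $r$ sign changes, equivalently that $\{\phi>0\}$ is a union of at most $\PES{(r+1)/2}$ subintervals. I would prove it by induction on $r$ from the divided-difference characterisation of $r$-monotonicity (the determinants $L_{\gv}$ are, up to a positive Vandermonde factor, the $r$-th divided differences): since $\{1,x,\dots,x^{r-1}\}$ is a Chebyshev system, an $r$-monotone $\phi$ is generalised convex with respect to it, and a Rolle/variation-diminishing argument excludes $r+1$ strict sign alternations $t_0<\dots<t_{r+1}$, such a configuration forcing some $r$-th divided difference $\phi[t_i,\dots,t_{i+r}]$ to have the wrong sign.

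It then remains to turn the piecewise sign-change structure of $h_g$ into the VC bound, and this is where the precise constant must be extracted. The decomposition $(r+1)k+r(K-1)=(r+1)+(r+1)(k-1)+r(K-1)$ suggests an incremental estimate read off the refinement: a base contribution $r+1$ for the first piece, an extra $r+1$ across each of the $k-1$ boundaries where $g$ changes monotonicity type, but only an extra $r$ across each of the $K-1$ boundaries interior to a single monotonicity interval of $g$ (where $\overline f$ merely switches polynomial). The mechanism is that across such a polynomial boundary $g$ stays $r$-monotone of one fixed type, which rigidifies the admissible sign patterns of $h_g$ and makes that boundary cheaper than a genuine monotonicity boundary. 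I would formalise this by induction on the refinement pieces, splitting $\cX$ at a boundary and bounding the trace of $\cA_>(\overline f)$ on $N$ ordered points, showing that the number of realisable patterns stays below $2^N$ once $N$ exceeds the claimed value.

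The main obstacle is precisely this final combinatorial step: the naive estimate ``VC-dimension $\le$ (maximal number of sign changes of $h_g$)$+1$'' is \emph{not} sharp — already for $r=1$, $k=1$, $K=2$ it yields $4$ whereas the true degree is $3$ — because shattering $N$ points requires realising \emph{both} alternating patterns $+,-,+,\dots$ and $-,+,-,\dots$ at once, and the one-directional nature of an $r$-monotone $h_g$ (the signs of its first and last runs being tied to whether $g$ is $r$-nondecreasing or $r$-nonincreasing) forbids one of them. Capturing this asymmetry — counting the up-crossings and down-crossings of $\1_{\{h_g>0\}}$ separately and tracking how each of the two boundary types constrains them — is the delicate part, and is exactly what produces the sharp constant $(r+1)k+r(K-1)$ in place of a cruder multiple of $k+K-1$.
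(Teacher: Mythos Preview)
Your reduction by the symmetry $g\mapsto -g$ and your key analytic ingredient are exactly those of the paper: the paper's Lemma~\ref{lem1-VC} is precisely the asymmetric sign-change statement you isolate (for an $r$-nondecreasing $\phi$ the alternating pattern with $\phi(v_{0})>0\Leftrightarrow 0\equiv r+1$ is forbidden, the reversed pattern being forbidden for $r$-nonincreasing). You also correctly diagnose why the naive bound ``VC $\le$ (max sign changes of $h_{g}$)$+1$'' overshoots and that the directional asymmetry is the source of the sharp constant.

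Where your proposal has a genuine gap is the combinatorial closure. The scheme ``induction on the refinement pieces, counting up- and down-crossings separately'' is not carried out, and it is not clear how to run it uniformly in $g$: the refinement has at most $k+K-1$ pieces, but the pieces themselves move with $g$, so an induction that splits at a boundary of the refinement is really an induction over a $g$-dependent decomposition. To bound the VC-dimension you must exhibit, for any $N>(r+1)k+r(K-1)$ ordered points, one subset that no $g\in\cF$ realises; your pattern-counting plan would instead need a uniform upper bound on the number of realisable traces, and you have not said how the varying refinements are controlled simultaneously.

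The paper avoids this difficulty by a direct witness construction. It fixes the points $z_{1}<\ldots<z_{n}$ with $n>(r+1)k+r(K-1)$, looks only at the partition $\cI$ of $\overline f$ (which does \emph{not} depend on $g$), keeps the intervals $I_{1},\ldots,I_{L}\in\cI$ containing at least $r+1$ of the $z_{i}$, and assigns to the points in each $I_{l}$ alternating signs $\eps(z_{i})=\pm 1$, with the rule that the first sign in $I_{l+1}$ equals $(-1)^{r+1}$ times the last sign in $I_{l}$. The candidate set is $C=\{z:\eps(z)=+1\}$. For a hypothetical $g$ with $\{h_{g}>0\}\cap\cZ=C$, a pigeonhole count on the (now $g$-dependent) refinement forces some $J$ from $g$'s partition to contain, within intervals of $\cI$, either $r+2$ consecutive points (immediately contradicting Lemma~\ref{lem1-VC}) or two blocks of exactly $r+1$ points in adjacent $I_{l},I_{l+1}$; the $(-1)^{r+1}$ linking rule then makes the asymmetric conclusion of Lemma~\ref{lem1-VC} fail on one of the two blocks regardless of whether $g$ is $r$-nondecreasing or $r$-nonincreasing on $J$. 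This explicit $C$ is exactly the device that encodes the ``cheaper across an $\overline f$-boundary'' phenomenon you anticipate, without any induction or trace counting.
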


Let us now turn to the proof of this result.
Hereafter, we write $i\equiv j$ when the integers $i,j$ share the same parity and $i\not \equiv j$ otherwise.
\begin{lem}\label{lem1-VC}
Let $r$ be a positive integer and $v_{0}<\ldots<v_{r}$ be $r+1$ points in a nontrivial interval $I$. There exists no $r$-nondecreasing function $f$ on $I$ that satisfies
\begin{equation}\label{eq-lem1-VC}
f(v_{i})>0\quad \text{for $i\equiv (r+1)$ while for $i\not\equiv (r+1)$}\quad f(v_{i})\le 0
\end{equation}
and there exists no $r$-nonincreasing function $f$ on $I$ that satisfies
\begin{equation}\label{eq-lem1-VCb}
f(v_{i})\le 0\quad \text{for $i\equiv (r+1)$ while for $i\not\equiv (r+1)$}\quad f(v_{i})> 0.
\end{equation}
\end{lem}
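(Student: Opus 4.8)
The plan is to evaluate the determinant $L_{\gv}(f)$ of \eref{eq-condphi1} at the prescribed points $v_{0}<\dots<v_{r}$ by a cofactor expansion along its last column, and then to read off a definite sign from the prescribed sign pattern of the $f(v_{i})$. Arguing by contradiction, suppose an $r$-nondecreasing $f$ satisfies \eref{eq-lem1-VC}. Expanding the $(r+1)\times(r+1)$ determinant along the column $(f(v_{0}),\dots,f(v_{r}))\tr$ gives
\[
L_{\gv}(f)=\sum_{i=0}^{r}(-1)^{i+r}f(v_{i})\,\Delta_{i},
\]
where $\Delta_{i}$ is the minor obtained by deleting the row indexed by $v_{i}$ together with the last column. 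Each $\Delta_{i}$ is the Vandermonde determinant of the $r$ remaining points $\{v_{j}:j\neq i\}$, taken in increasing order, with the columns $1,v,\dots,v^{r-1}$; hence $\Delta_{i}=\prod_{j<k,\;j,k\neq i}(v_{k}-v_{j})>0$ since the $v_{j}$ are distinct and increasing.

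The next step is to align the cofactor sign with the parity condition. The relation $i\equiv(r+1)$ means $i+r+1$ is even, i.e.\ $(-1)^{i+r}=-1$, whereas $i\not\equiv(r+1)$ gives $(-1)^{i+r}=+1$. Under \eref{eq-lem1-VC}, when $i\equiv(r+1)$ we have $(-1)^{i+r}=-1$ and $f(v_{i})>0$, so the summand $(-1)^{i+r}f(v_{i})\Delta_{i}$ is strictly negative; when $i\not\equiv(r+1)$ we have $(-1)^{i+r}=+1$ and $f(v_{i})\le 0$, so the summand is $\le 0$. Because $\{0,\dots,r\}$ contains an index of each parity whenever $r\ge 1$, at least one summand is strictly negative, so $L_{\gv}(f)<0$. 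This contradicts the defining inequality \eref{eq-condphi1} of an $r$-nondecreasing function evaluated at $v_{0}<\dots<v_{r}$, proving the first assertion.

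For the second assertion the argument is entirely symmetric. Using the same expansion, suppose an $r$-nonincreasing $f$ satisfies \eref{eq-lem1-VCb}. Then for $i\equiv(r+1)$ one has $(-1)^{i+r}=-1$ and $f(v_{i})\le 0$, giving a summand $\ge 0$, while for $i\not\equiv(r+1)$ one has $(-1)^{i+r}=+1$ and $f(v_{i})>0$, giving a summand $>0$; in particular $i=r$ contributes $(-1)^{2r}f(v_{r})\Delta_{r}>0$. Hence $L_{\gv}(f)>0$, contradicting the inequality $L_{\gv}(f)\le 0$ required by \eref{eq-condphi0}. (One could instead apply the first part to $-f$, but this requires tracking how the strict and non-strict inequalities get exchanged, so the direct computation is cleaner.)

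The only delicate point—and the natural place for a sign error to creep in—is the bookkeeping that matches the cofactor sign $(-1)^{i+r}$ against the parity condition $i\equiv(r+1)$; once that is pinned down, the proof rests solely on the strict positivity of the Vandermonde minors and on the observation that both parities are represented among $0,\dots,r$.
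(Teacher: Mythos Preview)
Your proof is correct and follows essentially the same approach as the paper: expand $L_{\gv}(f)$ along its last column, observe that the minors are positive Vandermonde determinants, and read off the sign from the parity pattern. The paper's proof is terser (it simply writes the expansion and splits the sum according to $i\equiv r+1$ or not, leaving the contradiction implicit), but your argument is the same with the bookkeeping spelled out carefully.
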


\begin{proof}
The result is a consequence of the fact that
\begin{align*}
L_{\gv}(f)&=\left|\begin{array}{ccccc}
1 & v_{0}&\ldots & v_{0}^{r-1}& f(v_{0}) \\
\vdots & \vdots&\ldots & \vdots& \vdots \\
1 & v_{r}&\ldots & v_{r}^{r-1}& f(v_{r}) \\
\end{array}\right|=\sum_{i=0}^{r}(-1)^{r-i}V_{i}f(v_{i})\\
&=\sum_{i=0}^{r}V_{i}f(v_{i})\1_{i\not \equiv r+1}-\sum_{i=0}^{r}V_{i}f(v_{i})\1_{i\equiv r+1}
\end{align*}
where $V_{0},\ldots,V_{r}$ are positive Vandermonde determinants.
\end{proof}

Note that it is sufficient to prove the Theorem~\ref{thm-VC} for the larger class $\cF_{r,k}$, which we shall do hereafter by taking $\cF=\cF_{r,k}$. Let $n> r(k+K-1)+k$ and $\cZ=\{z_{1},\ldots,z_{n}\}$ be an arbitrary subset of $\cX$ with cardinality $n$. We shall prove that there exists a subset $C$ of $\cZ$ such that
\begin{equation}\label{eq-thmVC00}
C\ne \{f-\overline f>0\}\cap \cZ\quad \text{for every $f\in\cF$}.
\end{equation}
This would imply that the VC-dimension of $\{\{f>\overline f\},\; f\in\cF\}$ is not larger than $r(k+K-1)+k$.

Using the fact that $\cF=\cF_{r,k}$ is stable with respect to the transformation $f\mapsto -f$ and applying this result to the function $-\overline f$, which satisfies the same assumptions as $\overline f$, we obtain that the VC-dimension $\{\{f<\overline f\},\; f\in\cF\}$ is not larger than $r(k+K-1)+k$ as well, hence that $\overline f$ is extremal in $\cF$ with degree not larger than $r(k+K-1)+k$.

With no loss of generality we may assume that $z_{1}<\ldots<z_{n}$. Given an interval $I\subset \cX$, we denote by $N(I)$ the number of these $z_{i}$ that belong to $I$.

Let $\cI$ denote the partition of $\cX$ with cardinality not larger than $K\ge 1$ for which the restriction of $\overline f$ to each interval $I\in\cI$ is a polynomial with degree not larger than $r-1$. Since $n\ge rK+r(k-1)+k\ge rK+1$, there exists at least one interval $I\in\cI$ which contains at least $r+1$ points. Let $I_{1},\ldots,I_{L}$ with $L\ge 1$ be the collection of these intervals which contain at least $r+1$ points. When $L\ge 2$, we assume that the labelling of these sets is such that the right endpoint of $I_{l}$ is not larger than the left endpoint of $I_{l+1}$ for $l\in\{1,\ldots,L-1\}$. Denoting by $\cZ(\overline f)$ the set $\bigcup_{l=1}^{L}\pa{\cZ\cap I_{l}}$,
we associate with the points $z\in \cZ(\overline f)$ a sign $\eps(z)\in\{-1,1\}$ which is defined as follows. Firstly, if $z=z_{i}\in I_{1}$ then $\eps(z_{i})=(-1)^{i}$. Then, if $L\ge 2$ and $l\in\{2,\ldots,L\}$ we define iteratively the signs of the points $z_{i}$ in $I_{l}$ by the formula $\eps(z_{i+1})=-\eps(z_{i})$ where the sign of the smallest point $z_{i}$ in $I_{l}$ is $(-1)^{r+1}$ times that of the largest point $z_{i}$ in $I_{l-1}$. This means that the sequence of signs associated with the elements $z_{i}$ in $I_{l}$ starts with the same sign as that of last point $z_{i}$ in $I_{l-1}$ when $r$ is odd or by the opposite sign when $r$ is even. Then the signs alternate along the sequence of $z_{i}$ for those $z_{i}\in I_{l}$. For example, when $L=3$, $N(I_{1})=4$, $N(I_{2})=N(I_{3})=3$ and the index $i$ of the smallest point $z_{i}$ in $I_{1}$ is even, we obtain that the sequences of signs that are associated with the points $z_{i}$ in $I_{1},I_{2}$ and $I_{3}$ are respectively given by $(+1,-1,+1,-1), (-(-1)^{r+1},(-1)^{r+1},-(-1)^{r+1}),(-1,1,-1)$. Finally, we define our set $C$ as
\[
C=\ac{z\in \cZ(\overline f),\; \eps(z)=1}.
\]

Let us assume by contradiction that there exists an element $f$ in $\cF$ for which equality holds in \eref{eq-thmVC00} that is
\begin{equation}\label{eq-thmVC01}
z\in\cZ(\overline f)\quad \text{and}\quad \eps(z)=+1\iff z\in\cZ(\overline f)\quad \text{and}\quad (f-\overline f)(z)>0.
\end{equation}
The function $f$ is associated with the partition $\cJ$ of $\cX$ into at most $k$ nontrivial intervals. The class of sets $\{I\cap J\ne \vide, (I,J)\in \cI\times \cJ\}$ is therefore a partition of $\cX$ into at most $k+K-1$ intervals. For $J\in \cJ$, let us denote by $D_{J}\ge 1$ the number of intervals $I\in\cI$ such that
$I\cap J\neq \vide$. Since $\sum_{J\in \cJ}D_{J}\le k+K-1$ and
\[
\sum_{J\in \cJ}\cro{N(J)-\pa{rD_{J}+1)}}\ge n-r\pa{k+K-1}-k>0,
\]
there exists at least one interval $J\in \cJ$ such that $N(J)\ge rD_{J}+2$. Two situations may then occur
\begin{itemize}
\item[(a)] there exists an interval $I_{l}$ with $l\in\{1,\ldots,L\}$ such that $I_{l}\cap J$ contains at least $r+2$ points;
\item[(b)] there exist two intervals $I_{l}$ and $I_{l'}$ with $l<l'$, $l,l'\in\{1,\ldots,L\}$, such that $I_{l}\cap J$ and $I_{l'}\cap J$ contain exactly $r+1$ points.
\end{itemize}
We shall prove that these two situations lead to a contradiction. Hereafter, we shall repeatedly use the following Property (P) that derives from \eref{eq-thmVC01}, Lemma~\ref{lem1-VC} and the fact $f-\overline f$ shares the same $r$-monotonicity as $f$.

{\bf Property (P)}: Let $I\cap J$, with $I\in\cI$ and $J\in\cJ$, be an interval which contains $r+1$ consecutive points $z_{m},\ldots, z_{m+r}$ for some $m\in\N$. If $f$ is $r$-nondecreasing on $I\cap J$, $\eps(z_{m})=(-1)^{r}$ while $\eps(z_{m})=(-1)^{r+1}$ when $f$ is $r$-nonincreasing on $I\cap J$.

As a consequence, it is impossible that $I\cap J$ contains more than $r+1$ points: if it contained $r+2$ consecutive points $z_{m},\ldots,z_{m+r+1}$, the Property (P) would be violated either by the sequence of $v_{0}=z_{m},\ldots,v_{r}=z_{m+r}$ or $v_{0}=z_{m+1},\ldots,v_{r}=z_{m+r+1}$. In particular, the case $(a)$ is impossible.

Let us now turn to the case $(b)$. If there exists $l''\in\{1,\ldots,L\}$ such that $l<l''<l'$, then $I_{l''}\subset J$ and $I_{l''}\cap J$ contains exactly $r+1$ points. As a consequence, with no loss of generality, we may assume that $l'=l+1$. Let $z_{m},\ldots,z_{m+r}$ be the $r+1$ elements of $\cZ \cap I_{l}\cap J$ and $z_{m'},\ldots,z_{m'+r}$ the $r+1$ elements of $\cZ\cap I_{l+1}\cap J$. We note that $z_{m},\ldots,z_{m+r}$ are necessarily the $r+1$ largest elements $z_{i}$ in $I_{l}$ while $z_{m'},\ldots,z_{m'+r}$ are the $r+1$ smallest elements $z_{i}$
in $I_{l+1}$. It follows from our construction that $\eps(z_{m'})=(-1)^{r+1}\eps(z_{m+r})$. If $f$ is $r$-nondecreasing on $J$, it is $r$-nondecreasing on both $I_{l}\cap J$ and $I_{l+1}\cap J$. It follows from Property (P) that $\eps(z_{m})=(-1)^{r}$, hence $\eps(z_{m+r})=+1$ and $\eps(z_{m'})=(-1)^{r+1}$. The property (P) is then violated on the interval $I_{l+1}\cap J$ with the points $z_{m'},\ldots,z_{m'+r}$. In the opposite, if $f$ is $r$-nondecreasing on $J$, we necessarily have $\eps(z_{m})=(-1)^{r+1}$, hence $\eps(z_{m+r})=-1$ and $\eps(z_{m'})=(-1)^{r}$ which is impossible by Property (P). The existence of a function $f$ in $\cF$ that satisfies \eref{eq-thmVC01} is therefore impossible. This concludes the proof of Theorem~\ref{thm-VC}.

\section{The proof of Theorem~\ref{thm-main1}}\label{sect-pfTh1}
Let $D\in\{1,\ldots,n\}$ be an integer such that $\cO(D)$ is not empty. For $f,g\in\cF$, we recall that $T(\bsZ,f,g)=\sum_{i=1}^{n}\pa{f(x_{i,n})-Y_{i,n}}\sgn(f,g,x_{i,n})$ and we set $\xi_{i,n}=Y_{i,n}-f\et(x_{i,n})=Y_{i,n}-\E\cro{Y_{i,n}}$ for all $i\in\{1,\ldots,n\}$, 
\begin{align*}
\Lambda(\gx,f,g)&=\E\cro{T(\bsZ,f,g)}=\sum_{i=1}^{n}\pa{f(x_{i,n})-f_{i}\et(x_{i,n})}\sgn(f,g,x_{i,n})\\
\widetilde{T}_{+}(\bsZ,f,g)&=T(\bsZ,f,g)-\E\cro{T(\bsZ,f,g)}=T(\bsZ,f,g)-\Lambda(\gx,f,g)\\
\widetilde{T}_{-}(\bsZ,f,g)&=\E\cro{T(\bsZ,f,g)}-T(\bsZ,f,g)=\Lambda(\gx,f,g)-T(\bsZ,f,g).
\end{align*}
With this notation, for $f,g\in\cF$   
\begin{align}
\widetilde{T}_{+}(\bsZ,f,g)&=\sum_{i=1}^{n}\pa{f(x_{i,n})-Y_{i,n}}\sgn(f,g,x_{i,n}) -\sum_{i=1}^{n}\pa{f(x_{i,n})-f_{i}\et(x_{i,n})}\sgn(f,g,x_{i,n})\nonumber\\
&=\sum_{i=1}^{n}\pa{f_{i}\et(x_{i,n})-Y_{i,n}}\sgn(f,g,x_{i,n})=\sum_{i=1}^{n}(-\xi_{i,n})\,\sgn(f,g,x_{i,n}).\label{eq-pfmain00}
\end{align}
By arguing similarly, we get 
\begin{align}
\widetilde{T}_{-}(\bsZ,f,g)&=\sum_{i=1}^{n}\xi_{i,n}\sgn(f,g,x_{i,n}).\label{eq-pfmain01}
\end{align}
It follows from the definitions of $\widetilde{T}_{+}(\bsZ,\cdot,\cdot )$ and $\widetilde{T}_{-}(\bsZ,\cdot,\cdot )$ and  Lemma~\ref{lem01} that for any $f \in\cF$ 
\begin{align*}
&\Lambda(\gx,f,\overline f)\\
&=\widetilde{T}_{-}(\bsZ,f,\overline f )+T(\bsZ,f,\overline f )\le \widetilde{T}_{-}(\bsZ,f,\overline f )+T(\bsZ,f)-T(\bsZ,\overline f)+T(\bsZ,\overline f)\\
&=\widetilde{T}_{-}(\bsZ,f,\overline f )+T(\bsZ,f)-T(\bsZ,\overline f)+\sup_{g\in\cF}T(\bsZ,\overline f,g)\\
&=\widetilde{T}_{-}(\bsZ,f,\overline f )+T(\bsZ,f)-T(\bsZ,\overline f)+\sup_{g\in\cF}\cro{\widetilde T_{+}(\bsZ,\overline f,g)+\E\cro{T(\bsZ,\overline f,g)}}\\
&\le \widetilde{T}_{-}(\bsZ,f,\overline f )+T(\bsZ,f)-T(\bsZ,\overline f) +\sup_{g\in\cF}\cro{\widetilde T_{+}(\bsZ,\overline f,g)+n\ell(f\et,\overline f)}.
\end{align*}
Consequently, for all $f\in\cF$
\begin{align}
\Lambda(\gx,f,\overline f)&\le n\ell(f\et,\overline f)+ T(\bsZ,f)-T(\bsZ,\overline f)+\sup_{g\in\cF}\widetilde{T}_{-}(\bsZ,g,\overline f )+\sup_{g\in\cF}\widetilde T_{+}(\bsZ,\overline f,g).\label{eq-pfmain12}
\end{align}
Applying Lemma~\ref{lem01} to $g=\overline f$ we deduce from \eref{eq-pfmain12} that  for any $f\in\cF$
\begin{align*}
n\ell(f,\overline f)&\le n\ell(f\et,\overline f)+\E\cro{\gT(\bsZ,f,\overline f)}=n\ell(f\et,\overline f)+\Lambda(\gx,f,\overline f)\\
&\le 2n\ell(f\et,\overline f)+T(\bsZ,f)-T(\bsZ,\overline f)+\sup_{g\in\cF}\widetilde{T}_{-}(\bsZ,g,\overline f )+\sup_{g\in\cF}\widetilde T_{+}(\bsZ,\overline f,g).
\end{align*}
In particular, this last inequality applies to $f=\widehat f\in\cE_{\frc}(\bsZ,\cF)$.  Since by definition of $\cE_{\frc}(\bsZ,\cF)$, $T(\bsZ,\widehat f)\le \inf_{g\in\cF}T(\bsZ,g)+\frc\le T(\bsZ,\overline f)+\frc$, we obtain that 
\begin{align}
n\ell(\widehat f,\overline f)&\le  2n\ell(f\et,\overline f)+\frc+\sup_{g\in\cF}\widetilde{T}_{-}(\bsZ,g,\overline f )+\sup_{g\in\cF}\widetilde T_{+}(\bsZ,\overline f,g).\label{eq-pfmain22}
\end{align}
By taking the expectation on both sides, we conclude by using the following lemma.
\begin{lem}\label{lem-T}
Let $D\in\{1,\ldots,n\}$ and $\overline f\in\cO(D)$. The exists a numerical constant $\kappa>0$ such that 
\begin{equation}\label{eq-lem-T02}
\E\cro{\sup_{g\in\cF}\widetilde{T}_{+}(\bsZ,\overline f,g)}\vee \E\cro{\sup_{g\in\cF}\widetilde{T}_{-}(\bsZ,g,\overline f)}\le  \frac{\kappa n}{2}\inf_{p\in [1,2]}\cro{\sigma_{p}(\gx)\pa{\frac{D}{n}}^{1-1/p}}.
\end{equation}
\end{lem}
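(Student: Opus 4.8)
My plan is to reduce the statement to a single maximal inequality for a signed sum indexed by a VC class, and then to establish that inequality by truncation, symmetrisation and chaining. First I would note that the two quantities inside the $\vee$ are in fact equal. Since $\sgn(g,\overline f,x)=-\sgn(\overline f,g,x)$, equations \eref{eq-pfmain00} and \eref{eq-pfmain01} give, for every $g\in\cF$,
\[
\widetilde{T}_{+}(\bsZ,\overline f,g)=\widetilde{T}_{-}(\bsZ,g,\overline f)=-\sum_{i=1}^{n}\xi_{i,n}\,\sgn(\overline f,g,x_{i,n}),
\]
so it suffices to bound the common expectation. Setting $A_{>}(g)=\ac{i:\ g(x_{i,n})>\overline f(x_{i,n})}$ and $A_{<}(g)=\ac{i:\ g(x_{i,n})<\overline f(x_{i,n})}$, the right-hand side equals $\sum_{i\in A_{>}(g)}\xi_{i,n}-\sum_{i\in A_{<}(g)}\xi_{i,n}$. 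The index sets $A_{>}(g)$ and $A_{<}(g)$ are exactly the elements of the families $\cA_{>}(\pi_{n}\overline f)$ and $\cA_{<}(\pi_{n}\overline f)$ of Definition~\ref{def-extremal}, identified with subsets of $\ac{1,\dots,n}$; since $\overline f\in\cO(D)$ these families have VC-dimension at most $D$, and being classes of subsets of a finite set they are finite, so every supremum over $g$ is a genuine maximum and no measurability question arises. Taking suprema and expectations,
\[
\E\cro{\sup_{g\in\cF}\widetilde{T}_{+}(\bsZ,\overline f,g)}\le \E\cro{\sup_{A\in\cA_{>}(\pi_{n}\overline f)}\sum_{i\in A}\xi_{i,n}}+\E\cro{\sup_{A\in\cA_{<}(\pi_{n}\overline f)}\sum_{i\in A}(-\xi_{i,n})}.
\]

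The core step I would isolate is the following claim: if $\zeta_{1},\dots,\zeta_{n}$ are independent and centred with $M:=\sum_{i=1}^{n}\E\cro{\ab{\zeta_{i}}^{p}}<+\infty$ for some $p\in[1,2]$, and $\cA$ is a class of subsets of $\ac{1,\dots,n}$ of VC-dimension at most $D$, then $\E\cro{\sup_{A\in\cA}\sum_{i\in A}\zeta_{i}}\le C\,D^{1-1/p}M^{1/p}$ for a numerical constant $C$. Applied to $\zeta_{i}=\pm\xi_{i,n}$, for which $M=\sum_{i}\E\ab{\xi_{i,n}}^{p}=n\sigma_{p}^{p}(\gx)$, each term above is at most $C\sigma_{p}(\gx)\,n\,(D/n)^{1-1/p}$; summing, taking the infimum over $p\in[1,2]$ (the values of $p$ with $\sigma_{p}(\gx)=+\infty$ being trivial) and adjusting $\kappa$ then yields \eref{eq-lem-T02}. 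The case $p=1$ of the claim is immediate from $\sup_{A}\sum_{i\in A}\zeta_{i}\le\sum_{i}\ab{\zeta_{i}}$, so I would concentrate on $p\in(1,2]$ and proceed by truncation at a level $t>0$ to be optimised. Splitting $\zeta_{i}=\zeta_{i}^{(1)}+\zeta_{i}^{(2)}$ with $\zeta_{i}^{(1)}=\zeta_{i}\1_{\ab{\zeta_{i}}\le t}-\E\cro{\zeta_{i}\1_{\ab{\zeta_{i}}\le t}}$ and $\zeta_{i}^{(2)}=\zeta_{i}\1_{\ab{\zeta_{i}}>t}-\E\cro{\zeta_{i}\1_{\ab{\zeta_{i}}>t}}$ (both centred and summing to $\zeta_{i}$ since $\E\cro{\zeta_{i}}=0$), the heavy-tailed part is controlled crudely by
\[
\E\cro{\sup_{A\in\cA}\sum_{i\in A}\zeta_{i}^{(2)}}\le \sum_{i=1}^{n}\E\ab{\zeta_{i}^{(2)}}\le 2\sum_{i=1}^{n}\E\cro{\ab{\zeta_{i}}\1_{\ab{\zeta_{i}}>t}}\le 2t^{1-p}M,
\]
where I use $\ab{\zeta_{i}}\1_{\ab{\zeta_{i}}>t}\le t^{1-p}\ab{\zeta_{i}}^{p}$.

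For the bounded part, each $\zeta_{i}^{(1)}$ is centred with $\ab{\zeta_{i}^{(1)}}\le 2t$ and $\Var(\zeta_{i}^{(1)})\le\E\cro{\zeta_{i}^{2}\1_{\ab{\zeta_{i}}\le t}}\le t^{2-p}\E\ab{\zeta_{i}}^{p}$, so that $V:=\sum_{i}\Var(\zeta_{i}^{(1)})\le t^{2-p}M$. By the one-sided symmetrisation inequality, and then, conditionally on the $\zeta_{i}^{(1)}$, Dudley's entropy bound applied to the sub-Gaussian Rademacher process $A\mapsto\sum_{i\in A}\epsilon_{i}\zeta_{i}^{(1)}$ indexed by the VC class $\cA$, I would obtain
\[
\E\cro{\sup_{A\in\cA}\sum_{i\in A}\zeta_{i}^{(1)}}\le 2\,\E\cro{\sup_{A\in\cA}\sum_{i\in A}\epsilon_{i}\zeta_{i}^{(1)}}\le C\sqrt{D}\;\E\cro{\Big(\sum_{i=1}^{n}(\zeta_{i}^{(1)})^{2}\Big)^{1/2}}\le C\sqrt{DV}\le C\sqrt{D}\,t^{1-p/2}\sqrt{M}.
\]
Combining the two parts and choosing $t=(M/D)^{1/p}$ balances the two contributions, each becoming of order $D^{1-1/p}M^{1/p}$, which proves the claim.

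The routine ingredients are the reduction, the tail estimate and the symmetrisation; the step I expect to be the main obstacle is the conditional chaining bound, namely that over a class of subsets of $\ac{1,\dots,n}$ of VC-dimension at most $D$ the entropy integral produces the factor $\sqrt{D}$ with no spurious logarithmic factor in $n$. I would derive this from Haussler's bound on the $L_{2}(\mu)$-covering numbers of a VC class, of the form $\log N(\epsilon,\cA,L_{2}(\mu))\le C D\log(1/\epsilon)$ for $\epsilon\in(0,1)$ (with $\mu$ the measure giving mass $(\zeta_{i}^{(1)})^{2}$ to $i$), together with Dudley's inequality: since $\int_{0}^{1}\sqrt{\log(1/\epsilon)}\,d\epsilon<+\infty$, the integral contributes only a constant multiple of $\sqrt{D}$ times the diameter $(\sum_{i}(\zeta_{i}^{(1)})^{2})^{1/2}$, which is then bounded in expectation by $\sqrt{V}$ through Jensen's inequality. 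The absence of a logarithmic factor at this point is precisely what makes the final oracle bound \eref{eq-risk-z} log-free.
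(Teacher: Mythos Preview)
Your proposal is correct and follows essentially the same route as the paper: reduction to a supremum over a VC class of index sets, symmetrisation, conditional Dudley chaining with the VC covering-number bound to get the log-free $\sqrt{D}$ factor in the $p=2$ case, and truncation to pass to $p\in(1,2)$. The only cosmetic differences are that the paper symmetrises with an independent copy and truncates $\xi_{i}-\xi_{i}'$ rather than centring the truncated pieces as you do, and it bounds $\ab{\sum_{i\in I}\xi_{i}}$ rather than the one-sided sup; your observation that $\widetilde T_{+}(\bsZ,\overline f,g)=\widetilde T_{-}(\bsZ,g,\overline f)$ is a small simplification over the paper's ``argue similarly''.
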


\begin{proof}[Proof of Lemma~\ref{lem-T}]
We only prove the result for $\widetilde{T}_{+}$ since that for $\widetilde{T}_{-}$ can be established by arguing in the same manner.  Let us first note that for all $g\in\cF$,
\begin{align*}
\widetilde{T}_{+}(\bsZ,\overline f,g)&=-\sum_{i=1}^{n}\xi_{i,n}\sgn(\overline f,g,x_{i,n})=-\sum_{i=1}^{n}\xi_{i,n}\pa{\1_{\overline f>g}(x_{i,n})-\1_{\overline f< g}(x_{i,n})}\\
&\le \ab{\sum_{i=1}^{n}\xi_{i,n}\1_{\overline f< g}(x_{i,n})}+\ab{\sum_{i=1}^{n}\xi_{i,n}\1_{\overline f>g}(x_{i,n})}.
\end{align*}
Since $\overline f$ belongs to $\cO(D)$, the VC-dimensions of those of classes of subsets of $S=\{1,\ldots,n\}$ given by 
\[
\ac{\{i\in S,\; \overline f(x_{i,n})>g(x_{i,n})\}, g\in \cF}\quad \text{and}\quad \ac{\{i\in S,\; \overline f(x_{i,n})<g(x_{i,n})\}, g\in \cF}
\]
are not larger than $D$. It is therefore sufficient to show that if $\xi_{1,n},\ldots,\xi_{n,n}$ are independent centred random variables that satisfy
\[
\sigma_{p}^{p}=\frac{1}{n}\sum_{i=1}^{n}\E\cro{|\xi_{i,n}|^{p}}<+\infty\quad \text{for some $p\in [1,2]$}
\]
and if $\cI$ is a class of subsets of $S$ with dimension not larger than $d\in\{1,\ldots,n\}$, 
\begin{equation}\label{eq-cL1}
\E\cro{\sup_{I\subset \cI}\ab{\sum_{i\in I}\xi_{i,n}}}\le \frac{n\kappa}{4} \sigma_{p}\pa{\frac{d}{n}}^{1-1/p}.
\end{equation}
Let us first prove the result under the assumption that $\sigma_{2}<+\infty$. Hereafter $\nu_{n}$ denotes the uniform distribution on $S$ and $C,C'$ positive numerical constants that may vary from line to line. 
Since the $\xi_{i,n}$ are centred, by a classical symmetrisation argument (see Lemma 6.3 in Ledoux and Talagrand~\citeyearpar{MR2814399}) 
\begin{equation}\label{eq-sympf}
\E\cro{\sup_{I\in\cI}\ab{\sum_{i\in I}\xi_{i,n}}}\le 2\E\cro{\sup_{I\in\cI}\ab{\sum_{i\in I}\eps_{i}\xi_{i,n}}}=2\E\cro{\E_{\eps}\cro{\sup_{I\in\cI}\ab{\sum_{i\in I}\eps_{i}\xi_{i,n}}}}
\end{equation}
where $\etc{\eps}$ are  i.i.d.\ Rademacher random variables which are independent of the $\xi_{i,n}$. We denote by $\E_{\eps}$ the conditional expectation given $\xi_{1,n},\ldots,\xi_{n,n}$. Let us now fix the values $\xi_{1,n},\ldots,\xi_{n,n}$ assuming that they are not all equal to 0. We define the class of functions $\cG$ on $S$ by 
\[
g_{I}(i)=\frac{\xi_{i,n}}{\sqrt{n^{-1}\sum_{j=1}^{n}\xi_{j,n}^{2}}}\1_{I}(i)\quad \text{for $i\in S$ and $I\in\cI$.}
\]
We note that for every $I,I'\in\cI$,  $\norm{g_{I}}_{\L_{2}(\nu_{n})}^{2}=n^{-1}\sum_{i=1}^{n}g_{I}^{2}(i)\le 1$ and 
\begin{align*}
\frac{1}{n}\sum_{i=1}^{n}\ab{g_{I}(i)-g_{I'}(i)}^{2}=\sum_{i=1}^{n}\pa{\1_{I}(i)-\1_{I'}(i)}^{2}\frac{\xi_{i,n}^{2}}{\sum_{j=1}^{n}\xi_{j,n}^{2}}=\int_{S}\pa{\1_{I}-\1_{I'}}^{2}dQ
\end{align*}
where $Q$ is the probability measure on $S$ defined by $Q(i)=\xi_{i,n}^{2}/\sum_{j=1}^{n}\xi_{j,n}^{2}$ for all $i\in S$. We deduce that the diameter of $\cG$ with respect to the $\L_{2}(\nu_{n})$-distance is not larger than 1 and that the minimal number of $\L_{2}(\nu_{n})$-balls with radii $\eta\in (0,1)$ to cover $\cG$ is the same as the minimal number of $\L_{2}(Q)$-balls with the same radii that are necessary to  cover $\{\1_{I},\; I\in\cI\}$. By Theorem~2.6.4 in van der Vaart and Wellner~\citeyearpar{MR1385671}, we know that for every $\eta\in (0,1)$, this number is not larger than 
\[
N(\eta)\le C(d+1)\pa{\frac{4e}{\eta}}^{2d}\quad \text{for every $\eta>0$.}
\]
Setting 
\[
R_{n}(g_{I})=\frac{1}{n}\sum_{i=1}^{n}\eps_{i}g_{I}(i)=\frac{1}{n}\sum_{i\in I}\eps_{i}\frac{\xi_{i,n}}{\sqrt{n^{-1}\sum_{j=1}^{n}\xi_{j,n}^{2}}}\quad \text{and}\quad \norm{R_{n}}_{\cG}=\sup_{g\in\cG}\ab{R_{n}(g)}
\]
we derive from Theorem~3.11 in Koltchinskii~\citeyearpar{Koltchinski} that 
\[
\E_{\eps}\cro{\norm{R_{n}}_{\cG}}\le \frac{C}{\sqrt{n}}\int_{0}^{1}\sqrt{\log N(\eta)}d\eta\le C'\sqrt{\frac{d}{n}}.
\]
We deduce that 
\[
\E_{\eps}\cro{\sup_{I\in \cI}\ab{\sum_{i\in I}\eps_{i}\xi_{i,n}}}\le C\sqrt{nd}\pa{\frac{1}{n}\sum_{i=1}^{n}\xi_{i,n}^{2}}^{1/2}
\]
and note that this inequality remains true when the $\xi_{i,n}$ are all zero. By taking the expectation on both sides with respect to the $\xi_{i,n}$ and by using Jensen's inequality we get 
\begin{align*}
\E\cro{\sup_{I\in \cI}\ab{\sum_{i\in I}\eps_{i}\xi_{i,n}}}\le C\sqrt{nd}\pa{\frac{1}{n}\sum_{i=1}^{n}\E\cro{\xi_{i,n}^{2}}}^{1/2}=C\sigma_{2}\sqrt{nd}.
\end{align*}
We derive \eref{eq-cL1} from~\eref{eq-sympf} when $p=2$. Let us now turn to the case $p\in [1,2)$.

For $p=1$, we use the crude bound 
\begin{equation}\label{eq-pfLem-T00b}
\E\cro{\sup_{I\in\cI}\ab{\sum_{i\in\cI}\xi_{i,n}}}\le \E\cro{\sum_{i=1}^{n}\ab{\xi_{i,n}}}=n\sigma_{1}.
\end{equation}
For $p\in (1,2)$, we use a truncation argument as follows.  Let $\bs{\xi}'=(\xi_{1}',\ldots, \xi_{n}')$ be an independent copy of $\bs{\xi}=(\xi_{1},\ldots,\xi_{n})$. For every $a>0$, 
\begin{align*}
\E\cro{\sup_{I\in\cI}\ab{\sum_{i\in\cI}\xi_{i,n}}}&=\E\cro{\sup_{I\in\cI}\ab{\sum_{i\in\cI}\xi_{i,n}-\E\cro{\xi_{i,n}'}}}\le \E\cro{\sup_{I\in\cI}\ab{\sum_{i\in\cI}\pa{\xi_{i,n}-\xi_{i,n}'}}}\\
&\le \E\cro{\sup_{I\in\cI}\ab{\sum_{i\in\cI}\pa{\xi_{i,n}-\xi_{i,n}'}\1_{|\xi_{i,n}-\xi'_{i}|\le a}}}+\E\cro{\sup_{I\in\cI}\ab{\sum_{i\in\cI}\pa{\xi_{i,n}-\xi_{i,n}'}\1_{|\xi_{i,n}-\xi'_{i}|> a}}}.
\end{align*}
Using that  $\E\cro{|\xi_{i,n}-\xi_{i,n}'|^{p}}\le 2^{p}\E\cro{|\xi_{i,n}|^{p}}$ for all $i\in\{1,\ldots,n\}$,
\begin{align}
\E\cro{\sup_{I\in\cI}\ab{\sum_{i\in\cI}\pa{\xi_{i,n}-\xi_{i,n}'}\1_{|\xi_{i,n}-\xi'_{i}|> a}}}&\le \E\cro{\sum_{i=1}^{n}\ab{\xi_{i,n}-\xi_{i,n}'}\1_{|\xi_{i,n}-\xi_{i,n}'|>a}}\le\frac{n2^{p}\sigma_{p}^{p}}{a^{p-1}}.\label{eq-tronc1}
\end{align}
The random variables $(\xi_{1}-\xi_{1}')\1_{|\xi_{1}-\xi_{1}'|\le a},\ldots,(\xi_{n}-\xi_{n}')\1_{|\xi_{n}-\xi_{n}'|\le a}$ are centred and satisfy 
\begin{align*}
\sum_{i=1}^{n}\E\cro{|\xi_{i,n}-\xi_{i,n}'|^{2}\1_{|\xi_{i,n}-\xi_{i,n}'|\le a}}\le a^{2-p}\sum_{i=1}^{n}\E\cro{|\xi_{i,n}-\xi_{i,n}'|^{p}}\le a^{2-p}n2^{p}\sigma_{p}^{p}.
\end{align*}
Applying \eref{eq-cL1} with $p=2$ to these random variables we obtain that
\begin{align}
\E\cro{\sup_{I\in\cI}\ab{\sum_{i\in I}\pa{\xi_{i,n}-\xi_{i,n}'}\1_{|\xi_{i,n}-\xi_{i,n}'|\le a}}}\le  Ca^{1-p/2}2^{p/2}\sigma_{p}^{p/2}\sqrt{nd}.\label{eq-tronc2}
\end{align}
We deduce from \eref{eq-tronc1} and \eref{eq-tronc2} that for every $a>0$, 
\begin{align*}
&\E\cro{\sup_{I\in\cI}\ab{\sum_{i\in\cI}\xi_{i,n}}}\le \frac{n2^{p}\sigma_{p}^{p}}{a^{p-1}}+Ca^{1-p/2}2^{p/2}\sigma_{p}^{p/2}\sqrt{nd}
\end{align*}
and the result follows with by taking $a=(2\sigma_{p})(n/d)^{1/p}$.
\end{proof}

\section{The other proofs}\label{Sect-OP}

\subsection{Preliminary results}
\begin{lem}\label{lem-approxMoy}
Let $f$ be a function on a finite set $\cX$ with mean $\overline f=|\cX|^{-1}\sum_{x\in \cX}f(x)$. Then, 
\[
\frac{1}{|\cX|}\sum_{x\in \cX}\ab{f(x)-\overline f}\le \frac{V(f)}{2}\quad \text{with}\quad V(f)=\max_{x\in\cX}f(x)-\min_{x\in\cX}f(x).
\]
\end{lem}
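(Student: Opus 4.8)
The plan is to exploit the convexity of the map $t\mapsto\ab{t-\overline f}$ together with the elementary fact that the mean $\overline f$ lies between the extreme values of $f$. Write $m=\min_{x\in\cX}f(x)$ and $M=\max_{x\in\cX}f(x)$, so that $V(f)=M-m$; the case $M=m$ is trivial, since then $f$ is constant and both sides vanish, so I assume $M>m$. Since $m\le\overline f\le M$, the function $\phi\colon t\mapsto\ab{t-\overline f}$ satisfies $\phi(m)=\overline f-m$ and $\phi(M)=M-\overline f$.

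The key step is a pointwise chord bound. Because $\phi$ is convex, its restriction to $[m,M]$ lies below the chord joining $(m,\phi(m))$ to $(M,\phi(M))$, that is, for every $t\in[m,M]$,
\[
\ab{t-\overline f}\le\frac{(M-t)(\overline f-m)+(t-m)(M-\overline f)}{M-m}.
\]
I would apply this with $t=f(x)$ and sum over $x\in\cX$. The two resulting sums are explicit: writing $N=|\cX|$, the definition of $\overline f$ gives $\sum_{x\in\cX}(M-f(x))=N(M-\overline f)$ and $\sum_{x\in\cX}(f(x)-m)=N(\overline f-m)$. Substituting yields
\[
\sum_{x\in\cX}\ab{f(x)-\overline f}\le\frac{2N(M-\overline f)(\overline f-m)}{M-m}.
\]

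It then remains to bound the product $(M-\overline f)(\overline f-m)$. Since these two nonnegative quantities sum to $M-m$, the arithmetic–geometric mean inequality yields $(M-\overline f)(\overline f-m)\le\cro{(M-m)/2}^{2}$, and plugging this in collapses the right-hand side to $N(M-m)/2$; dividing by $N$ gives the claim. I do not anticipate a genuine obstacle here. The only point requiring a moment's care is recognising that the chord bound is what turns the awkward absolute values into two linear sums that telescope against the definition of the mean: a more naive split of $\cX$ into $\{f\ge\overline f\}$ and $\{f<\overline f\}$ also works, but leads to a clumsier optimisation, whereas the convexity route makes the factor $1/2$ appear transparently through the arithmetic–geometric mean inequality.
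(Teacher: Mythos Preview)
Your proof is correct and clean. It takes a genuinely different route from the paper's. The paper proceeds exactly by the ``naive split'' you mention in your final paragraph: it partitions $\cX$ into $I_{+}=\{f\ge\overline f\}$ and $I_{-}=\{f<\overline f\}$, computes $\sum_{x}|f(x)-\overline f|$ directly in terms of $\lambda=|I_{+}|/|\cX|$, and after some algebra arrives at the intermediate bound $2\lambda(1-\lambda)V(f)$, which is then controlled by $V(f)/2$ via $\lambda(1-\lambda)\le 1/4$. Your convexity/chord argument bypasses the case split entirely and produces instead the intermediate bound $2(M-\overline f)(\overline f-m)/(M-m)$, with the factor $1/2$ emerging from AM--GM. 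Both proofs are short, but yours is arguably more transparent: the chord inequality makes the linearisation automatic, whereas the paper's computation requires tracking the cross-terms carefully. The paper's bound $2\lambda(1-\lambda)V(f)$ is a slightly different (and incomparable) refinement, depending on how balanced the split is rather than on where $\overline f$ sits in $[m,M]$.
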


\begin{proof}
Let $I_{+}=\{i\in\{1,\ldots,n\}, f(x_{i,n})\ge \overline f\}$ and $I_{-}=\{i\in\{1,\ldots,n\}, f(x_{i,n})<\overline f\}$. Using the convention $\sum_{\vide}=0$, 
\begin{align*}
\sum_{i=1}^{n}\ab{f(x_{i,n})-\overline f}&=\sum_{i\in I_{+}}\pa{f(x_{i,n})-\overline f}+\sum_{i\in I_{-}}\pa{\overline f-f(x_{i,n})}\\
&=\sum_{i\in I_{+}}f(x_{i,n})-\sum_{i\in I_{-}}f(x_{i,n})-\frac{|I_{+}|-|I_{-}|}{n}\pa{\sum_{i\in I_{+}}f(x_{i,n})+\sum_{i\in I_{-}}f(x_{i,n})}\\
&=\sum_{i\in I_{+}}f(x_{i,n})\pa{1-\frac{|I_{+}|-|I_{-}|}{n}}-\sum_{i\in I_{-}}f(x_{i,n})\pa{1+\frac{|I_{+}|-|I_{-}|}{n}}
\end{align*}
Setting $\lambda=|I_{+}|/n$, hence $1-\lambda=|I_{-}|/n$, $(|I_{+}|-|I_{-}|)/n=2\lambda-1$ and 
\begin{align*}
\frac{1}{n}\sum_{i=1}^{n}\ab{f(x_{i,n})-\overline f}&=\frac{2(1-\lambda)}{n}\sum_{i\in I_{+}}f(x_{i,n})-\frac{2\lambda}{n}\sum_{i\in I_{-}}f(x_{i,n})\\
&\le 2\lambda(1-\lambda)\max_{x\in\cX}f(x_{i,n})-2\lambda(1-\lambda)\min_{x\in\cX}f(x_{i,n})\\
&= 2\lambda(1-\lambda)V(f)\le \frac{V}{2}.
\end{align*}
\end{proof}

\begin{lem}\label{lem-approxM}
Let $f$ be a monotone function on the interval $\cX$. For each $K\ge 1$ there exists a partition of $\cX$ into at most $K\wedge n$ consecutive intervals and a function $\overline f$ which is constant on each of these intervals, shares the same monotonicity as $f$ and satisfies
\[
\ell(f,\overline f)\le \pa{\frac{1}{K}+\frac{1}{n}}\frac{\ab{f(x_{1,n})-f(x_{n,n})}}{2}\le \frac{\ab{f(x_{1,n})-f(x_{n,n})}}{K}.
\]
Besides, $\ell(f,\overline f)=0$ as soon as $K\ge n$.
\end{lem}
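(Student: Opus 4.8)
The plan is to reduce to the nondecreasing case and then approximate $f$ by block averages over a partition of the design points into nearly equal consecutive groups, controlling the error through Lemma~\ref{lem-approxMoy}. First I would assume without loss of generality that $f$ is nondecreasing on $\cX$: replacing $f$ by $-f$ leaves both $\ell(f,\overline f)$ and $\ab{f(x_{1,n})-f(x_{n,n})}$ unchanged and merely swaps the two monotonicity types, so a proof in the nondecreasing case transfers verbatim. Write $V=f(x_{n,n})-f(x_{1,n})\ge 0$ and set $m=K\wedge n$. Since the points are labelled so that $x_{1,n}\le\cdots\le x_{n,n}$, I would split the index set $\{1,\ldots,n\}$ into $m$ consecutive blocks $B_1,\ldots,B_m$ whose cardinalities differ by at most one, so that $\ab{B_j}\le\lceil n/m\rceil$ for every $j$. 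On the portion of $\cX$ carrying the indices of $B_j$ I define $\overline f$ to be the constant $\overline f_j=\ab{B_j}^{-1}\sum_{i\in B_j}f(x_{i,n})$, which produces a function that is constant on each of at most $m=K\wedge n$ consecutive intervals.

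Next I would check the monotonicity of $\overline f$. Because the blocks are consecutive in the index order and $f$ is nondecreasing, the largest value of $f$ on $B_j$ is at most its smallest value on $B_{j+1}$, whence $\overline f_j\le\overline f_{j+1}$; thus $\overline f$ is nondecreasing on $\cX$ and shares the monotonicity of $f$.

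For the error bound I would apply Lemma~\ref{lem-approxMoy} to the restriction of $i\mapsto f(x_{i,n})$ to each finite block $B_j$, which gives $\sum_{i\in B_j}\ab{f(x_{i,n})-\overline f_j}\le \ab{B_j}V_j/2$ with $V_j=\max_{i\in B_j}f(x_{i,n})-\min_{i\in B_j}f(x_{i,n})$. Summing over $j$ and factoring out the uniform size bound,
\[
\ell(f,\overline f)=\frac1n\sum_{j=1}^m\sum_{i\in B_j}\ab{f(x_{i,n})-\overline f_j}\le\frac{1}{2n}\sum_{j=1}^m\ab{B_j}V_j\le\frac{\lceil n/m\rceil}{2n}\sum_{j=1}^m V_j.
\]
The decisive step is the telescoping estimate $\sum_{j=1}^m V_j\le V$: since the blocks are consecutive, the smallest index of $B_{j+1}$ is one more than the largest index of $B_j$, and nondecreasingness of $f$ makes the cross-block gaps nonnegative, so the within-block oscillations collapse to $f(x_{n,n})-f(x_{1,n})=V$. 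Hence $\ell(f,\overline f)\le\lceil n/m\rceil\,V/(2n)$.

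Finally I would read off the two regimes. When $K\le n$ we have $m=K$ and $\lceil n/K\rceil\le n/K+1$, giving $\ell(f,\overline f)\le(1/K+1/n)V/2$, and the second inequality $(1/K+1/n)V/2\le V/K$ then follows from $1/n\le 1/K$. When $K\ge n$ we have $m=n$, every block is a singleton, each $V_j=0$, and therefore $\ell(f,\overline f)=0$. The only genuinely delicate point is the \emph{simultaneous} use of the equal-size partition (to pull $\lceil n/m\rceil$ out of the sum) and of consecutiveness together with monotonicity (to telescope $\sum_j V_j$): decoupling these two features is exactly what produces the precise $1/K+1/n$ shape of the constant, whereas a naive range-based partition would yield a different bound and would not give the exact vanishing at $K\ge n$.
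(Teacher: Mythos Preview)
Your proof is correct and follows essentially the same approach as the paper: reduce to the nondecreasing case, partition the index set into $K$ (or $n$) consecutive blocks of nearly equal size, take block averages, apply Lemma~\ref{lem-approxMoy} on each block, and telescope the oscillations $\sum_j V_j\le V$ using monotonicity. The only cosmetic differences are that the paper writes $n=Kq+r$ and bounds block sizes by $q+1\le (n+K)/K$, whereas you phrase this via $\lceil n/m\rceil\le n/K+1$; these are the same inequality.
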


\begin{proof}
We may assume that $f$ is nondecreasing (otherwise it suffices to change $f$ into $-f$). Hence, $f(x_{1,n})\le f(x_{2})\ldots\le f(x_{n,n})$. If $K\ge n$, the function 
\[
\overline f=f(x_{1,n})\1_{(-\infty, x_{1,n}]}+\sum_{i=2}^{n-1}f(x_{i,n})\1_{(x_{i-1},x_{i,n}]}+f(x_{n,n})\1_{(x_{n-1},+\infty)\cap \cX}
\]
which satisfies $\overline f(x_{i,n})=f(x_{i,n})$ for all $i\in \{1,\ldots,n\}$ suits. Otherwise $K\in\{1,\ldots,n-1\}$ and there exist $q,r\in\N$ such that $n=Kq+r$ with $r\in\{0,\ldots,K-1\}$. We partition $\{1,\ldots,n\}$ as follows. For $k\in \{1,\ldots,K-r\}$ we set 
$I_{k}=\{(k-1)q+1,\ldots,kq\}$ and if $r\ne 0$ we complete the partition with $I_{K-r+l}=\{(K-r)q+(l-1)(q+1)+1,\ldots,(K-r)q+l(q+1)\}$ for $l\in\{1,\ldots,r\}$. This results in a partition $I_{1},\ldots,I_{K}$ of $\{1,\ldots,n\}$ such that $1\le q\le |I_{k}|\le q+1$ for all $k\in\{1,\ldots,K\}$. We may then find a partition of $\cX$ into $K$ intervals $\cX_{1},\ldots,\cX_{K}$ such that $\cX_{k}\cap\{x_{1,n},\ldots,x_{n,n}\}=\{x_{i,n},\; i\in I_{k}\}$ for $k\in\{1,\ldots,K\}$. For $k\in\{1,\ldots,K\}$, we set $\overline f_{k}=|I_{k}|^{-1}\sum_{i\in I_{k}}f(x_{i,n})$ and $\overline f=\sum_{k=1}^{K}\overline f_{k}\1_{\cX_{k}}$. Since $f$ is nondecreasing, $\overline f_{1}\le \ldots\le \overline f_{K}$ and $\overline f$ is therefore nondecreasing too. If follows from Lemma~\ref{lem-approxMoy} that 
\begin{align*}
\sum_{i=1}^{n}\ab{f(x_{i,n})-\overline f(x_{i,n})}=\sum_{k=1}^{K}\sum_{i\in I_{k}}\ab{f(x_{i,n})-\overline f_{k}}\le \frac{1}{2}\sum_{k=1}^{K}|I_{k}|\pa{\max_{i\in I_{k}}f(x_{i,n})-\min_{i\in I_{k}}f(x_{i,n})}.
\end{align*}
Using the inequalities $f(x_{1,n})\le \min_{i\in I_{1}}f(x_{i,n})\le \max_{i\in I_{K}}f(x_{i,n})\le f(x_{n,n})$ and for $K>1$, $\max_{i\in I_{k}}f(x_{i,n})\le \min_{i\in I_{k+1}}f(x_{i,n})$ for $k\in\{1,\ldots,K-1\}$, we obtain that 
\[
\sum_{i=1}^{n}\ab{f(x_{i,n})-\overline f(x_{i,n})}\le \frac{q+1}{2}\sum_{k=1}^{K}\pa{\max_{i\in I_{k}}f(x_{i,n})-\min_{i\in I_{k}}f(x_{i,n})}\le \frac{(q+1)(f(x_{n,n})-f(x_{1,n}))}{2}.
\]
Since  $q+1\le (n+K)/K$, we conclude that 
\[
\frac{1}{n}\sum_{i=1}^{n}\ab{f(x_{i,n})-\overline f(x_{i,n})}\le \pa{\frac{1}{K}+\frac{1}{n}}\frac{f(x_{n,n})-f(x_{1,n})}{2}.
\]
\end{proof}

\begin{lem}\label{lem-w}
Let $\mu$ be a Borel measure on $\R$ with modulus $w$ and $f$ a nonnegative concave function on $[a,b]$ bounded by $B>0$. Then 
\[
\int_{[a,b]}f(x)d\mu(x)\le Bw\pa{\frac{1}{B}\int_{a}^{b}f(x)dx}.
\]
\end{lem}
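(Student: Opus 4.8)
The plan is to reduce the estimate to a one-dimensional Jensen inequality via the layer-cake (Cavalieri) representation of both integrals, using in an essential way that a concave function has interval superlevel sets.

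First I would record the layer-cake identities. Since $f$ is nonnegative and bounded by $B$, for any Borel measure $m$ on $\R$ one has
\[
\int_{[a,b]} f\,dm=\int_{0}^{B} m\pa{\ac{x\in [a,b]:\ f(x)>t}}\,dt,
\]
the upper limit being $B$ because $\ac{f>t}$ is empty for $t\ge B$. Applying this with $m=\mu$ and with $m=\lambda$ (Lebesgue measure), and writing $I_{t}=\ac{x\in[a,b]:\ f(x)>t}$, gives
\[
\int_{[a,b]} f\,d\mu=\int_{0}^{B}\mu(I_{t})\,dt
\qquad\text{and}\qquad
\int_{a}^{b} f(x)\,dx=\int_{0}^{B}\lambda(I_{t})\,dt.
\]

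Next I would use concavity. Because $f$ is concave on $[a,b]$, every superlevel set $I_{t}$ is a bounded interval, so the defining inequality of a modulus (Definition~\ref{def_reg_measure}) applies and gives $\mu(I_{t})\le w(\lambda(I_{t}))$ for every $t\in[0,B]$. Substituting this into the first identity and viewing $B^{-1}\,dt$ as a probability measure on $[0,B]$, I would apply Jensen's inequality to the concave function $w$:
\[
\int_{[a,b]} f\,d\mu\le \int_{0}^{B} w(\lambda(I_{t}))\,dt
= B\cdot\frac{1}{B}\int_{0}^{B} w(\lambda(I_{t}))\,dt
\le B\, w\pa{\frac{1}{B}\int_{0}^{B}\lambda(I_{t})\,dt}.
\]
Replacing $\int_{0}^{B}\lambda(I_{t})\,dt$ by $\int_{a}^{b} f(x)\,dx$ through the second identity yields exactly the claimed bound.

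The only genuinely delicate point is the interval structure of the superlevel sets $I_{t}$, which is where concavity of $f$ enters: it is precisely what allows the modulus inequality $\mu(I)\le w(\lambda(I))$, valid only for intervals, to be invoked at each level $t$. Everything else — the two Cavalieri identities and the single use of Jensen — is routine. One should only note that $w$ is defined and concave on $[0,+\infty)$ and that each $I_{t}\subset[a,b]$ is bounded, so both applications are legitimate; atoms of $\mu$ cause no difficulty, since degenerate intervals are covered by $\mu(I)\le w(\lambda(I))$ as well.
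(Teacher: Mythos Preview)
Your proof is correct and follows essentially the same approach as the paper: express both integrals via the layer-cake formula, use concavity of $f$ to ensure the superlevel sets $\{f>t\}$ are intervals so that the modulus inequality $\mu(I_t)\le w(\lambda(I_t))$ applies, and then invoke Jensen's inequality for the concave function $w$ against the uniform probability $B^{-1}\,dt$ on $[0,B]$.
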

\begin{proof}
Since $f$ is concave and bounded by $B$,  the set $\{f>t\}$ for $t\in\R_{+}$ is a subinterval of $[a,b]$ which  is empty when $t\ge B$. Using that $f$ is nonnegative, we get  
\begin{align*}
\int_{[a,b]}f(x)d\mu(x)=\int_{0}^{+\infty}\mu\pa{\{f>t\}}dt=\int_{0}^{B}\mu\pa{\{f>t\}}dt\le \int_{0}^{B}w\pa{\lambda\pa{\{f>t\}}}dt.
\end{align*}
By applying Jensen's inequality, we obtain that  
\begin{align*}
B\cro{\frac{1}{B}\int_{0}^{B}w\pa{\{f>t\}}dt}\le Bw\pa{\frac{1}{B}\int_{0}^{B}\lambda\pa{\{f>t\}}dt}=Bw\pa{\frac{1}{B}\int_{a}^{b}f(x)dx}.
\end{align*}
\end{proof}

\begin{lem}\label{lem_approx_1-intervalle}
Let $[a,b]$ be a nontrivial compact interval, $\mu$ a Borel measure with modulus $w$ and $f$ a continuous, monotone and convex (or concave) function on $[a,b]$. We denote by $l$ its linear interpolant, which is defined by
\[
l:x\mapsto f(a)+\Delta(x-a)\quad \text{for $x\in [a,b]$ with}\quad \Delta=\frac{f(b)-f(a)}{b-a}.
\]
We set 
\begin{align*}
R_{1}&=\frac{\ab{f'_{l}(b)-f'_{r}(a)}(b-a)}{4}\mu((a,b));\\
R_{2}&=\ab{f(b)-f(a)}w\pa{\frac{1}{|f(b)-f(a)|}\int_{a}^{b}\ab{f(x)-l(x)}dx}
\end{align*}
with the convention $R_{2}=0$ when $f(a)=f(b)$, and 
\[
R_{3}=
\begin{cases}
\dps{\frac{\ab{f'_{r}(a)-\Delta}\ab{\Delta-f'_{l}(b)}}{\ab{f'_{r}(a)-f'_{l}(b)}}\pa{b-a}w\pa{\frac{b-a}{2}}}&\text{when $0<\ab{f'_{r}(a)-f'_{l}(b)}<+\infty$}\\
\dps{\ab{f'_{r}(a)-\Delta}\pa{b-a}w\pa{\frac{b-a}{2}}}&\text{when $\ab{f'_{l}(b)}=+\infty$}\\
\dps{\ab{f'_{l}(b)-\Delta}\pa{b-a}w\pa{\frac{b-a}{2}}}&\text{when $\ab{f'_{r}(a)}=+\infty$}\\
0&\text{when $f'_{r}(a)=f'_{l}(b)$.}
\end{cases}
\]
Then, 
\[
\int_{[a,b]}\ab{f(x)-l(x)}d\mu(x)\le \min\ac{R_{1},R_{2},R_{3}}.
\]
\end{lem}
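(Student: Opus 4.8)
The plan is to reduce everything to the single deviation function $g=l-f$, exploit its concavity, and then read off the three bounds as separate estimates built on Lemma~\ref{lem-w}. First I would reduce to the convex case: replacing $f$ by $-f$ turns a concave $f$ into a convex one, changes $l$ into $-l$, leaves $\ab{f-l}$ unchanged and leaves each of $R_1,R_2,R_3$ unchanged (every derivative enters through an absolute value or a symmetric difference), so there is no loss of generality. Assuming $f$ convex, the function $g=l-f$ is continuous, \emph{concave} (affine minus convex), nonnegative (a convex function lies below its chord) and vanishes at the endpoints, with $\int_{[a,b]}\ab{f-l}\,d\mu=\int_{[a,b]}g\,d\mu$. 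Writing $s_1=g'_r(a)=\Delta-f'_r(a)$ and $s_2=g'_l(b)=\Delta-f'_l(b)$, the monotonicity of chord slopes of a convex function gives $f'_r(a)\le\Delta\le f'_l(b)$, hence $s_1\ge0\ge s_2$; concavity then forces $g$ to lie below each of its endpoint tangent lines, i.e.\ $0\le g(x)\le\min\{\ab{s_1}(x-a),\,\ab{s_2}(b-x)\}$ for all $x\in[a,b]$.

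For $R_2$ I would apply Lemma~\ref{lem-w} directly to $g$. Since $f$ is monotone, both $l(x)$ and $f(x)$ stay in the interval with endpoints $f(a),f(b)$, so $0\le g\le V$ with $V=\ab{f(b)-f(a)}$; taking $B=V$ in Lemma~\ref{lem-w} (whose proof only needs $B$ to be an upper bound for $g$) yields $\int g\,d\mu\le V\,w\!\left(V^{-1}\int_a^b g\,dx\right)=R_2$, the degenerate case $f(a)=f(b)$ giving $g\equiv0$ and matching the stated convention. For $R_1$ I would instead bound $g$ in sup-norm: from the tangent-line domination, $\max g$ is at most the height $\frac{\ab{s_1}\ab{s_2}}{\ab{s_1}+\ab{s_2}}(b-a)$ of the associated tent, and AM--GM together with $\ab{s_1}+\ab{s_2}=f'_l(b)-f'_r(a)$ gives $\norm{g}_{\infty}\le\frac{(f'_l(b)-f'_r(a))(b-a)}{4}$. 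Because $g$ vanishes at $a$ and $b$, one has $\int_{[a,b]}g\,d\mu=\int_{(a,b)}g\,d\mu\le\norm{g}_{\infty}\,\mu((a,b))$, which is exactly $R_1$ (and is vacuous when $f'_l(b)-f'_r(a)=+\infty$).

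For $R_3$ I would keep the full tent rather than collapsing to its peak: with $h(x)=\min\{\ab{s_1}(x-a),\ab{s_2}(b-x)\}$ one has $g\le h$, and $h$ is nonnegative concave with peak $H=\frac{\ab{s_1}\ab{s_2}}{\ab{s_1}+\ab{s_2}}(b-a)$ and $\int_a^b h\,dx=\tfrac12(b-a)H$, so that $H^{-1}\int_a^b h\,dx=\frac{b-a}{2}$. Applying Lemma~\ref{lem-w} to $h$ then gives $\int g\,d\mu\le\int h\,d\mu\le H\,w\!\left(\frac{b-a}{2}\right)=R_3$. Combining the three displays and taking the minimum finishes the proof.

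The routine part is checking the edge cases in the definition of $R_3$, where I expect the only real bookkeeping. When one of $f'_r(a),f'_l(b)$ is infinite the tent degenerates into a single ramp $\ab{s_1}(x-a)$ (or $\ab{s_2}(b-x)$), but its area-to-height ratio is still exactly $(b-a)/2$, so the same application of Lemma~\ref{lem-w} reproduces the corresponding line of the formula; when $f'_r(a)=f'_l(b)$ convexity forces $f$ affine, whence $g\equiv0$ and all three bounds vanish. The only genuinely delicate point is justifying the tangent-line domination $g(x)\le\min\{\ab{s_1}(x-a),\ab{s_2}(b-x)\}$ uniformly, including when a one-sided derivative is $+\infty$ (in which case the corresponding constraint is vacuous and the tent becomes a ramp); everything else is a direct invocation of Lemma~\ref{lem-w} or a one-line sup-times-measure estimate.
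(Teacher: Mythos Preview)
Your proposal is correct and follows essentially the same approach as the paper: reduce to a single convexity case, establish the tangent-line (tent) domination of the nonnegative concave deviation $g$, then derive $R_1$ from a sup-times-measure bound and $R_2$, $R_3$ from Lemma~\ref{lem-w} with the appropriate choice of $B$. The only cosmetic differences are that the paper reduces to the concave case (you reduce to the convex one) and, for $R_3$, the paper applies Lemma~\ref{lem-w} to $g$ itself with $B$ equal to the tent peak and then bounds $\int_a^b g\,dx\le (b-a)B/2$, whereas you apply Lemma~\ref{lem-w} directly to the tent majorant $h$; both routes give exactly the same bound.
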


\begin{proof}
Changing $f$ into $-f$ if ever necessary, it suffices to prove the result for a monotone and concave function $f$. The function $f-l$ is therefore nonnegative and concave. Let us start by establishing some uniform upper bounds $B$ on $f-l$. We distinguish between several cases. 

-- When $f'_{r}(a)<+\infty$, $f'_{r}(a)\ge \Delta$ and 
\begin{align*}
0\le f(x)-l(x)&\le f(a)+f'_{r}(a)(x-a)-l(x)=\pa{b-a}\cro{\pa{f'_{r}(a)-\Delta}\frac{x-a}{b-a}}
\end{align*}
hence, 
\[
\sup_{x\in [a,b]}\ab{f(x)-l(x)}=(b-a)\sup_{u\in [0,1]}\pa{f'_{r}(a)-\Delta}u=(b-a)|f'_{r}(a)-\Delta|.
\]

-- When $f'_{l}(b)<+\infty$, $f'_{l}(b)\le \Delta$ and 
\begin{align*}
0\le f(x)-l(x)&\le f(b)-f'_{l}(b)(b-x)-l(x)=\pa{b-a}\pa{\Delta-f'_{l}(b)}\cro{1-\frac{x-a}{b-a}}
\end{align*}
we therefore get 
\[
\sup_{x\in [a,b]}\ab{f(x)-l(x)}=(b-a)\sup_{u\in [0,1]}\pa{\Delta-f'_{l}(b)}\cro{1-u}=(b-a)|\Delta-f'_{l}(b)|.
\]

-- When $f'_{r}(a)$ and $f'_{l}(b)$ are both finite, a refined bound is obtained by using the fact that  for all $x\in [a,b]$
\begin{align*}
f(x)-l(x)&\le \min\ac{f(a)+f'_{r}(a)(x-a),f(b)-f'_{l}(b)(b-x)}-l(x)\\
&=\min\ac{\pa{f'_{r}(a)-\Delta}\pa{x-a},\pa{\Delta-f'_{l}(b)}\pa{b-x}}\\
&=\pa{b-a}\min\ac{\pa{f'_{r}(a)-\Delta}\frac{x-a}{b-a},\pa{\Delta-f'_{l}(b)}\pa{1-\frac{x-a}{b-a}}}
\end{align*}
which leads to 
\begin{align}
\sup_{x\in [a,b]}\pa{f(x)-l(x)}&\le \pa{b-a}\sup_{u\in [0,1]}\min\ac{\pa{f'_{r}(a)-\Delta}u,\pa{\Delta-f'_{l}(b)}(1-u)}\nonumber\\
&=\pa{b-a}\frac{\pa{f'_{r}(a)-\Delta}\pa{\Delta-f'_{l}(b)}}{f'_{r}(a)-f'_{l}(b)}\label{pf-eq-03}
\end{align}
when $f'_{r}(a)\ne f'_{l}(b)$. When $f'_{r}(a)=f'_{l}(b)$, $f=l$ on $[a,b]$ and $\sup_{x\in [a,b]}\pa{f(x)-l(x)}=0$. 

We may therefore choose $B$ as follows: 
\begin{equation}\label{pf-eq-005}
B=
\begin{cases}
(b-a)|f'_{r}(a)-\Delta| &\text{when $|f'_{l}(b)|=+\infty$}\\
(b-a)|\Delta-f'_{l}(b)|&\text{when $|f'_{r}(a)|=+\infty$}\\
\dps{\pa{b-a}\frac{\ab{f'_{r}(a)-\Delta}\ab{\Delta-f'_{l}(b)}}{f'_{r}(a)-f'_{l}(b)}}&\text{when $0<|f'_{l}(b)-f'_{r}(a)|<+\infty$}\\
0 & \text{when $f'_{l}(b)=f'_{r}(a)$}.
\end{cases}
\end{equation}
We also observe that when $0<|f'_{l}(b)-f'_{r}(a)|<+\infty$, $f'_{l}(b)\le \Delta\le f'_{r}(a)$ and
\begin{align}
&\pa{b-a}\frac{\pa{f'_{r}(a)-\Delta}\pa{\Delta-f'_{l}(b)}}{f'_{r}(a)-f'_{l}(b)}\nonumber\\
&=(b-a)\pa{f'_{r}(a)-f'_{l}(b)}\frac{\pa{f'_{r}(a)-\Delta}}{f'_{r}(a)-f'_{l}(b)}\pa{1-\frac{\pa{f'_{r}(a)-\Delta}}{f'_{r}(a)-f'_{l}(b)}}
\end{align}
and we may therefore choose 
\begin{equation}\label{pf-eq-001}
B=(b-a)\pa{f'_{r}(a)-f'_{l}(b)}\sup_{u\in [0,1]}u(1-u)=\frac{(b-a)\pa{f'_{r}(a)-f'_{l}(b)}}{4}.
\end{equation}
Note this bound also holds when $|f'_{l}(b)|+|f'_{r}(a)|=+\infty$ and $f'_{l}(b)=f'_{r}(a)$. 
Finally, since $f$ is monotone on $[a,b]$, for all $x\in [a,b]$
\begin{align*}
0\le f(x)-l(x)&\le \max\{f(a),f(b)\}-\min\{l(a),l(b)\}\\
&=\max\{f(a),f(b)\}-\min\{f(a),f(b)\}=\ab{f(a)-f(b)}
\end{align*}
and we may therefore take 
\begin{equation}\label{pf-eq-004}
B=\ab{f(a)-f(b)}.
\end{equation}

Let us now finish the proof of Lemma~\ref{lem_approx_1-intervalle}. Firstly, since $\ab{f(x)-l(x)}\le B$ for all $x\in [a,b)]$ and $\ab{f(x)-l(x)}=0$ for $x\in\{a,b\}$, 
\begin{equation}\label{pf-eq-002}
\int_{[a,b]}\ab{f(x)-l(x)}d\mu(x)=\int_{(a,b)}\ab{f(x)-l(x)}d\mu(x)\le B\mu((a,b)).
\end{equation}
The inequality $\int_{[a,b]}\ab{f(x)-l(x)}d\mu(x)\le  R_{1}$ follows thus from \eref{pf-eq-001}. 

Since $R_{1}=0$ if and only if $f=l$ on $[a,b]$, it suffices to show that $\int_{[a,b]}\ab{f(x)-l(x)}d\mu(x)\le \min\{R_{2},R_{3}\}$ when $f\ne l$, which implies that the values of $B$ given by \eref{pf-eq-005} and \eref{pf-eq-004} are necessarily positive. We may then apply Lemma~\ref{lem-w} and get that
\begin{equation}\label{pf-eq-003}
\int_{[a,b]}\ab{f(x)-l(x)}d\mu(x)=\int_{[a,b]}\pa{f(x)-l(x)}d\mu(x)\le Bw\pa{\frac{1}{B}\int_{a}^{b}\ab{f(x)-l(x)}dx}.
\end{equation}
This inequality applied to $B$ given by \eref{pf-eq-004} which leads to  $\int_{[a,b]}\ab{f(x)-l(x)}d\mu(x)\le R_{2}$. 

Let $c$ be an arbitrary point in $[a,b]$. When $f'_{r}(a)$ and $f'_{l}(b)$ are both finite, $f$ lies under the graphs of $x\mapsto f(a)+f'_{r}(a)(x-a)$ and $x\mapsto f(b)-f'_{l}(b)(b-x)$ and consequently, 
\begin{align*}
\int_{a}^{b}|f(x)-l(x)|dx&=\int_{a}^{b}\pa{f(x)-l(x)}dx=\int_{a}^{c}\pa{f(x)-l(x)}dx+\int_{c}^{b}\pa{f(x)-l(x)}dx\\
&\le \pa{f'_{r}(a)-\Delta}\int_{a}^{c}\pa{x-a}dx+\pa{\Delta-f'_{l}(b)}\int_{c}^{b}\pa{b-x}dx.
\end{align*}
Since $c$ is arbitrary, 
\begin{align*}
\int_{a}^{b}|f(x)-l(x)|dx&\le \inf_{c\in [a,b]}\cro{\pa{f'_{r}(a)-\Delta}\frac{(c-a)^{2}}{2}+\pa{\Delta-f'_{l}(b)}\frac{(b-c)^{2}}{2}}\\
&=\frac{(b-a)^{2}}{2}\frac{\pa{f'_{r}(a)-\Delta}\pa{\Delta-f'_{l}(b)}}{f'_{r}(a)-f'_{l}(b)}=\frac{(b-a)B}{2}
\end{align*}
where the value of $B$ is given by \eref{pf-eq-005} when  $0<|f'_{l}(b)-f'_{r}(a)|<+\infty$. This bound remains true for the other values of $B$ given by \eref{pf-eq-005} when $|f'_{r}(a)|$ or $|f'_{l}(b)|$ is infinite. It follows from \eref{pf-eq-003} and the fact that $w$ is nondecreasing that for such values of $B$
\[
\int_{a}^{b}|f(x)-l(x)|dx\le Bw\pa{\frac{1}{B}\int_{a}^{b}|f(x)-l(x)|dx}\le Bw\pa{\frac{b-a}{2}}=R_{3}.
\]
\end{proof}

We now introduce the following definition.
\begin{df}
Let $K$ be a positive integer and $f$ be a continuous, monotone and convex-concave function on the compact interval $[a,b]$, $a<b$. We say that $l$ is a $K$-linear interpolation of $f$ on $[a,b]$ if there exists a subdivision $a_{0}=a<a_{1}<\ldots<a_{K}=b$ such that $l$ is affine on $[a_{i-1},a_{i}]$ and coincide with $f$ at $a_{i-1}$ and $a_{i}$, i.e. $f(a_{i-1})=l(a_{i-1})$ and $f(a_{i})=l(a_{i})$,  for all $i\in\{1,\ldots,K\}$.
\end{df}
If $l$ is a linear interpolation of a continuous, monotone and convex-concave function $f$ on $[a,b]$, then $l$ is also continuous, monotone and convex-concave on this interval. In fact, $l$ has the same monotonicity as $f$ and if $f$ is convex (respectively concave) so is $l$.

\subsection{Proof of Theorem~\ref{thm-approx-con}}\label{sect-pf-thmapprox}
%
Let us assume that $f$ is nondecreasing and convex. When this condition is not satisfied we apply the result to 
\[
g(x)=
\begin{cases}
-f(x)&\text{for $x\in [a,b]$ when $f$ is nonincreasing and concave}\\
f(b-x)&\text{for $x\in [0,b-a]$ when $f$ is nonincreasing and convex}\\
-f(b-x)&\text{for $x\in [0,b-a]$ when $f$ is nondecreasing and concave.}\\
\end{cases}
\]
Under our assumption on $f$, $f'_{r}(a)\in [0,+\infty)$. Since the result is clear when $f'_{r}(a)=f'_{l}(b)$ (then $f$ is affine), we assume here after that $f'_{r}(a)\ne f'_{l}(b)$

Let $\delta$ be a number in the interval $(f'_{r}(a),f'_{l}(b))$. Since the function $\phi(x)=f(x)-x\delta$  is convex on $[a,b]$ and satisfies $\phi'_{r}(a)=f'_{r}(a)-\delta<0$ and $\phi'_{l}(b)=f'_{l}(b)-\delta>0$, the minimizer $c$ of $\phi$ belongs to $(a,b)$ and satisfies $\phi_{l}(c)=f'_{l}(c)-\delta\le 0$ and $\phi_{r}(c)=f'_{r}(c)-\delta\ge 0$. Hence, 
\begin{equation}\label{eq-deltac}
f'_{l}(c)\le \delta \le f'_{r}(c).
\end{equation}
It follows from Cauchy-Schwarz inequality that for  every subdivision $t_{0}=a<\ldots<t_{K'}=c$ of $[a,c]$, 
\begin{align}
\sum_{i=1}^{K'}\sqrt{(t_{i}-t_{i-1})\pa{f'_{l}(t_{i})-f'_{r}(t_{i-1})}}&\le \sqrt{(c-a)\sum_{i=1}^{m'}\pa{f'_{l}(t_{i})-f'_{r}(t_{i-1})}}\nonumber\\
&=\sqrt{(c-a)\pa{f'_{l}(c)+\sum_{i=1}^{m'-1}\pa{f'_{l}(t_{i})-f'_{r}(t_{i})}}}\nonumber\\
&\le \sqrt{\pa{f'_{l}(c)-f'_{r}(a)} (c-a)}.\label{eq-contrad}
\end{align}
We build a sequence of points $a_{0}<a_{1}<\ldots$ by induction as follows. Firstly, $a_{0}=a$ and for $i\ge 1$,  we set 
\[
I_{i}=\ac{x\in (a_{i-1},c],\ \sqrt{(x-a_{i-1})\pa{f'_{l}(x)-f'_{r}(a_{i-1})}}\le \frac{ \sqrt{\pa{f'_{l}(c)-f'_{r}(a)}(c-a)}}{K}}.
\]
Then, $a_{i}=\sup I_{i}$ if $a_{i-1}<c$ and $a_{i}=a_{i-1}=c$ otherwise. Since $x\mapsto f'_{l}(x)$ is nondecreasing and left-continuous, so is $x\mapsto \sqrt{(x-a_{i-1})\pa{f'_{l}(x)-f'_{r}(a_{i-1})}}$ and consequently, 
\[
\sqrt{(a_{i}-a_{i-1})\pa{f'_{l}(a_{i})-f'_{r}(a_{i-1})}}\le \frac{ \sqrt{\pa{f'_{l}(c)-f'_{r}(a)} (c-a)}}{K} \quad \text{for all $i\ge 1$.}
\]
Besides if for some $i\ge 1$, $a_{i}<c$  then for any $a_{i}'\in (a_{i},c]$ 
\[
\sqrt{(a_{i}'-a_{i-1})\pa{f'_{l}(a_{i}')-f'_{r}(a_{i-1})}}> \frac{ \sqrt{\pa{f'_{l}(c)-f'_{r}(a) } (c-a)}}{K}.
\]
In particular, if  there exists an integer $k\ge 1$ such that $a=a_{0}<\ldots<a_{k}<c$, by setting 
$a'_{k}=c$ and using the fact that the mapping $x\mapsto f'_{r}(x)$ is right-continuous, we can find $a_{k-1}'\in (a_{k-1},a'_{k})$ such that 
\[
\sqrt{(a_{k}'-a_{k-1}')\pa{f'_{l}(a_{k}')-f'_{r}(a_{k-1}')}}> \frac{ \sqrt{\pa{f'_{l}(c)-f'_{r}(a) } (c-a)}}{K}.
\]
By induction, we obtain a sequence $c=a_{k}'>\ldots>a'_{0}>a$ such that
\[
\sqrt{(a_{i}'-a_{i-1}')\pa{f'_{l}(a_{i}')-f'_{r}(a_{i-1}')}}> \frac{ \sqrt{\pa{f'_{l}(c)-f'_{r}(a) } (c-a)}}{m}\quad \text{for all $i\in\{1,\ldots,k\}$}.
\]
Since the subdivision $a_{0}=a<a_{0}'<\ldots<a'_{k}=c$ of $[a,c]$ satisfies 
\begin{align*}
&\sqrt{(a_{0}'-a)f'_{l}(a_{0}')}+\sum_{i=1}^{k}\sqrt{(a_{i}'-a_{i-1}')\pa{f'_{l}(a_{i}')-f'_{r}(a_{i-1}')}}> \frac{ k\sqrt{\pa{f'_{l}(c)-f'_{r}(a) } (c-a)}}{K},
\end{align*}
in view of \eref{eq-contrad} the number $k$ must be smaller than $K$. We have therefore obtained a subdivision $a_{0}=a<\ldots<a_{K'}=c$ of $[a,c]$ with $K'\le K$ that satisfies
\[
(a_{i}-a_{i-1})\pa{f'_{l}(a_{i})-f'_{r}(a_{i-1})}\le \frac{ \pa{f'_{l}(c)-f'_{r}(a) }(c-a)}{K^{2}} \quad \text{for all $i\in\{1,\ldots,K'\}$.}
\]
Adding $K-K'$ additional points to this subdivision, we may assume hereafter wihout loss of generality that $K'=K$. Let $\overline f_{1}$ be the $K$-linear interpolation of $f$ based on this subdivision. Applying Lemma~\ref{lem_approx_1-intervalle} (the bound with $R_{1}$) we obtain that 
\begin{align}
\int_{[a,c]}\ab{f-\overline f_{1}}dQ&\le \sum_{i=1}^{K}\int_{[a_{i-1},a_{i}]}\ab{f-\overline f_{1}}dQ
\le \frac{\pa{f'_{l}(c)-f'_{r}(a) }(c-a)}{4K^{2}}\sum_{i=1}^{K}Q\pa{(a_{i-1},a_{i})}\nonumber\\
&\le \frac{\pa{f'_{l}(c)-f'_{r}(a) })(c-a)}{4K^{2}}\le \frac{\pa{f'_{l}(c)-f'_{r}(a) })(b-a)}{4K^{2}}.\label{eq-linter1}
\end{align}
We note that when $f'_{l}(b)<+\infty$, we may take $c=b$ in which case inequality \eref{eq-linter1} leads to \eref{eq-approx-con00}.

When $c<b$, we partition $[c,b]$ as follows. Since $f$ is nondecreasing, continuous and satisfies $f(c)\ne f(b)$, there exists a subdivision $b_{0}=c<\ldots<b_{K}=b$ of $[c,b]$ such that  
\[
f\pa{b_{i}}-f(b_{i-1})=\frac{f(b)-f(c)}{K}>0\quad \text{for all $i\in\{1,\ldots,K\}$.}
\]
We denote by $\overline f_{2}$ the $K$-linear interpolation of $f$ on $[c,b]$ which is based on this subdivision and set 
\[
\Delta_{i}=\frac{f(b_{i})-f(b_{i-1})}{b_{i}-b_{i-1}}=\frac{f(b)-f(c)}{K\pa{b_{i}-b_{i-1}}}>0\quad \text{for all $i\in\{1,\ldots,K\}$.}
\]
Applying Lemma~\ref{lem_approx_1-intervalle} (the bound with $R_{3}$), we obtain that for all $i\in\{1,\ldots,K\}$,  
\begin{align*}
\int_{[b_{i-1},b_{i}]}\ab{f-\overline f_{2}}dQ\le \overline R_{i}=\frac{\pa{\Delta_{i}-f'_{r}(b_{i-1})}\pa{f'_{l}(b_{i})-\Delta_{i}}}{f'_{l}(b_{i})-f'_{r}(b_{i-1})}\pa{b_{i}-b_{i-1}}w\pa{\frac{b_{i}-b_{i-1}}{2}}
\end{align*}
with the conventions that $\overline R_{i}=0$ when $f'_{r}(b_{i-1})=f'_{l}(b_{i})$ and 
\[
\overline R_{K}=\pa{\Delta_{K}-f'_{r}(b_{K-1})}\pa{b-b_{K-1}}w\pa{\frac{b-b_{K-1}}{2}}\quad \text{when $f'_{l}(b)=+\infty$.}
\]
It follows from the definition of $c$ and the convexity of $f$ that $f'_{l}(u)$ and $f'_{r}(u)$ are positive for all $u\in [c,b)$, hence when $\overline R_{i}>0$, $i\in\{1,\ldots,K\}$, we may write that
\begin{align*}
B_{i}&=\frac{\pa{\Delta_{i}-f'_{r}(b_{i-1})}\pa{f'_{l}(b_{i})-\Delta_{i}}}{f'_{l}(b_{i})-f'_{r}(b_{i-1})}
=\Delta_{i}^{2}\frac{\pa{\Delta_{i}^{-1}-(f'_{l}(b_{i}))^{-1}}\pa{\Delta_{i}^{-1}-(f'_{r}(b_{i-1}))^{-1}}}{(f'_{r}(b_{i-1}))^{-1}-(f'_{l}(b_{i}))^{-1}}\\
&\le \Delta_{i}^{2}\cro{\pa{\frac{1}{\Delta_{i}}-\frac{1}{f'_{l}(b_{i})}}\wedge \pa{\frac{1}{f'_{r}(b_{i-1})}-\frac{1}{\Delta_{i}}}}\le  \Delta_{i}^{2}\cro{\frac{1}{\Delta_{i}}\wedge \pa{\frac{1}{f'_{r}(b_{i-1})}-\frac{1}{\Delta_{i}}}}
\end{align*}
with the convention $1/f'_{l}(b)=0$ when $f'_{l}(b)=+\infty$. Setting $\psi(u)=\overline w(u)/u$ for all $u>0$, we obtain that for all $i\in\{1,\ldots,K\}$
\begin{align*}
\overline R_{i}&\le B_{i}\pa{b_{i}-b_{i-1}}w\pa{\frac{b_{i}-b_{i-1}}{2}}=B_{i}(b_{i}-b_{i-1})w(0)+\frac{B_{i}\pa{b_{i}-b_{i-1}}^{2}}{2}\psi\pa{\frac{b_{i}-b_{i-1}}{2}}\\
&\le \Delta_{i}(b_{i}-b_{i-1})w(0)+\frac{\Delta_{i}^{2}\pa{b_{i}-b_{i-1}}^{2}}{2}\psi\pa{\frac{b_{i}-b_{i-1}}{2}} \pa{\frac{1}{f'_{r}(b_{i-1})}-\frac{1}{\Delta_{i}}}\\
&=\frac{f(b)-f(c)}{K}w(0)+\frac{\pa{f(b)-f(c)}^{2}}{2K^{2}}\psi\pa{\frac{b_{i}-b_{i-1}}{2}} \pa{\frac{1}{f'_{r}(b_{i-1})}-\frac{1}{\Delta_{i}}}\\
&=\frac{f(b)-f(c)}{K}w(0)+\frac{\pa{f(b)-f(c)}^{2}}{2K^{2}}\psi\pa{\frac{f(b)-f(c)}{2K\Delta_{i}}} \pa{\frac{1}{f'_{r}(b_{i-1})}-\frac{1}{\Delta_{i}}}.
\end{align*}
Since $w$ is concave, $\psi$ is nonincreasing which implies that for all $i\in\{1,\ldots,K\}$
\begin{align*}
\overline R_{i}&\le\frac{f(b)-f(c)}{K}w(0)+\frac{\pa{f(b)-f(c)}^{2}}{2K^{2}}\int_{1/\Delta_{i}}^{1/f'_{r}(b_{i-1})}\psi\pa{\frac{(f(b)-f(c))t}{2K}} dt\\
&\le \frac{f(b)-f(c)}{K}w(0)+\frac{\pa{f(b)-f(c)}^{2}}{2K^{2}}\int_{1/f'_{l}(b_{i})}^{1/f'_{r}(b_{i-1})}\psi\pa{\frac{(f(b)-f(c))t}{2K}} dt
\end{align*}
where we have used the fact that $ 1/\Delta_{i}\ge 1/f'_{l}(b_{i})$ for $i\in\{1,\ldots,K\}$.
Since   $1/f'_{l}(b_{i})\ge 1/f'_{r}(b_{i})$ for $i\in\{1,\ldots,K-1\}$, by summing over $i\in\{1,\ldots,K\}$ we get 
\begin{align}
\int_{[c,b]}\ab{f-\overline f_{2}}dQ&\le \pa{f(b)-f(c)}w(0)+\frac{\pa{f(b)-f(c)}^{2}}{2K^{2}}\int_{1/f'_{l}(b)}^{1/f'_{r}(c)}\psi\pa{\frac{(f(b)-f(c))t}{2K}} dt\nonumber\\
&\le \pa{f(b)-f(a)}w(0)+\frac{\pa{f(b)-f(a)}^{2}}{2K^{2}}\int_{1/f'_{l}(b)}^{1/f'_{r}(c)}\psi\pa{\frac{(f(b)-f(a))t}{2K}} dt\nonumber\\
&=Vw(0)+\frac{V}{K}\int_{V/[2K f'_{l}(b)]}^{V/[2K f'_{r}(c)]}\psi\pa{s} ds\label{eq-linter2}.
\end{align}
When $f'_{r}(a)>0$, we may take $c=a$ in which case this bound would remains true.
Using  \eref{eq-deltac}, \eref{eq-linter1}and \eref{eq-linter2} we obtain that for every $\delta\in [f'_{r}(a),f'_{l}(b)]\cap(0,+\infty)$, there exists a ($2K$)-linear interpolation $\overline f=\overline f_{1}\1_{[a,c]}+\overline f_{2}\1_{[c,b]}$ of $[a,b]$  (depending on $\delta$)  such that  
\begin{align*}
\int_{[a,b]}\ab{f-\overline f}dQ
&=Vw(0)+ \frac{\pa{\delta-f'_{r}(a)}(b-a)}{4K^{2}} +\frac{V}{K}\cro{\Psi\pa{\frac{V}{2K\delta}}-\Psi\pa{\frac{V}{2Kf'_{l}(b)}}},
\end{align*}
which is \eref{eq-approx-con01}.

Let us now assume  $w(u)=w(0)+A[u/(b-a)]^{\alpha}$ with $A\ge 1$ and $\alpha\in [0,1]$.  
If $V=0$, $f$ is constant on $[a,b]$ and the inequality is satisfied for $\overline f=f$. Let us now assume that $V>0$. When $\alpha>0$, $\Psi(x)=A(x/[\alpha(b-a)])^{\alpha}$ and we set 
\[
\delta\et=\frac{V}{b-a}\pa{\frac{2^{1-\alpha}A}{\alpha}K^{1-\alpha}}^{1/(1+\alpha)}\ge \frac{V}{b-a}\ge \underline \delta\ge 0. 
\]
When $\delta\et\le \overline \delta$, we we may choose $\delta=\delta\et>0$ and get  
\begin{align}
\int_{[a,b]}\ab{f-\overline f}dQ&\le Vw(0)+ \frac{\pa{\delta-\underline \delta}(b-a)}{4K^{2}}+\frac{V}{K}\cro{\Psi\pa{\frac{V}{2K\delta}}-\Psi\pa{\frac{V}{2K\overline \delta}}}\nonumber\\
&\le Vw(0)+ \frac{\pa{\delta-\underline \delta}(b-a)}{2K^{2}}+\frac{V}{K}\times \frac{A V^{\alpha}}{\alpha (b-a)^{\alpha}2^{\alpha}K^{\alpha}}\pa{\frac{1}{\delta^{\alpha}}-\frac{1}{\overline \delta^{\alpha}}}\nonumber\\
&=Vw(0)+2V\pa{\frac{A}{\alpha 4^{\alpha}K^{1+3\alpha}}}^{\frac{1}{1+\alpha}}-\frac{\underline \delta(b-a)}{2K^{2}}-\frac{A V^{\alpha+1}}{\alpha (b-a)^{\alpha}2^{\alpha}K^{1+\alpha}}\frac{1}{\overline \delta^{\alpha}}\label{eq-refined}\\
&\le Vw(0)+2V\pa{\frac{A}{\alpha 4^{\alpha}K^{1+3\alpha}}}^{\frac{1}{1+\alpha}}= Vw(0)+V\pa{\frac{A2^{1-\alpha}}{\alpha K^{1+3\alpha}}}^{\frac{1}{1+\alpha}}.\label{eq-crude}
\end{align}
When $\delta\et> \overline \delta$, then $ \overline \delta<+\infty$ and we may take $\delta=\overline \delta$ which leads to 
\[
\int_{[a,b]}\ab{f-\overline f}dQ\le Vw(0)+ \frac{\pa{\overline \delta-\underline \delta}(b-a)}{4K^{2}}\le Vw(0)+V\pa{\frac{A2^{1-\alpha}}{\alpha K^{1+3\alpha}}}^{\frac{1}{1+\alpha}}.
\]
This complete the proof of \eref{eq-approx-con}. When $\alpha=1=A$, $\delta\et=V/(b-a) \in [\underline \delta,\overline \delta]\cap (0,+\infty)$ and \eref{eq-approx-con2} derives from~\eref{eq-refined}.

\subsection{Proof of Proposition~\ref{prop-approxMono}}\label{sect-pf-prop-approxMono}
There exists a partition $\cJ$ of $\cX$ into at most $k$ nontrivial intervals $J$ on which $f$ is monotone. We distinguish between two cases. 

-- Case $V_{\cJ}=V_{\cJ}(\gx,f)=0$. Then  $n_{J}V_{J}=0$ for all $J\in\cJ$ and $f$ is therefore constant on each element $J$ for which $n_{J}>0$. The function $\overline f$ which takes the value $\inf_{x\in J}f(x)$ on $J$ belongs to $\cO_{\text{pc}}(K)$ with $K\le k$ and it satisfies $\ell(f,\overline f)=0$, hence~\eref{eq-approxcm}.

-- Case $V_{\cJ}>0$. For $J\in\cJ$, let us define
\[
K_{J}=\PES{\gamma\sqrt{\frac{n_{J}}{n}\frac{V_{J}}{V_{\cJ}}}}\le \gamma\sqrt{\frac{n_{J}}{n}\frac{V_{J}}{V_{\cJ}}}+1\quad \text{and}\quad K_{J}\ge \gamma\sqrt{\frac{n_{J}}{n}\frac{V_{J}}{V_{\cJ}}}\vee 1.
\]
Since $|\cJ|\le k$, it follows from the definition \eref{def-variation} of $V_{\cJ}$ that 
\[
\sum_{J\in\cJ}K_{J}\le k+\gamma\sum_{J\in\cJ}\sqrt{\frac{n_{J}}{n}\frac{V_{J}}{V_{\cJ}}}=k+\gamma.
\]

By Lemma~\ref{lem-approxM}, for each element $J\in\cJ$ there exists a function $\overline f_{J}$ with the same monotonicity as $f$ on $J$, which is piecewise constant on a partition of $J$ into at most $K_{J}$ intervals and satisfies   
\[
\sum_{i=1}^{n}\ab{f(x_{i,n})-\overline f_{J}(x_{i,n})}\1_{J}(x_{i,n})\le \frac{n_{J}V_{J}}{K_{J}}.
\]
The function $\overline f=\sum_{J\in\cJ}\overline f_{J}\1_{J}$ therefore belongs to $\cO_{\text{pc}}(K)$ with 
$K= \sum_{J\in\cJ}K_{J}\le k+\gamma$ and it satisfies 
\[
\frac{1}{n}\sum_{i=1}^{n}\ab{f(x_{i,n})-\overline f(x_{i,n})}\le  \sum_{J\in \cJ}\frac{n_{J}V_{J}}{nK_{J}}\le \frac{\sqrt{V_{\cJ}}}{\gamma}\sum_{J\in \cJ}\sqrt{\frac{n_{J}V_{J}}{n}}= \frac{V_{\cJ}}{\gamma},
\]
which is~\eref{eq-approxcm}.

\subsection{Proof of Proposition~\ref{prop-approx-mcc}}\label{pf-prop-mcc}
There exists a partition $\cJ$ of $\cX$ into $k$  intervals (of positive lengths) such that for each $J\in\cJ$, $f$ is continuous, monotone convex-concave on the closure $[u_{J},v_{J}]$ of $J$. For $J\in\cJ$, let $W_{J}=(n_{J}V_{J}\Gamma_{J}/n)^{1/3}$.

If $W=0$ then $f$ is affine on all the intervals $J\in\cJ$, $f$ therefore belongs to $\cO_{\text{pl}}(k)$ and \eref{eq-approx-mcc} is satisfied with $\overline f=f\et$. 

Let us now consider the situation where $W>0$ and fix some $J\in\cJ$.

-- If $n_{J}=0$, we take $l_{J}$ the affine function on $[u_{J},v_{J}]$ that joints $(u_{J},f(u_{J}))$ to $(v_{J},f(v_{J}))$. Then $l_{J}$ is a 1-interpolation of $f$ on $[u_{J},v_{J}]$. 

-- If $n_{J}\ge 1$, we denote by $Q_{J}$ the probability on $J$ given by $n_{J}^{-1}\sum_{x_{i,n}\in J}\delta_{x_{i,n}}$. Since the design is regular, $\lambda(J)\le (n_{J}+1)L/n$ and for every interval $J'\subset J$, $(n_{J}Q_{J}(J')-1)L/n\le \lambda(J')$, that is $Q_{J}(J')\le 1/n_{J}+n\lambda(J')/(Ln_{J})$, hence 
\[
Q(J')\le \frac{1}{n_{J}}+\pa{1+\frac{1}{n_{J}}}\frac{n\lambda(J')}{(n_{J}+1)L}\le \frac{1}{n_{J}}+\pa{1+\frac{1}{n_{J}}}\frac{\lambda(J')}{\lambda(J)}\le w_{J}(\lambda(J'))
\]
with $w_{J}(u)=2/n_{J}+u/\lambda(J)$ for all $u\ge 0$. Applying Theorem~\ref{thm-approx-con} (more precisely \eref{eq-approx-con2}) with 
\[
K_{J}=\PES{\frac{\gamma W_{J}}{W^{1/3}}}\le\frac{\gamma W_{J}}{W^{1/3}}+1\quad \text{so that}\quad K_{J}\ge \frac{\gamma W_{J}}{W ^{1/3}}\vee 1
\]
we built a $(2K_{J})$-linear interpolation $\overline f_{J}$ of $f$ on $[u_{J},v_{J}]$ such that 
\begin{equation}\label{eq-approxmcc00}
\int_{J}\ab{f-\overline f_{J}}dQ_{J}\le \frac{n}{n_{J}}\pa{\frac{2V_{J}}{n}+\frac{n_{J}V_{J}\Gamma_{J}}{nK_{J}^{2}}}\le \frac{n}{n_{J}}\pa{\frac{2V_{J}}{n}+\frac{W_{J}W^{2/3}}{\gamma^{2}}}.
\end{equation}
We define $\overline f=\sum_{J\in\cJ}\overline f_{J}\1_{J}$. It is a $K$-linear interpolation of $f$ with 
\begin{align*}
K&\le \sum_{J\in\cJ}\pa{\1_{n_{J}=0}+2K_{J}\1_{n_{j}\ge 1}}\le \sum_{J\in\cJ}\cro{\1_{n_{J}=0}+2\pa{\frac{\gamma W_{J}}{W^{1/3}}+1}\1_{n_{j}\ge 1}}\\
&\le 2\pa{|\cJ|+\gamma}=2(k+\gamma)
\end{align*}
and by \eref{eq-approxmcc00} it satisfies 
\begin{align*}
\frac{1}{n}\sum_{i=1}^{n}\ab{f(x_{i,n})-\overline f(x_{i,n})}&\le \sum_{J\in\cJ}\cro{\frac{n_{J}}{n}\int_{J}\ab{f-\overline f}dQ_{J}}\1_{n_{J}\ge 1}\\
&\le \sum_{J\in\cJ}\pa{\frac{2V_{J}}{n}+\frac{W_{J}W^{2/3}}{\gamma^{2}}}=\frac{2V}{n}+\frac{W}{\gamma^{2}}.
\end{align*}

\subsection{Proof of Proposition~\ref{prop-VCSingle}}
Since the function $\overline \phi$ is constant on each element of the partition $\cJ$, the class $\{R_{J}=\{x\in\R^{\frm},\; \scal{x}{\overline \theta}\in J\},\; J\in\cJ\}$ partition $\R^{\frm}$ into at most $|\cJ|\le K$ regions which are delimited by parallel hyperplans. On each set $R_{J}$, $\overline f$ is constant and we denote by $c_{J}$ the value of this constant. Let $z_{1},\ldots,z_{n}$ be $n$ points in $\cX=\R^{\frm}$ with $n\ge (\frm+1)K+1$. There exists at least one region $R_{J\et}$ which contains at least $\frm+2$ points. Let $\cZ(J\et)$ be the set $\{z_{1},\ldots,z_{n}\}\cap R_{J\et}$. The class $\cA_{>}(\overline f)$ consists of the sets of form
\[
\ac{x\in\R^{\frm},\; \phi\pa{\scal{x}{\theta}}> \overline \phi\pa{\scal{x}{\overline \theta}}}
\]
where $\theta$ runs among $\sS_{\frm}$ and $\phi$ among the class $\cM$ of monotone functions on $\R$. Let us now assume by contradiction that $\cA_{>}(\overline f)$ could shatter $\cZ$. Then,  it could in particular shatter $\cZ(J\et)\subset \cZ$. Since $f\et$ takes the value $c_{J\et}$ on $\cZ(J\et)$, sets of the form
\[
\cH(\phi,\theta)=\ac{x\in\R^{\frm},\; \phi\pa{\scal{x}{\theta}}>c_{J\et}}=\ac{x\in\R^{\frm},\; \scal{x}{\theta}\in \phi^{-1}\pa{(c_{J\et},+\infty)}}
\]
with $\phi\in \cM$ and $\theta\in \sS_{\frm}$ could therefore shatter $\cZ(J\et)$. However, for  $\phi\in\cM$ and $\theta\in \sS_{\frm}$, $\cH(\phi,\theta)$ is either empty or a half-space and we know that the class of half-spaces cannot shatter more than $\frm+1$ points (see Pollard~\citeyearpar{MR762984} [Lemma 18]). This leads to a contradiction and proves that the VC-dimension of $\cA_{>}(\overline f)$ is not larger than $(\frm+1)K$. The result for the class $\cA_{<}(\overline f)$ can be obtained by arguing in the same manner.


\begin{thebibliography}{}

\bibitem[Baraud and Birg\'e, 2016]{MR3565484}
Baraud, Y. and Birg\'e, L. (2016).
\newblock Rho-estimators for shape restricted density estimation.
\newblock {\em Stochastic Process. Appl.}, 126(12):3888--3912.

\bibitem[Baraud and Chen, 2024]{MR4725162}
Baraud, Y. and Chen, J. (2024).
\newblock Robust estimation of a regression function in exponential families.
\newblock {\em J. Statist. Plann. Inference}, 233:Paper No. 106167, 25.

\bibitem[Baraud et~al., 2022]{Baraud:2022aa}
Baraud, Y., Halconruy, H., and Maillard, G. (2022).
\newblock Robust density estimation with the ${L}_1$-loss. {A}pplications to
  the estimation of a density on the line satisfying a shape constraint.
\newblock {\em to appear in Ann. Inst. H. Poincar{\'e} Probab. Statist.}

\bibitem[Baum and Katz, 1965]{MR198524}
Baum, L.~E. and Katz, M. (1965).
\newblock Convergence rates in the law of large numbers.
\newblock {\em Trans. Amer. Math. Soc.}, 120:108--123.

\bibitem[Bellec, 2018]{Bellec2018}
Bellec, P.~C. (2018).
\newblock {Sharp oracle inequalities for Least Squares estimators in shape
  restricted regression}.
\newblock {\em The Annals of Statistics}, 46(2):745 -- 780.

\bibitem[Birg{\'e} and Massart, 2001]{MR1848946}
Birg{\'e}, L. and Massart, P. (2001).
\newblock Gaussian model selection.
\newblock {\em J. Eur. Math. Soc. (JEMS)}, 3(3):203--268.

\bibitem[Chatterjee, 2014]{MR3269982}
Chatterjee, S. (2014).
\newblock A new perspective on least squares under convex constraint.
\newblock {\em Ann. Statist.}, 42(6):2340--2381.

\bibitem[Chatterjee, 2016]{Chatterjee2016}
Chatterjee, S. (2016).
\newblock {An improved global risk bound in concave regression}.
\newblock {\em Electronic Journal of Statistics}, 10(1):1608 -- 1629.

\bibitem[Chatterjee et~al., 2015]{chatterjee2015}
Chatterjee, S., Guntuboyina, A., and Sen, B. (2015).
\newblock On risk bounds in isotonic and other shape restricted regression
  problems.
\newblock {\em Ann. Statist.}, 43(4):1774--1800.

\bibitem[Chatterjee and Lafferty, 2019]{Chatterjee2019}
Chatterjee, S. and Lafferty, J. (2019).
\newblock {Adaptive risk bounds in unimodal regression}.
\newblock {\em Bernoulli}, 25(1):1 -- 25.

\bibitem[Durrett, 2019]{MR3930614}
Durrett, R. (2019).
\newblock {\em Probability---theory and examples}, volume~49 of {\em Cambridge
  Series in Statistical and Probabilistic Mathematics}.
\newblock Cambridge University Press, Cambridge.

\bibitem[Efromovich and Pinsker, 1996]{Efromovich1996}
Efromovich, S. and Pinsker, M. (1996).
\newblock {Sharp-optimal and adaptive estimation for heteroscedastic
  nonparametric regression}.
\newblock {\em Statistica Sinica}, 6:925--942.


\bibitem[Feng et~al., 2022]{S-shaped2022}
Feng, O.~Y., Chen, Y., Han, Q., Carroll, R.~J., and Samworth, R.~J. (2022).
\newblock Nonparametric, tuning-free estimation of s-shaped functions.
\newblock {\em Journal of the Royal Statistical Society Series B: Statistical
  Methodology}, 84(4):1324--1352.

\bibitem[Guntuboyina and Sen, 2018]{Guntuboyina2018}
Guntuboyina, A. and Sen, B. (2018).
\newblock {Nonparametric Shape-Restricted Regression}.
\newblock {\em Statistical Science}, 33(4):568 -- 594.


\bibitem[Koltchinskii, 2011]{Koltchinski}
Koltchinskii, V. (2011).
\newblock {\em Oracle Inequalities in Empirical Risk minimization and Sparse
  Recovery Problems}.
\newblock Lectures from the 38th Summer School on Probability Theory held in
  Saint-Flour, 2008. Springer.

\bibitem[Ledoux and Talagrand, 2011]{MR2814399}
Ledoux, M. and Talagrand, M. (2011).
\newblock {\em Probability in {B}anach spaces}.
\newblock Classics in Mathematics. Springer-Verlag, Berlin.
\newblock Isoperimetry and processes, Reprint of the 1991 edition.

\bibitem[Minami, 2020]{Minami2020}
Minami, K. (2020).
\newblock {Estimating piecewise monotone signals}.
\newblock {\em Electronic Journal of Statistics}, 14(1):1508 -- 1576.

\bibitem[Pollard, 1984]{MR762984}
Pollard, D. (1984).
\newblock {\em Convergence of stochastic processes}.
\newblock Springer Series in Statistics. Springer-Verlag, New York.

\bibitem[van~der Vaart and Wellner, 1996]{MR1385671}
van~der Vaart, A.~W. and Wellner, J.~A. (1996).
\newblock {\em Weak Convergence and Empirical Processes. With Applications to
  Statistics}.
\newblock Springer Series in Statistics. Springer-Verlag, New York.

\bibitem[Zhang, 2002]{Zhang2002}
Zhang, C.-H. (2002).
\newblock {Risk bounds in isotonic regression}.
\newblock {\em The Annals of Statistics}, 30(2):528 -- 555.

\end{thebibliography}
\bibliographystyle{apalike}

\end{document}